\documentclass[reqno,11pt]{amsart}
\usepackage{amsmath,amsfonts,amssymb,amsthm}
\usepackage{mathrsfs}
\usepackage{xcolor}

 \allowdisplaybreaks
\numberwithin{equation}{section}

\def\be{\begin{equation}}
\def\ee{\end{equation}}
\def\eps{\varepsilon}
\def\R{{\mathbb R}}
\def\C{{\mathbb C}}
\newcommand{\wh}{\widehat}
\renewcommand{\r}{\rho}
\newcommand{\lf}{\left}
\newcommand{\ri}{\right}
\def\pt{\partial}
\newcommand*\di{\mathop{}\!\mathrm{d}}
\def\bal{\begin{aligned}}
\def\eal{\end{aligned}}
\def\proofof#1{\begin{proof}[Proof of #1]}
\def\step#1#2{

\noindent{\underline{\it Step~#1.}}\emph{ #2}\\}
\usepackage{mathtools}
\mathtoolsset{showonlyrefs}
\newcommand{\cL}{\mathcal L}
\newcommand{\cT}{\mathcal T}
\DeclareMathOperator{\re}{Re}
\DeclareMathOperator{\spt}{supp}
\DeclareMathOperator{\sgn}{sgn}

\theoremstyle{plain}
\newtheorem{lemma}{Lemma}[section]
\newtheorem{prop}[lemma]{Proposition}
\newtheorem{theorem}[lemma]{Theorem}
\newtheorem{corol}[lemma]{Corollary}
\newtheorem{defin}[lemma]{Definition}
\newtheorem{remark}[lemma]{Remark}

\begin{document}

\title[Non-uniqueness for forced 1D Vlasov-Poisson]{Non-uniqueness for the forced\\ one-dimensional Vlasov-Poisson equation}

\author{Mikaela Iacobelli}
\thanks{ETH Z\"urich, Department of Mathematics, R\"amistrasse 101, 8092 Z\"urich, Switzerland.\newline Email: \textsf{mikaela.iacobelli@math.ethz.ch}}

\begin{abstract}
We prove non-uniqueness for the forced one-dimensional Vlasov-Poisson equation on the whole space. More precisely, for every finite $p$, we construct a nonnegative ion density, smooth for positive times, and a source $S\in C([0,1];L^1(\R^2))$ with $S(0)=0$, for which there exist two distinct nonnegative distributional solutions $f_\pm\in C([0,1];L^1(\R^2))$ with the same zero initial datum and with spatial densities bounded in $L^p(\R)$ uniformly in time. The result is sharp in terms of the integrability of the spatial densities: our solutions have Lipschitz electric fields, and we show that in this class uniqueness holds as soon as the spatial density is bounded.
\end{abstract}

\maketitle

\begin{center}
\textit{{In memory of Claude Bardos, with gratitude and admiration.}}
\end{center}

\vspace{0.5cm}

\section{Introduction}

\subsection{General overview}
The Vlasov-Poisson system is the classical kinetic model for a dilute, fully ionized, unmagnetized plasma, in which $f=f(t,x,v)$ describes the distribution of the electrons moving in their self-consistent electrostatic field, while the ions form a neutralizing background.

The well-posedness of this model has been studied extensively over the past decades, starting with the case of Vlasov equations with globally Lipschitz interaction forces. In that case, Dobrushin \cite{Dob} proved the existence, uniqueness, and stability of measure-valued solutions with finite first moment, and we refer to \cite{Golse16,Jabin14} for a discussion of this theory and of mean-field limits for Vlasov-type equations.

For the Vlasov-Poisson system, where the interaction force is not Lipschitz, global weak solutions on $\R^3$ were constructed in \cite{Arsenev} for nonnegative initial data in $L^1\cap L^\infty$ with finite energy. Under suitable assumptions on the initial data, global classical solutions on the whole space were obtained in one, two and three dimensions in \cite{Iordanskii,UkaiOkabe,Pfaffelmoser}, respectively.
In three dimensions, global classical solutions for sufficiently small and localized initial data were obtained earlier in \cite{BardosDegond85}, while
a different approach to global existence in three dimensions, based on the propagation of velocity moments, was developed in \cite{LionsPerthame}. Global classical solutions on the three-dimensional torus were then constructed in \cite{BattRein}.
We refer to \cite{GPIwp} for a more detailed account of the well-posedness theory for Vlasov-type equations.

Questions of uniqueness and stability for the Vlasov-Poisson system have also received much attention.
In his fundamental work \cite{Loep}, Loeper proved uniqueness for solutions with bounded spatial density by estimating the $2$-Wasserstein distance between two such solutions, using the relation between $W_2$ and the $\dot H^{-1}$ norm, and the same approach also gives a new proof of the Yudovich uniqueness result for the two-dimensional Euler equations \cite{Yud}. The bounded-density criterion was extended in \cite{MiotCMP2016} to spatial densities whose $L^p$ norms grow at most linearly in $p$, uniformly in time, while uniqueness and stability for densities in suitable Orlicz spaces were proved in \cite{HoldingMiot2018}.
Also, in one dimension, a weak-strong stability estimate in the $1$-Wasserstein distance was proved in \cite{Hau13} (we shall return to weak-strong uniqueness for the forced equation in Appendix~\ref{sec:uniqueness}).
The kinetic-Wasserstein distances introduced in \cite{Iac22}, which reflect the natural anisotropy between the position and velocity variables, also improve the classical stability estimates of Dobrushin \cite{Dob} and Loeper.
The extension to $p$-kinetic Wasserstein distances, for $1<p<\infty$, was obtained in \cite{IJ24}, and an application of this approach to the semiclassical limit from Hartree to Vlasov-Poisson is given in \cite{IL24}.
These uniqueness results require bounded spatial densities or a quantitative Osgood-type control of the growth of their $L^p$ norms, while uniqueness under weaker density assumptions remains largely open.

For more singular solutions, non-unique weak solutions involving charge concentrations were constructed for the one-component one-dimensional Vlasov-Poisson equation in \cite{MMZ94a}, while screening and the need for measure-valued solutions in the two-component problem were studied in \cite{MMZ94b}. These examples are different from the present result, which produces $L^1$-valued solutions that are continuous up to time zero, and have zero initial datum.

We also refer to \cite{Winter23} for a non-uniqueness result for stationary solutions in the presence of attractive background charges.

Several non-uniqueness constructions have recently been developed for equations of fluid dynamics.
The idea that non-uniqueness could arise from a self-similar instability was proposed for the unforced Navier-Stokes equations by Jia and \v{S}ver\'ak, who constructed forward self-similar solutions for large, scale-invariant data \cite{JS14} and proved a conditional non-uniqueness result under a spectral assumption on the linearized operator \cite{JS15}. Numerical support for this scenario was provided by Guillod and \v{S}ver\'ak \cite{GS23}. For every $2<p<\infty$, Vishik \cite{Vishik18a,Vishik18b} constructed two distinct weak solutions of the forced two-dimensional Euler equations with the same initial vorticity in $L^1\cap L^p$ and the same force, and a self-contained treatment, including a zero-initial-vorticity variant, is given in \cite{ABCDLGJK}. Albritton, Bru\'e and Colombo \cite{ABC22} constructed two distinct Leray solutions of the forced Navier-Stokes equations with zero initial velocity and identical body force. The same strategy has subsequently been applied to several other equations of fluid dynamics, including further Navier-Stokes settings \cite{ABCglue,AC23}, the forced two-dimensional Euler equations \cite{CFMS-Euler,DM25}, and the forced surface quasi-geostrophic equation \cite{CFMS-SQG}.

To explain the idea behind these results, we introduce the self-similar time $\tau=\log t$ and observe that, if the equation linearized in similarity variables around a stationary profile has an unstable eigenvalue $\lambda$ with $\re\lambda>0$, then the unstable mode $e^{\lambda\tau}\phi=t^\lambda\phi$ vanishes as $t\to0^+$.
One then constructs nonlinear solutions from this mode and checks that its decay compensates for the rescaling, so that the solutions have the same initial trace in physical variables. An additive source can make the chosen background stationary in similarity variables without affecting the linearization, so one first needs an unstable eigenvalue with a localized eigenfunction.

For Vlasov-Poisson, our choice of the two-stream instability and its scale is inspired by our work on quasineutral limits (see \cite[Appendix~B]{GPIstab}).
The classical unstable modes around a spatially homogeneous equilibrium are plane waves, so neither they nor the background are integrable over the whole space. We construct a localized unstable mode by reducing the kinetic eigenvalue problem to a compact operator acting on the electric field.
Concentrating the background on a scale $\eps$ and choosing an amplitude of order $\eps^{-2}$, as in the Debye scaling, produces a growth rate of order $\eps^{-1}$ and allows us to retain the instability when restoring the drift generated by the self-similar change of position and velocity variables.

Unlike the vorticity in the Euler equation, the kinetic distribution must remain nonnegative, so to construct the two solution branches we need to bound the eigenfunction pointwise by a multiple of a positive integrable background.
We obtain this bound by adding a positive tail adapted to the kinetic characteristics and using the large growth rate to ensure that the characteristic integral in the pointwise estimate converges.

{The two-stream instability is a particular Penrose instability associated with velocity profiles having two sufficiently separated peaks \cite{Penrose60}.} Nonlinear instability of Penrose-unstable double-humped equilibria was proved in \cite{GuoStraussDouble}, while spatially inhomogeneous periodic BGK equilibria, BGK solitary waves, and collisionless shocks in relativistic plasma models were studied in \cite{GuoStraussPeriodic,GuoStraussSolitary}. See \cite{Baradat18,PankavichAllen,HKN16} for further instability results in the Vlasov setting { (see also \cite{BardosNouri12,BardosBesse13} for the related Vlasov-Dirac-Benney equation)}.
{In the physics literature, beam-plasma oscillations and current-carrying plasmas were studied in \cite{BohmGross49,Buneman58}, respectively, and we also refer to \cite{ONeil65} for particle trapping, to \cite{RobertsBerk67} for the nonlinear evolution of the two-stream instability, and to \cite{BGK57} for the construction of BGK waves.}

In the quasineutral scaling, $\eps$ represents the ratio between the Debye length and the observation scale, and structures at the Debye scale evolve at a plasma frequency of order $\eps^{-1}$ in macroscopic time. The limit for data with uniform analytic regularity was established in \cite{Grenier96}, while the two-stream instability and the associated fast scale were identified in \cite{Grenier99} as an obstruction to quasineutral convergence. A rigorous failure result for Penrose-unstable homogeneous velocity profiles was obtained in \cite{HKH15}, while stability under the sharp Penrose condition was proved in \cite{HKR16}. In \cite{HKI1,HKI2,Iac22}, this dichotomy was quantified through Wasserstein-type stability estimates, with admissible perturbations exponentially small in $\eps$.
Analogous results for the ionic model were obtained in \cite{GPIJMPA,GPIstab}; see \cite{Geom} for a recent overview.

\subsection{The main result}
Throughout the paper, for a phase-space function $g$ we write $\rho_g(x):=\int_\R g(x,v)\,\di v$ for its spatial density, using the same notation in the rescaled variables. In one space dimension, the forced Vlasov-Poisson equation reads
\be\label{eq:vp}
 \lf\{
 \begin{array}{l}
 \pt_t f+v\,\pt_x f+E\,\pt_v f=S\\[1mm]
 \pt_xE=\r_f-\r_i
 \end{array}
 \ri.,
\ee
where $\r_i=\r_i(t,x)$ is a prescribed, possibly time-dependent, nonnegative ion density and $S$ is an additive kinetic source. We take $E=E[\r_f-\r_i]$, where for $q\in L^1(\R)$ we define
\be\label{eq:field}
 E[q](x):=\frac12\int_\R \sgn(x-y)\,q(y)\,\di y,
 \qquad \pt_xE[q]=q.
\ee
Solutions of \eqref{eq:vp} are understood in the sense of distributions.

Our main result is the following.

\begin{theorem}[Forced non-uniqueness from zero initial data]
\label{thm:main}
For every finite $p\in[1,\infty)$ there exist a prescribed nonnegative ion density
\[
 \r_i\in C^\infty((0,1]\times\R),
\]
a source
\[
 S\in C([0,1];L^1(\R^2)),
 \qquad S(0)=0,
\]
and two distinct nonnegative distributional solutions $f_+,f_-$ of \eqref{eq:vp} such that
\be\label{eq:initial}
 f_\pm\in C([0,1];L^1(\R^2)),
 \qquad f_+(0)=f_-(0)=0,\qquad \r_{f_\pm}\in L^\infty((0,1);L^p(\R)).
\ee
Also, for every $t\in(0,1]$, the solutions have a finite first velocity moment and satisfy
\[
 \int_\R\bigl(\r_{f_\pm}(t,x)-\r_i(t,x)\bigr)\,\di x=0.
\]
\end{theorem}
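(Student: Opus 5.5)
We follow the Vishik/Albritton--Bru\'e--Colombo scheme, adapted to kinetic self-similar variables. The self-similar scaling of \eqref{eq:vp} in one dimension will be $x=t^{a}\xi$, $v=t^{a-1}\nu$, with $f(t,x,v)=t^{-\gamma}F(\tau,\xi,\nu)$ and $\tau=\log t$. The exponents are chosen so that $E\,\pt_v f$ scales like $\pt_t f$: since $E\sim\int\r_f$, this forces $E\sim t^{a-2}$. We choose $a$ so that $\|\r_f(t)\|_{L^p}$ stays bounded as $t\to0$ while $\|f(t)\|_{L^1}\to0$. Explicitly, $\r_f=t^{-\gamma+a-1}R(\xi)$, so
\[
\|\r_f\|_{L^p}=t^{-\gamma+a-1+a/p}\|R\|_{L^p},
\]
and $L^1$-mass in phase space is $t^{-\gamma+2a-1}$. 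The Poisson balance $\pt_x E=\r$ gives $t^{a-2-a}=t^{-\gamma+a-1}$, hence $\gamma=a+1$. Then
\[
\|\r_f\|_{L^p}\sim t^{-2+a/p},\qquad \|f\|_{L^1}\sim t^{a-2},
\]
so we take $a>2p$. A mass-zero perturbation growing like $e^{\lambda\tau}$ with $\re\lambda$ large then makes all the required norms vanish at $t=0$. In similarity variables the equation becomes autonomous:
\[
\pt_\tau F-\gamma F-a\xi\pt_\xi F-(a-1)\nu\pt_\nu F+\nu\pt_\xi F+E\pt_\nu F=\text{(source)}.
\]

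We pick a background $\bar F\ge0$, concentrated on scale $\eps$ with a two-stream velocity profile of amplitude $\eps^{-2}$, together with an ion profile $\bar\r_i$ equal to $\r_{\bar F}$. The background field then vanishes, and neutrality holds. The ion density is $\r_i(t,x)=t^{-\gamma+a-1}\bar\r_i(x/t^a)$, and the source $S$ is defined as whatever makes $\bar F$ stationary; it is $O(t^{a-2})$ in $L^1$, so $S(0)=0$.

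\textbf{Step 1: localized unstable eigenvalue.} We solve the eigenvalue problem for the linearization $L G=\cdots+E[\r_G]\pt_\nu\bar F$. We invert the transport part $(\lambda-T)^{-1}$ along characteristics, which is explicit for $\re\lambda$ large, and write the equation as $E=K_\lambda E$. Here $K_\lambda$ is compact on a weighted space of fields because of the spatial localization. At $\eps\to0$ the drift terms are $O(\eps)$ relative to the fast Debye scale. The Penrose two-stream dispersion relation gives a root $\lambda_0/\eps$. A Rouché/Gohberg--Sigal argument on $\det(I-K_\lambda)$ then persists this root to an eigenvalue $\lambda_\eps$ with $\re\lambda_\eps\sim\eps^{-1}$, which exceeds any prescribed threshold. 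We take the real part of the mode, or a conjugate pair.

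\textbf{Step 2: positivity.} We show $|G|\le C\bar F$ pointwise, after enlarging $\bar F$ by a positive tail adapted to the characteristics. The representation $G=\int_0^\infty e^{-\lambda s}(\ldots)(\text{flow}_s)\,ds$ converges against the tail's growth because $\re\lambda$ is large.

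\textbf{Step 3: nonlinear solutions.} We seek $F_\pm=\bar F\pm e^{\lambda\tau}G+F^{\rm cor}$, with $F^{\rm cor}=O(e^{2\re\lambda\tau})$ as $\tau\to-\infty$. This comes from a fixed point in a weighted-in-$\tau$ space, using semigroup bounds $\|e^{\tau L}\|\lesssim e^{(\re\lambda+\delta)\tau}$ on the unstable-free complement. Here we choose $\lambda$ to be the most unstable eigenvalue, which we obtain by compactness of the spectrum above the essential bound. Since the field $E[\r_G]$ is Lipschitz, the quadratic term $E\pt_\nu F$ closes in a space of Lipschitz-controlled functions. Positivity follows from $|F^\pm-\bar F|\le C e^{\re\lambda\tau}\bar F\le\bar F$ for $\tau\le\tau_0$; after rescaling time we take $\tau_0=0$. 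Distinctness is immediate from the sign of the leading term. We then return to physical variables. The bounds computed above give continuity in $L^1$ at $t=0$ with $f_\pm(0)=0$ and uniformly bounded $\|\r_{f_\pm}\|_{L^p}$. Neutrality holds because $G$ and $F^{\rm cor}$ have zero mass: the source is chosen mass-free, or the perturbation lies in the mass-zero subspace. The first moment is finite by the tail decay.

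The main obstacle is Steps 1--2 combined: producing a genuinely localized eigenfunction, because plane-wave modes are not integrable, that is also pointwise dominated by a positive integrable background in the presence of the self-similar drift. Nonlinear closure requires controlling $\pt_\nu F$, which we handle by propagating the same weighted bounds on derivatives.
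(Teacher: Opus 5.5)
Your scaling, the $\eps$-concentrated two-stream background of amplitude $\eps^{-2}$, the reduction to a compact field operator, and the characteristic-adapted positive tail all match the paper. The genuine gap is Step~3.

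\textbf{Step 3.} As described, your fixed point does not close. The quadratic term $E[\rho_{F'}]\pt_\nu F'$, with $F'=F-\bar F$, costs one velocity derivative, and the transport semigroup $e^{\tau L}$ has no smoothing. A Duhamel iteration in a space that controls $\pt_\nu F'$ therefore needs $\pt_\nu^2F'$ to estimate the nonlinearity. Removing this loss means treating $E[\rho_{F'}]\pt_\nu$ as part of a nonlinear transport, and bounds on $e^{\tau L}$ no longer apply to that.

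Two further ingredients are missing. First, the bound $\|e^{\tau L}\|\lesssim e^{(\re\lambda+\delta)\tau}$ presupposes an essential growth bound below $\re\lambda$ and a maximal unstable eigenvalue for the kinetic linearization in such a derivative space. None of this is established, and even choosing the space is delicate, since $h\mapsto\rho_h=\int h\,\di\nu$ is not bounded on unweighted $L^2$. Second, positivity needs $|F^{\rm cor}|\lesssim e^{2\re\lambda\tau}\bar F$ pointwise, so the whole scheme would have to run in the norm $\|F/\bar F\|_{L^\infty}$ together with derivative bounds.

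The missing idea, which makes all of this unnecessary, is Golovkin's identity. The theorem only asks for \emph{one} source shared by both solutions, and that source may depend on the construction. Write $\mathcal A$ for the right-hand side operator of your similarity equation. Take a real eigenfunction $h$ with $\cL_{\bar F}h=\lambda h$, and set $r=\delta e^{\lambda\tau}h$. Then $\mathcal A(\bar F\pm r)=\mathcal A(\bar F)\pm\cL_{\bar F}r-E[\rho_r]\pt_\nu r$ and $\pt_\tau r=\cL_{\bar F}r$. Since the quadratic term is even in $r$, both $\bar F\pm r$ are \emph{exact} solutions with the common source $-\mathcal A(\bar F)+\delta^2e^{2\lambda\tau}E[\rho_h]\pt_\nu h$.

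This removes the need for a correction, a spectral gap, or maximality of $\lambda$. Positivity is immediate from $|h|\le C\bar F$. The only derivative control needed is linear: $E[\rho_h]\in L^\infty$ and $\pt_\nu h\in L^1$. The paper obtains these by differentiating the characteristic formula for the mode.

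\textbf{Steps 1--2.} The homogeneous Penrose relation does not by itself give an eigenvalue for a compactly supported spatial profile $n$ at $\eps=0$. So there is no root for your Rouch\'e/Gohberg--Sigal argument to continue. The paper proceeds as follows:
\begin{itemize}
\item it symmetrizes the field operator to $\sqrt n\,\cT_\Lambda\sqrt n$, which is compact and self-adjoint because $\mu$ is even;
\item it shows the quadratic form is positive on a high-frequency wave packet, since $m_\Lambda(k)>0$ for large $|k|$ when $u>\gamma$;
\item it makes the resulting positive eigenvalue simple;
\item it fixes $\Lambda_0$ and rescales the amplitude of $n$ so that the field eigenvalue equals $1$.
\end{itemize}
No root search in $\lambda$ is needed.

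The same amplitude renormalization, via the implicit function theorem in the coefficient $\alpha_{\eps,\theta}$, keeps the eigenvalue at $\Lambda_0/\eps$ after the tail $\theta\Omega_\eps$ is added. Your Step~2 does not address that adding the tail changes the linearized operator and hence the mode. The tail must also be nonincreasing along the flow in your representation $G=\int_0^\infty e^{-\lambda s}(\ldots)(\mathrm{flow}_s)\,\di s$ and satisfy $|\pt_\nu\Omega|\lesssim\Omega$.

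\textbf{Source exponent.} In your notation the source scales as $\|S(t)\|_{L^1}\sim t^{a-3}$, not $t^{a-2}$, because $-\mathcal A(\bar F)$ does not decay in $\tau$. Hence $a>2p$ alone does not give $S(0)=0$ when $p\le 3/2$; you need $a>\max\{3,2p\}$.
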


{
For a common source $S\in L^1((0,T)\times\R^2)$ and a prescribed nonnegative ion density $\rho_i\in L^1((0,T)\times\R)$, Theorem~\ref{thm:uniqueness} gives uniqueness among nonnegative distributional solutions with the same initial datum, continuous in $L^1(\R^2,(1+|x|+|v|)\,\di x\,\di v)$, provided one solution satisfies
\[
 E_1\in L^1(0,T;W^{1,\infty}(\R)),
 \qquad \int_0^T t\|\rho_{f_1}(t)\|_{L^\infty}\,\di t<\infty.
\]
The factor $t$ comes from the second-order structure of the kinetic characteristics. In particular, this criterion allows densities which are unbounded as $t\to0^+$.

The solutions of Theorem~\ref{thm:main} satisfy the weighted continuity and field regularity assumptions above, while Proposition~\ref{prop:endpoint} shows that, for some $m>0$,
\[
 \lim_{t\to0^+}t^2\|\rho_{f_\pm}(t)\|_{L^\infty}=m,
 \qquad t^2\|\rho_i(t)\|_{L^\infty}=m.
\]
The time weight is therefore sharp, since the density integral diverges logarithmically, but becomes finite if $t$ is replaced by $t^{1+\eta}$ for any $\eta>0$.
The same examples show that weak $L^1$ cannot replace $L^1$ in the condition on $t\|\rho_{f_1}(t)\|_{L^\infty}$ without additional assumptions.
{
If $E_1=E[\rho_{f_1}-\rho_i]\in L^1(0,T;W^{1,\infty}(\R))$, then by Corollary~\ref{cor:ions} we have
\[
 \int_0^T t\|\rho_{f_1}(t)\|_{L^\infty}\,\di t<\infty
 \quad\Longleftrightarrow\quad
 \int_0^T t\|\rho_i(t)\|_{L^\infty}\,\di t<\infty.
\]
}
For the ion density of our examples this integral diverges, so the optimality concerns a class in which the prescribed ions may vary, rather than a transition between uniqueness and non-uniqueness for one fixed background. Within each non-unique pair, both the ions and the source are identical.

}

{The passage from the unstable mode to two solutions with the same source relies on the innovative use of Golovkin's identity \cite{Gol} by Dolce and Mescolini \cite{DM25}.}
{In suitable self-similar variables}, we construct a positive background and a localized unstable mode bounded pointwise by a multiple of that background, so adding and subtracting a sufficiently small multiple of the growing mode gives two nonnegative functions.
Since the mode satisfies the linearized equation, the linear terms cancel against the time derivative, while the quadratic term is unchanged when the sign of the perturbation is reversed, and we can therefore choose one source for which both functions solve the equation.

\smallskip

The paper is structured as follows:
Section~\ref{sec:similarity} introduces the self-similar equation and the spatial rescaling, and Section~\ref{sec:localized} constructs the localized unstable mode. We restore the similarity drift in Section~\ref{sec:drift} and add a positive tail to the background in Section~\ref{sec:tails}, before proving Theorem~\ref{thm:main} in Section~\ref{sec:proof}.
Appendix~\ref{sec:uniqueness} contains the weak-strong uniqueness criterion and the proof of its temporal sharpness.

 \subsection{AI declaration} The mathematical ideas and
proofs in this paper were developed by the author.
AI tools have been used to improve the presentation and exposition.

\section{Self-similar formulation and spatial concentration}
\label{sec:similarity}

We first derive the equation in self-similar variables and its linearization, then rescale a spatially concentrated background.

\subsection{The self-similar equation}

We rescale position and velocity by setting
\be\label{eq:similarity}
 f(t,x,v)=t^aF(\tau,\xi,w),
 \qquad
 \tau=\log t,
 \qquad
 \xi=x\,t^{-b},
 \qquad
 w=v\,t^{-c}.
\ee
We fix a sufficiently regular nonnegative profile \(\overline F=\overline F(\xi,w)\), independent of \(\tau\), and choose the prescribed ion density as
\be\label{eq:ions}
 \r_i(t,x)=t^{a+c}\rho_{\overline F}(\xi).
\ee
By \eqref{eq:similarity}, we have \(\r_f=t^{a+c}\r_F\), so \eqref{eq:field} gives
\[
E(t,x)=t^{a+b+c}E[\rho_F(\tau,\cdot)-\rho_{\overline F}](\xi).
\]
Applying the chain rule, we obtain
\begin{align*}
 \pt_t f
 &=t^{a-1}\bigl(\pt_\tau F-b\xi\pt_\xi F-cw\pt_wF+aF\bigr),\\
 v\pt_xf&=t^{a+c-b}w\pt_\xi F,\\
 E\pt_vf&=t^{2a+b}E[\rho_F-\rho_{\overline F}]\pt_wF,
\end{align*}
so, balancing the three powers of \(t\), we find
\[
a-1=a+c-b=2a+b,
\]
and hence
\be\label{eq:scaling}
 b=c+1,
 \qquad
 a=-2-c,
\ee
which leaves $b$ free; since $a+c=-2$, the spatial density satisfies, for $1\leq p<\infty$,
\be\label{eq:rho}
 \|\rho_f(t)\|_{L^p}
 =t^{-2+b/p}\|\rho_F(\log t)\|_{L^p}.
\ee
Thus, if the rescaled density stays bounded in $L^p$ as $\tau\to-\infty$, choosing $b\geq2p$ keeps the physical density bounded in $L^p$ near $t=0$. The stronger choice $b>\max\{3,2p\}$ in Section~\ref{sec:proof} will also ensure that the source tends to zero in $L^1$.

Under \eqref{eq:similarity} and \eqref{eq:scaling}, the three terms on the left-hand side of the Vlasov-Poisson equation have the common factor $t^{a-1}$, so we define the source in self-similar variables by
\[
 \mathscr S(\tau,\xi,w)
 :=t^{1-a}S(t,t^b\xi,t^cw),
 \qquad t=e^\tau,
\]
and, after dividing the equation by $t^{a-1}$, we can write
\[
 \pt_\tau F=\mathcal A_{\overline F}(F)+\mathscr S,
\]
where
\be\label{eq:AF}
 \mathcal A_{\overline F}(F)
 :=-w\,\pt_\xi F-E[\rho_F-\rho_{\overline F}]\,\pt_wF
   +b\,\xi\,\pt_\xi F+c\,w\,\pt_wF-aF.
\ee
By our choice of the ion density, the electric field vanishes at $F=\overline F$, so
\[
 \mathcal A_{\overline F}(\overline F)
 =-w\,\pt_\xi\overline F+b\,\xi\,\pt_\xi\overline F
   +c\,w\,\pt_w\overline F-a\overline F.
\]

To compute the linearization, we keep the ion density $\rho_{\overline F}$ fixed and write $F=\overline F+h$, so
\[
 \rho_{\overline F+h}-\rho_{\overline F}=\rho_h,
 \qquad
 -E[\rho_h]\,\pt_w(\overline F+h)
 =-E[\rho_h]\,\pt_w\overline F-E[\rho_h]\,\pt_wh.
\]
Since the second term on the right-hand side is quadratic in $h$, the linearized operator around $\overline F$ is
\be\label{eq:linearization}
 \cL_{\overline F}h
 :=D\mathcal A_{\overline F}(\overline F)h
 =-w\,\pt_\xi h+b\,\xi\,\pt_\xi h+c\,w\,\pt_wh-ah
   -E[\rho_h]\,\pt_w\overline F,
\ee
and, with $\mathscr S=-\mathcal A_{\overline F}(\overline F)$, the perturbation satisfies
\[
 \pt_\tau h=\cL_{\overline F}h-E[\rho_h]\,\pt_wh.
\]
\subsection{{Spatial rescaling}}
\label{sec:debye}

The two branches will be obtained by adding and subtracting a solution of the linearized equation, so that the quadratic term is the same for both and can be included in their common source.
A positive eigenvalue gives the time factor $e^{\lambda\tau}=t^\lambda$, so the perturbations decay in the rescaled variables as $t\to0^+$. We therefore want to construct a nonnegative background $\overline F$ and a nonzero eigenfunction $h$ satisfying
\[
 \cL_{\overline F}h=\lambda h,
 \qquad \lambda>0.
\]
We first consider backgrounds of the form
\be\label{eq:background}
 \overline F_\eps(\xi,w)
 =\eps^{-2}\,n(\xi/\eps)\,\mu(w),
\ee
where $n\geq0$ is a spatial profile, $\mu\geq0$ is a velocity distribution with unit mass, and $0<\eps<1$. Here $\eps$ is a spatial concentration parameter, which enters only through the chosen background and which we choose sufficiently small and then fix. The amplitude $\eps^{-2}$ makes the transport and linearized field terms have the same order after rescaling space, a choice motivated by the Debye scaling used in quasineutral limits (see for instance \cite{Grenier96,HKH15,HKR16,HKI2,GPIstab,Geom}), in which a fixed velocity scale and a homogeneous density of order $\eps^{-2}$ correspond to a characteristic length of order $\eps$.
Since $\mu$ has unit mass, the spatial density of the background is
\[
 \rho_{\overline F_\eps}(\xi)
 =\int_\R\overline F_\eps(\xi,w)\,\di w
 =\eps^{-2}n(\xi/\eps)\int_\R\mu(w)\,\di w
 =\eps^{-2}n(\xi/\eps).
\]
As in \eqref{eq:ions}, we prescribe this function as the self-similar ion density, so that the electric field vanishes at $F=\overline F_\eps$.

To work at a fixed spatial scale, we introduce the variable $y=\xi/\eps$ and define the spatial rescaling operator by
\be\label{eq:rescaling}
 (\mathcal R_\eps H)(\xi,w):=H(\xi/\eps,w),
 \qquad h=\mathcal R_\eps H.
\ee
Note that, since the rescaling acts only on the spatial variable, the densities associated with $h$ and $H$ satisfy
\be\label{eq:rhoeps}
 \rho_h(\eps y)=\rho_H(y).
\ee
For a smooth compactly supported $H$, the change of variables gives
\be\label{eq:derivatives}
 \pt_\xi h(\eps y,w)=\eps^{-1}\pt_yH(y,w),
 \qquad \xi\pt_\xi h(\eps y,w)=y\pt_yH(y,w),\qquad
 \pt_wh(\eps y,w)=\pt_wH(y,w).
\ee
Also, by \eqref{eq:field}, the electric field scales as
\be\label{eq:Eeps}
 E[\rho_h](\eps y)=\eps\,E[\rho_H](y),
\ee
while
\be\label{eq:Fbar}
 \pt_w\overline F_\eps(\eps y,w)=\eps^{-2}n(y)\mu'(w),
\ee
so the free transport and the field term are both of order $\eps^{-1}$, whereas the terms generated by the self-similar change of variables,
\[
 b\,y\,\pt_yH+c\,w\,\pt_wH-aH,
\]
remain of order one. We therefore separate these contributions by writing
\be\label{eq:Lloc}
 \cL_{n,\mu}^{\rm loc}H
 :=-w\,\pt_yH-n(y)\,E[\rho_H](y)\,\mu'(w),\qquad
 \mathcal D_{b,c}H
 :=b\,y\,\pt_yH+c\,w\,\pt_wH-aH.
\ee
The eigenfunction will initially be constructed in $L^2$, so we need the reduction to remain valid without assuming that it is smooth. To control its spatial density, we use the mixed norm
\[
 \|H\|_{L^1_wL^2_y}:=\int_\R\|H(\cdot,w)\|_{L^2_y}\,\di w,
\]
so the condition $H\in L^1_wL^2_y$ used in the spectral argument below gives $\rho_H\in L^2(\R)$. For $q\in L^2(\R)$ but not necessarily integrable, we write $E[q]$ for its unique $L^2$ primitive whenever it exists.

\begin{lemma}
\label{lem:debye}
Let $0<\eps<1$.
Assume $n\in C_c^\infty(\R)$, $n\geq0$, and $\mu\in C^\infty(\R)\cap L^1(\R)$, $\mu\geq0$, with
\[
 \int_\R \mu(w)\,\di w=1,
 \qquad \mu'\in L^2(\R).
\]
Let $\overline F_\eps$ be given by \eqref{eq:background} and $\cL_{\overline F_\eps}$ by \eqref{eq:linearization}. Suppose that
\[
 H\in {L^2_{y,w}}\cap L^1_wL^2_y,
 \qquad
 E[\rho_H]\in L^2(\R),
\]
and that the distribution
\[
 \bigl(\cL_{n,\mu}^{\rm loc}+\eps\mathcal D_{b,c}\bigr)H
\]
belongs to ${L^2_{y,w}}$. Then, for $h=\mathcal R_\eps H$ with $\mathcal R_\eps$ as in \eqref{eq:rescaling}, $\cL_{\overline F_\eps}h$ also belongs to ${L^2_{\xi,w}}$ and
\be\label{eq:conjugacy}
 \mathcal R_\eps^{-1}\cL_{\overline F_\eps}\mathcal R_\eps H
 =\eps^{-1}\cL_{n,\mu}^{\rm loc}H+\mathcal D_{b,c}H.
\ee
In particular, for every $\lambda\in\C$, $h=\mathcal R_\eps H$ satisfies $\cL_{\overline F_\eps}h=\lambda h$ if and only if
\be\label{eq:scaled}
 \bigl(\cL_{n,\mu}^{\rm loc}+\eps\,\mathcal D_{b,c}\bigr)H
 =\eps\,\lambda H.
\ee
\end{lemma}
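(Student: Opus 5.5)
The identity \eqref{eq:conjugacy} is a change of variables, and the only real task is to make it rigorous when $H$ is merely in $L^2$. I will prove it in the sense of distributions on $\R^2$. Once this is done, the $L^2$ statement and the eigenvalue equivalence follow at once.

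\emph{Formal computation.} For smooth compactly supported $H$, I evaluate each term of $\cL_{\overline F_\eps}h$ at $(\eps y,w)$ using \eqref{eq:derivatives}, \eqref{eq:rhoeps}, \eqref{eq:Eeps} and \eqref{eq:Fbar}. This gives
\[
-w\pt_\xi h\mapsto -\eps^{-1}w\pt_yH,\qquad b\xi\pt_\xi h\mapsto by\pt_yH,\qquad cw\pt_wh-ah\mapsto cw\pt_wH-aH,
\]
and
\[
-E[\rho_h]\pt_w\overline F_\eps\mapsto -\eps E[\rho_H](y)\,\eps^{-2}n(y)\mu'(w)=-\eps^{-1}n(y)E[\rho_H](y)\mu'(w).
\]
Summing these terms gives exactly $\eps^{-1}\cL^{\rm loc}_{n,\mu}H+\mathcal D_{b,c}H$.

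\emph{Distributional version.} Next I treat general $H$ as in the hypotheses. The term $E[\rho_H]$ is understood as the $L^2$ primitive, and I must check that $E[\rho_h]$ is the corresponding rescaled $L^2$ primitive. The function $\eps E[\rho_H](\xi/\eps)$ lies in $L^2$ and has derivative $\rho_H(\xi/\eps)=\rho_h(\xi)$, where the derivative is taken in $\mathcal D'$ and \eqref{eq:rhoeps} holds a.e. since $H\in L^1_wL^2_y$. By uniqueness of $L^2$ primitives (two of them differ by a constant, and a constant in $L^2$ is zero), this function is $E[\rho_h]$. The field term is a product of an $L^2$ function of $y$ with $n\mu'$, where $n$ is bounded and compactly supported and $\mu'\in L^2$. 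It is therefore in $L^2_{y,w}$, and in particular a distribution.

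For the transport and dilation terms, I pair with a test function $\varphi(\xi,w)$. I substitute $\xi=\eps y$ in the pairing, so that $\varphi(\eps y,w)=:\psi(y,w)$ is again a test function, and I move the derivatives onto the test function. The coefficients $w$, $\xi$ and $w$ are smooth, so the products such as $w\pt_\xi h$ make sense as distributions, with $\pt_\xi(\xi\varphi)$ and similar expressions computed by Leibniz. The Jacobian $\eps$ from the substitution and the relation $\pt_\xi\varphi=\eps^{-1}\pt_y\psi$ then reproduce the smooth computation term by term. Hence $\langle \cL_{\overline F_\eps}h,\varphi\rangle=\eps\langle \eps^{-1}\cL^{\rm loc}_{n,\mu}H+\mathcal D_{b,c}H,\psi\rangle$, which is \eqref{eq:conjugacy} in $\mathcal D'$.

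\emph{Conclusions.} The right-hand side of \eqref{eq:conjugacy} equals $\eps^{-1}(\cL^{\rm loc}_{n,\mu}+\eps\mathcal D_{b,c})H$, which lies in $L^2_{y,w}$ by hypothesis. Since $\mathcal R_\eps$ maps $L^2_{y,w}$ to $L^2_{\xi,w}$ (it multiplies the norm by $\eps^{1/2}$), it follows that $\cL_{\overline F_\eps}h\in L^2_{\xi,w}$. Finally, $\cL_{\overline F_\eps}h=\lambda h$ holds if and only if $\mathcal R_\eps^{-1}\cL_{\overline F_\eps}\mathcal R_\eps H=\lambda H$, because $\mathcal R_\eps$ is a bijection. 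Multiplying this equation by $\eps$ and using \eqref{eq:conjugacy} gives \eqref{eq:scaled}.

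The only delicate point is the identification of $E[\rho_h]$ for nonintegrable $\rho_h$, which is handled by the uniqueness of $L^2$ primitives.
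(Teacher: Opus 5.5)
Your proposal is correct and follows essentially the same route as the paper: you transfer the derivative identities \eqref{eq:derivatives} to distributions by duality, identify $E[\rho_h](\xi)=\eps\,E[\rho_H](\xi/\eps)$ through uniqueness of the $L^2$ primitive, and then obtain the $L^2$ membership and the eigenvalue equivalence from the norm $\eps^{1/2}$ of $\mathcal R_\eps$ and multiplication by $\eps$. Your explicit pairing computation simply spells out the duality step that the paper states in one line.
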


\begin{proof}
The derivative identities \eqref{eq:derivatives} remain valid for distributions by duality.
Moreover, by \eqref{eq:rhoeps},
\[
 \pt_\xi\lf(\eps\,E[\rho_H](\xi/\eps)\ri)
 =\rho_H(\xi/\eps)=\rho_h(\xi),
\]
and since $\eps\,E[\rho_H](\xi/\eps)$ belongs to $L^2(\R)$, uniqueness of the $L^2$ primitive gives
\[
 E[\rho_h](\xi)=\eps\,E[\rho_H](\xi/\eps).
\]
Substituting this field identity and \eqref{eq:derivatives} into \eqref{eq:linearization}, and using \eqref{eq:Fbar}, we obtain
\[
 \cL_{\overline F_\eps}\mathcal R_\eps H
 =\eps^{-1}\mathcal R_\eps
 \bigl(\cL_{n,\mu}^{\rm loc}+\eps\mathcal D_{b,c}\bigr)H.
\]
Since $\mathcal R_\eps$ maps $L^2_{y,w}$ into $L^2_{\xi,w}$ with norm $\eps^{1/2}$, the assumption in the statement ensures that the right-hand side belongs to ${L^2_{\xi,w}}$, so applying $\mathcal R_\eps^{-1}$ gives \eqref{eq:conjugacy}, and the equivalence of the two eigenvalue problems follows by multiplying the conjugated equation by $\eps$.
\end{proof}

\begin{remark}
In \cite{HKH15,GPIstab}, the variables $(t/\eps,x/\eps)$ rescale the linearization around a homogeneous equilibrium for the Vlasov-Poisson equation and the equation with massless electrons, respectively, whereas here the rescaled profile $n(y)\mu(w)$ is localized, and the additional similarity drift appears as the order-$\eps$ perturbation in \eqref{eq:scaled}.
\end{remark}

\section{Construction of a localized unstable mode}
\label{sec:localized}

We construct an unstable mode by choosing the velocity profile $\mu$ and the spatial profile $n$ in the localized operator $\cL_{n,\mu}^{\rm loc}$ defined in \eqref{eq:Lloc}.

\subsection{The Birman-Schwinger reduction}
\label{sec:reduction}

Classical Penrose instabilities are obtained by linearizing around a spatially homogeneous profile and considering a single Fourier mode in the space variable. On $\R$, however, neither the background nor the corresponding mode is integrable, and a spatial cut-off cannot be used since it does not preserve the eigenvalue equation. We therefore introduce a spatial profile $n(y)$ and reduce the eigenvalue problem to a compact operator acting on the electric field.

More precisely, given $\Lambda>0$, we look for an eigenfunction $H$ of $\cL_{n,\mu}^{\rm loc}$ with eigenvalue $\Lambda$, and we denote its electric field by
\[
 E(y)=E[\rho_H](y),
\]
so that the eigenvalue equation can be written as
\be\label{eq:eigenvalue}
 (\Lambda+w\,\pt_y)H=-n(y)E(y)\mu'(w),
 \qquad
 E'=\rho_H.
\ee
We use the Fourier convention
\[
 \wh g(k):=\int_\R e^{-iky}g(y)\,\di y,
 \qquad
 g(y)=\frac1{2\pi}\int_\R e^{iky}\wh g(k)\,\di k,
\]
and, taking the Fourier transform in $y$, we obtain
\be\label{eq:Hhat}
 \wh H(k,w)
 =-\frac{\mu'(w)}{\Lambda+ikw}\,\wh{nE}(k).
\ee
For $k\ne0$, we then integrate by parts and find
\be\label{eq:parts}
 ik\,\wh E(k)=\wh{\rho_H}(k)
 =-\wh{nE}(k)\int_\R\frac{\mu'(w)}{\Lambda+ikw}\,\di w
 =-ik\,\wh{nE}(k)\int_\R\frac{\mu(w)}{(\Lambda+ikw)^2}\,\di w,
\ee
so division by $ik$ gives
\be\label{eq:multiplier}
 \wh E(k)=m_\Lambda(k)\,\wh{nE}(k),
 \qquad
 m_\Lambda(k)
 :=-\int_\R\frac{\mu(w)}{(\Lambda+ikw)^2}\,\di w.
\ee

Using
\[
 \frac{1}{(\Lambda+ikw)^2}
 =\int_0^\infty s e^{-\Lambda s}e^{-ikws}\,\di s,
\]
we may equivalently write
\be\label{eq:Laplace}
 m_\Lambda(k)=-\int_0^\infty s e^{-\Lambda s}\wh\mu(ks)\,\di s,
\ee
which is the Laplace transform of the response kernel appearing in the Volterra formulation of linear Landau damping; see for example \cite{BedrossianMasmoudiMouhotLD,GagnebinIacobelliLD,GrenierNguyenRodnianskiLD,MouhotVillani}.

Denoting by $\cT_\Lambda$ the Fourier multiplier with symbol $m_\Lambda$, we can then rewrite the eigenvalue problem in \eqref{eq:multiplier} as an equation for the electric field
\be\label{eq:fixed}
 E=\cT_\Lambda(nE).
\ee
A nonzero solution $E$ is therefore an eigenfunction of the field operator $E\mapsto\cT_\Lambda(nE)$ with eigenvalue $1$.

If $n\equiv n_0$, the Fourier modes decouple, and a nonzero mode of wave number $k$ exists precisely when
\[
 1=n_0m_\Lambda(k),
\]
which is the classical Penrose dispersion relation. For a nonconstant $n$, however,
\[
 \wh{nE}(k)=\frac1{2\pi}\int_\R\wh n(k-\ell)\wh E(\ell)\,\di\ell,
\]
so \eqref{eq:multiplier} relates different frequencies and no longer reduces to a separate algebraic condition at each $k$. The next proposition proves that the field operator is compact when $n$ is bounded and compactly supported.

A related reduction to a compact spatial operator is used in \cite[Section~8]{HRS22} to construct oscillatory modes around inhomogeneous equilibria of the gravitational Vlasov-Poisson equation.

\begin{prop}
\label{prop:reduction}
Let $\Lambda>0$. Assume $\mu\in W^{1,1}(\R)$, $\mu'\in L^2(\R)$, and $\mu\geq0$, and let $n\in L^\infty(\R)$ have compact support. Then $m_\Lambda\in L^2(\R)$ and the operator
\[
 E\longmapsto\cT_\Lambda(nE)
\]
is Hilbert-Schmidt, hence compact, on $L^2(\R)$. Moreover, nonzero solutions $E\in L^2(\R)$ of \eqref{eq:fixed} are in one-to-one correspondence with nonzero pairs $(H,E)$ such that
\[
 H\in {L^2_{y,w}}\cap L^1_wL^2_y,
 \qquad E\in L^2(\R),\qquad
 E'=\rho_H,\qquad
 (\Lambda+w\,\pt_y)H=-nE\mu'
\]
in the sense of distributions.
\end{prop}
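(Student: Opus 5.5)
We first show that $m_\Lambda\in L^2(\R)$. Since $\mu\geq0$ has unit mass (more precisely, $\mu\in L^1$), we have $|\Lambda+ikw|\geq\Lambda$, so
\[
 |m_\Lambda(k)|\le\Lambda^{-2}\|\mu\|_{L^1}
\]
and $m_\Lambda$ is bounded. For decay at large $|k|$ we use the first form in \eqref{eq:parts}:
\[
 ik\,m_\Lambda(k)=\int_\R\frac{\mu'(w)}{\Lambda+ikw}\,\di w .
\]
The Cauchy--Schwarz inequality then gives
\[
 |k\,m_\Lambda(k)|\le\|\mu'\|_{L^2}\Bigl(\int_\R\frac{\di w}{\Lambda^2+k^2w^2}\Bigr)^{1/2}
 =\|\mu'\|_{L^2}\Bigl(\frac{\pi}{\Lambda|k|}\Bigr)^{1/2}.
\]
Hence $|m_\Lambda(k)|\lesssim|k|^{-3/2}$, and combined with boundedness this yields $m_\Lambda\in L^2$. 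The integration by parts behind this formula is justified because $\mu\in W^{1,1}$, so $\mu$ vanishes at infinity.

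For the Hilbert--Schmidt property, let $K:=\mathcal F^{-1}m_\Lambda\in L^2(\R)$, so that $\cT_\Lambda g=K*g$ for $g\in L^2$ (using the normalization $\frac1{2\pi}$). The operator $E\mapsto\cT_\Lambda(nE)$ then has kernel $K(y-z)n(z)$, and
\[
 \iint|K(y-z)|^2|n(z)|^2\,\di y\,\di z=\|K\|_{L^2}^2\|n\|_{L^2}^2<\infty,
\]
since $n$ is bounded with compact support. The operator is therefore Hilbert--Schmidt and hence compact. Note that this only requires $n\in L^2$, while $nE\in L^1\cap L^2$ because $n\in L^2\cap L^\infty$.

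For the correspondence, start from a nonzero $E\in L^2$ solving \eqref{eq:fixed} and define $H$ through \eqref{eq:Hhat}:
\[
 \wh H(k,w)=-\frac{\mu'(w)}{\Lambda+ikw}\,\wh{nE}(k).
\]
Plancherel in $y$ together with $|\Lambda+ikw|^{-1}\le\Lambda^{-1}$ gives $\|H(\cdot,w)\|_{L^2_y}\le\Lambda^{-1}|\mu'(w)|\,\|nE\|_{L^2}$. This puts $H$ in $L^2_{y,w}$ (as $\mu'\in L^2$) and in $L^1_wL^2_y$ (as $\mu'\in L^1$). The identity $(\Lambda+ikw)\wh H=-\mu'\wh{nE}$ is the Fourier form of the distributional equation $(\Lambda+w\pt_y)H=-nE\mu'$. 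Since $H\in L^1_wL^2_y$, Minkowski's inequality shows $\rho_H\in L^2$, and Fubini gives $\wh{\rho_H}(k)=\int\wh H(k,w)\,\di w$. By the computation \eqref{eq:parts}, which is legitimate for $k\neq0$, this equals $ik\,m_\Lambda(k)\wh{nE}(k)=ik\wh E(k)$, so $E'=\rho_H$ in $\mathcal S'$. Here the value at the single point $k=0$ is irrelevant, as both sides are $L^2$ functions of $k$.

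Conversely, given a pair $(H,E)$ with the stated properties, Fourier transform the equation in $y$. For almost every $w$, $H(\cdot,w)\in L^2_y$, so the distributional equation yields \eqref{eq:Hhat} almost everywhere, since $\Lambda+ikw\neq0$. The step needing the most care is this pointwise identification: one tests against product test functions $\varphi(y)\psi(w)$ and uses Fubini to get the equation for almost every $w$. Integrating in $w$ then gives $ik\wh E=\wh{\rho_H}=ik\,m_\Lambda\wh{nE}$, hence \eqref{eq:fixed} for almost every $k$.

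The two maps are mutually inverse, because $H$ is determined by $E$ through \eqref{eq:Hhat} and $E$ is determined by $H$ as the $L^2$ primitive of $\rho_H$. Finally, nonzero corresponds to nonzero: if $E=0$ then $H=0$ by \eqref{eq:Hhat}, and if $H=0$ then $\rho_H=0$, so the $L^2$ primitive $E$ vanishes.
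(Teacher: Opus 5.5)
Your proof is correct and follows essentially the same route as the paper: boundedness plus high-frequency decay of $m_\Lambda$, the Hilbert--Schmidt kernel $K_\Lambda(y-z)n(z)$, and reconstruction of $H$ from \eqref{eq:Hhat} with Plancherel bounds. The only variation is that you get decay $|k|^{-3/2}$ from Cauchy--Schwarz with $\|\mu'\|_{L^2}$, where the paper uses $\|\mu'\|_{L^1}$ to get $|k|^{-1}$, and either suffices; one harmless slip is that integration by parts actually gives $ik\,m_\Lambda(k)=-\int_\R\mu'(w)(\Lambda+ikw)^{-1}\,\di w$, which does not affect your bound and which you later use with the correct sign.
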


\begin{proof}
For $|k|\leq1$, formula \eqref{eq:multiplier} gives $|m_\Lambda(k)|\leq\|\mu\|_{L^1}/\Lambda^2$, while for $|k|\geq1$ we use \eqref{eq:parts} to obtain
\[
 |m_\Lambda(k)|
 =\frac1{|k|}\lf|
 \int_\R\frac{\mu'(w)}{\Lambda+ikw}\,\di w
 \ri|
 \leq\frac{\|\mu'\|_{L^1}}{\Lambda|k|},
\]
so $m_\Lambda\in L^2(\R)$, and hence, by Plancherel, the convolution kernel $K_\Lambda=\mathcal F^{-1}m_\Lambda$ also belongs to $L^2(\R)$. Since the map $E\mapsto\cT_\Lambda(nE)$ has kernel $K_\Lambda(y-z)n(z)$, we compute
\[
 \iint_{\R^2}|K_\Lambda(y-z)n(z)|^2\,\di y\,\di z
 =\|K_\Lambda\|_{L^2}^2\|n\|_{L^2}^2<\infty,
\]
which proves that this operator is Hilbert-Schmidt.
{In particular, it is compact by \cite[Chapter~II, \S4]{Conway}.}

If $E$ solves \eqref{eq:fixed}, define $H$ by
\[
 \wh H(k,w)=-\frac{\mu'(w)}{\Lambda+ikw}\,\wh{nE}(k).
\]
Since $n$ is bounded, $nE$ belongs to $L^2$, so we can use Plancherel's identity and $|\Lambda+ikw|\geq\Lambda$ to obtain
\[
 \|H(\cdot,w)\|_{L^2_y}
 \leq \Lambda^{-1}|\mu'(w)|\,\|nE\|_{L^2_y},
\]
so, taking the $L^2$ norm in $w$, we find
\[
 \|H\|_{L^2_{y,w}}
 \leq \Lambda^{-1}\|\mu'\|_{L^2_w}\|nE\|_{L^2_y}.
\]
Considering instead the $L^1$ norm in $w$, we get
\[
 \|H\|_{L^1_wL^2_y}
 \leq \Lambda^{-1}\|\mu'\|_{L^1_w}\|nE\|_{L^2_y},
\]
and therefore the spatial density satisfies
\[
 \|\rho_H\|_{L^2_y}
 \leq\int_\R\|H(\cdot,w)\|_{L^2_y}\,\di w
 =\|H\|_{L^1_wL^2_y}.
\]
Formula \eqref{eq:Hhat} gives the kinetic equation in \eqref{eq:eigenvalue}, and using \eqref{eq:parts} we also obtain $\wh{\rho_H}(k)=ik\,m_\Lambda(k)\wh{nE}(k)=ik\,\wh E(k)$, which is the Fourier form of $E'=\rho_H$.
Conversely, for every pair in the statement, we recover \eqref{eq:multiplier} from \eqref{eq:parts}, and hence \eqref{eq:fixed}. If $E=0$, then \eqref{eq:eigenvalue} and $\Lambda>0$ force $H=0$.
\end{proof}

\subsection{{Choice of the velocity profile}}
\label{sec:cauchy}

The idea of using the two-stream instability in our non-uniqueness construction is inspired by our work on the quasineutral limit, and in particular by the instability argument in \cite[Appendix~B]{GPIstab}.
In that work, the instability is used to prove the failure of the quasineutral limit, whereas here we construct a localized unstable mode and use its growth in self-similar time to obtain non-uniqueness.

For every probability density $\mu$, the multiplier in \eqref{eq:multiplier} satisfies
\[
 m_\Lambda(0)=-\Lambda^{-2}.
\]
We look for a profile $\mu$ of the form
\[
 \mu(w)=\frac12\bigl(g(w-u)+g(w+u)\bigr),
\]
where $g$ is a smooth, strictly positive and even probability density, chosen so that $m_\Lambda$ becomes positive at high frequencies and $\mu$ has a finite first moment and satisfies the derivative bounds needed below.

Inspired by computations of dispersion relations for two-stream profiles in the plasma physics literature (see, for example, \cite{RightleyUzdensky}), we take $g$ to be the normalized square of a Cauchy density, so that $g$ has a finite first moment and an explicit Fourier transform.

\begin{defin}\label{def:cauchy}
Let $u,\gamma>0$. We define the centered Cauchy distribution
\[
 C_\gamma(w):=\frac{\gamma}{\pi(w^2+\gamma^2)},
\]
its normalized square
\[
 L_\gamma(w):=2\pi\gamma C_\gamma(w)^2
 =\frac{2\gamma^3}{\pi(w^2+\gamma^2)^2},
\]
and the symmetric two-stream profile $\mu_{u,\gamma}$
\be\label{eq:mu}
 \mu_{u,\gamma}(w)
 :=\frac12\bigl(L_\gamma(w-u)+L_\gamma(w+u)\bigr).
\ee
\end{defin}

The centered profile $L_1$ also appears as a one-dimensional marginal of the generalized Poisson equilibria in \cite[Section~2.1]{IPPW23}, where the associated Fourier-Laplace kernels are computed explicitly.

\begin{lemma}\label{lem:profiles}
Let $u,\gamma>0$, and let $C_\gamma$, $L_\gamma$ and $\mu_{u,\gamma}$ be as in Definition~\ref{def:cauchy}. The following properties hold.
\begin{itemize}
\item[(i)] {The functions $C_\gamma,L_\gamma,\mu_{u,\gamma}\in C^\infty(\R)$ are strictly positive.}
\item[(ii)] The functions $C_\gamma$, $L_\gamma$ and $\mu_{u,\gamma}$ are even.
\item[(iii)] The three functions have unit mass, namely
\[
 \int_\R C_\gamma(w)\,\di w
 =\int_\R L_\gamma(w)\,\di w
 =\int_\R\mu_{u,\gamma}(w)\,\di w=1.
\]
\item[(iv)] The first moments satisfy
\[
 \int_\R |w|C_\gamma(w)\,\di w=+\infty,\qquad
 \int_\R |w|L_\gamma(w)\,\di w=\frac{2\gamma}{\pi},
 \qquad
 \int_\R |w|\mu_{u,\gamma}(w)\,\di w
 \leq u+\frac{2\gamma}{\pi}.
\]
\item[(v)] As $|w|\to\infty$,
\[
 \mu_{u,\gamma}(w)=O(|w|^{-4}),\qquad
 \mu_{u,\gamma}'(w)=O(|w|^{-5}),\qquad
 \mu_{u,\gamma}''(w)=O(|w|^{-6}).
\]
In particular,
\be\label{eq:moments}
 \int_\R(1+|w|)\mu_{u,\gamma}(w)\,\di w<\infty,
 \qquad
 \mu_{u,\gamma}',\;w\mu_{u,\gamma}'\in L^1(\R)\cap L^2(\R),
 \qquad \mu_{u,\gamma}''\in L^1(\R).
\ee
\item[(vi)] With the convention
\[
 \widehat\varphi(\eta):=\int_\R e^{-i\eta w}\varphi(w)\,\di w,
\]
we have
\be\label{eq:Fourier}
 \widehat C_\gamma(\eta)=e^{-\gamma|\eta|},\qquad
 \widehat L_\gamma(\eta)=(1+\gamma|\eta|)e^{-\gamma|\eta|},\qquad
 \widehat\mu_{u,\gamma}(\eta)
 =(1+\gamma|\eta|)e^{-\gamma|\eta|}\cos(u\eta).
\ee
\end{itemize}
\end{lemma}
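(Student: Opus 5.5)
Each item reduces to elementary calculus with the explicit formulas in Definition~\ref{def:cauchy}. The one step with any content is the Fourier transform of $L_\gamma$ in (vi). I will prove that first, since the mass identity in (iii) can then be read off at $\eta=0$.

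For (i) and (ii) there is little to do. $C_\gamma$ and $L_\gamma$ are rational functions of $w^2$ whose denominators are bounded below by $\gamma^2$ and $\gamma^4$. Hence they are smooth, strictly positive and even. $\mu_{u,\gamma}$ is a positive combination of translates of $L_\gamma$, so it is smooth and positive. It is even because $w\mapsto -w$ exchanges $L_\gamma(w-u)$ and $L_\gamma(w+u)$, using that $L_\gamma$ is even.

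For (vi), I start from the classical formula $\widehat C_\gamma(\eta)=e^{-\gamma|\eta|}$. It follows, for instance, by computing the inverse transform of $e^{-\gamma|\eta|}$ directly and applying Fourier inversion. Since $L_\gamma=2\pi\gamma C_\gamma^2$, the convolution theorem (with our convention, $\widehat{fg}=\frac1{2\pi}\widehat f*\widehat g$) gives
\[
 \widehat L_\gamma(\eta)=\gamma\int_\R e^{-\gamma|\eta-\zeta|}e^{-\gamma|\zeta|}\,\di\zeta .
\]
For $\eta\ge0$ I split this integral over $\zeta<0$, $0\le\zeta\le\eta$ and $\zeta>\eta$. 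The three pieces are $e^{-\gamma\eta}/(2\gamma)$, $\eta e^{-\gamma\eta}$ and $e^{-\gamma\eta}/(2\gamma)$. After multiplying by $\gamma$ the sum is $(1+\gamma\eta)e^{-\gamma\eta}$, and evenness covers $\eta<0$. A shift by $\pm u$ multiplies the transform by $e^{\mp iu\eta}$, and averaging the two shifts yields the factor $\cos(u\eta)$.

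Item (iii) then comes from evaluating all three transforms at $\eta=0$, or from the direct value $\int\frac{\di w}{w^2+\gamma^2}=\pi/\gamma$ for $C_\gamma$.

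For (iv):
\begin{itemize}
\item $|w|C_\gamma(w)\sim\frac{\gamma}{\pi|w|}$ at infinity, so its integral diverges.
\item For $L_\gamma$, I use evenness and the substitution $s=w^2$:
\[
 2\int_0^\infty\frac{2\gamma^3w}{\pi(w^2+\gamma^2)^2}\,\di w=\frac{2\gamma^3}{\pi}\int_0^\infty\frac{\di s}{(s+\gamma^2)^2}=\frac{2\gamma}{\pi}.
\]
\item For $\mu_{u,\gamma}$, I use $|w|\le|w\mp u|+u$ in each translate and the unit mass.
\end{itemize}

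For (v), each translate of $L_\gamma$ is $O(|w|^{-4})$. Differentiating gives $L_\gamma'(w)=-\frac{8\gamma^3w}{\pi(w^2+\gamma^2)^3}=O(|w|^{-5})$, and one more derivative gives $O(|w|^{-6})$. Translation by a fixed $u$ does not change these decay rates. Combined with smoothness, the decay gives the integrability claims in \eqref{eq:moments}, since every function listed there, including $w\mu'$, is $O(|w|^{-4})$ or better at infinity.
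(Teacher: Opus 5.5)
Your proof is correct. For (i), (ii), (iv) and (v) it matches the paper up to cosmetic choices: for instance, the paper shows that $\int|w|C_\gamma$ diverges via the truncated integral $\frac{\gamma}{\pi}\log\frac{R^2+\gamma^2}{\gamma^2}$, while you use the asymptotics $|w|C_\gamma\sim\gamma/(\pi|w|)$.

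The real difference is in (vi), and consequently in (iii). The paper obtains $\widehat C_\gamma$ by residues rather than by Fourier inversion. It then uses the algebraic identity $L_\gamma=C_\gamma-\gamma\,\pt_\gamma C_\gamma$, so that differentiating $e^{-\gamma|\eta|}$ in $\gamma$ gives $\widehat L_\gamma=(1+\gamma|\eta|)e^{-\gamma|\eta|}$ in one line. It proves (iii) separately, computing $\int C_\gamma=1$ and $\int C_\gamma^2=1/(2\pi\gamma)$ with the substitution $w=\gamma z$.

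You instead work directly from the defining relation $L_\gamma=2\pi\gamma C_\gamma^2$ and apply $\widehat{fg}=\frac1{2\pi}\widehat f*\widehat g$. This is legitimate here because $C_\gamma$ and $\widehat C_\gamma$ are both integrable, so Fubini applies once $C_\gamma$ is written by inversion. You then evaluate the self-convolution of $e^{-\gamma|\cdot|}$ explicitly, and your three pieces and their sum are right.

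The paper's trick is shorter, but it needs the auxiliary identity and a routine differentiation under the integral sign in $\gamma$. Your route is more mechanical, explains where the factor $1+\gamma|\eta|$ comes from, and yields all three unit-mass identities in (iii) at once by evaluating at $\eta=0$.

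One small imprecision in (v): $(1+|w|)\mu_{u,\gamma}$ is only $O(|w|^{-3})$, not $O(|w|^{-4})$. That is still integrable, so the conclusion stands.
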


\begin{proof}
The formulas in Definition~\ref{def:cauchy} prove (i). Since $C_\gamma$ and $L_\gamma$ are even,
\[
 \mu_{u,\gamma}(-w)
 =\frac12\bigl(L_\gamma(w+u)+L_\gamma(w-u)\bigr)
 =\mu_{u,\gamma}(w),
\]
which proves (ii).

For (iii), changing variables $w=\gamma z$, we obtain
\[
 \int_\R C_\gamma(w)\,\di w
 =\frac1\pi\int_\R\frac{1}{1+z^2}\,\di z=1,
 \qquad
 \int_\R C_\gamma(w)^2\,\di w=\frac{1}{2\pi\gamma}.
\]
It follows from the definition of $L_\gamma$ and invariance under translations that
\[
 \int_\R L_\gamma(w)\,\di w
 =\int_\R\mu_{u,\gamma}(w)\,\di w=1.
\]

To prove (iv), we first observe that
\[
 \int_{-R}^R|w|C_\gamma(w)\,\di w
 =\frac{\gamma}{\pi}\log\left(\frac{R^2+\gamma^2}{\gamma^2}\right),
\]
which diverges as $R\to\infty$. Considering $L_\gamma$ instead, we compute
\[
 \int_\R |w|L_\gamma(w)\,\di w
 =\frac{4\gamma^3}{\pi}\int_0^\infty\frac{w}{(w^2+\gamma^2)^2}\,\di w
 =\frac{2\gamma}{\pi},
\]
and, after translating each copy of $L_\gamma$ and using the triangle inequality, we conclude that
\[
 \int_\R |w|\mu_{u,\gamma}(w)\,\di w
 \leq u+\int_\R |w|L_\gamma(w)\,\di w,
\]
which is the last estimate in (iv).

For (v), differentiating $L_\gamma$ gives
\be\label{eq:Cauchy}
 L_\gamma'(w)
 =-\frac{8\gamma^3w}{\pi(w^2+\gamma^2)^3},
 \qquad
 L_\gamma''(w)
 =\frac{8\gamma^3(5w^2-\gamma^2)}{\pi(w^2+\gamma^2)^4}.
\ee
The three decay estimates follow from \eqref{eq:Cauchy} and \eqref{eq:mu}.
To deduce \eqref{eq:moments}, we first combine (iii) and (iv) to get the finite first moment.
Since $\mu_{u,\gamma}'$ and $w\mu_{u,\gamma}'$ are bounded on compact sets and decay respectively as $|w|^{-5}$ and $|w|^{-4}$, they belong to $L^1(\R)\cap L^2(\R)$, and the same argument applied to $\mu_{u,\gamma}''$ proves that $\mu_{u,\gamma}''\in L^1(\R)$.

Finally, computing the residues for the Cauchy distribution, we obtain
\[
 \widehat C_\gamma(\eta)
 =\frac1\pi\int_\R\frac{e^{-i\gamma\eta z}}{1+z^2}\,\di z
 =e^{-\gamma|\eta|}.
\]
Differentiating with respect to $\gamma$, we find
\[
 C_\gamma(w)-\gamma\pt_\gamma C_\gamma(w)=L_\gamma(w).
\]
Taking the Fourier transform and then applying the translation formula, we can write
\[
 \bal
 \widehat L_\gamma(\eta)
 &=\widehat C_\gamma(\eta)-\gamma\pt_\gamma\widehat C_\gamma(\eta)
 =(1+\gamma|\eta|)e^{-\gamma|\eta|},\\
 \widehat\mu_{u,\gamma}(\eta)
 &=\frac12\bigl(e^{-iu\eta}+e^{iu\eta}\bigr)\widehat L_\gamma(\eta)
 =(1+\gamma|\eta|)e^{-\gamma|\eta|}\cos(u\eta),
 \eal
\]
which proves (vi).
\end{proof}

We now compute $m_\Lambda$ and show that it is positive at high frequencies when $u>\gamma$, which will allow us to choose the spatial profile in the next subsection.

\begin{lemma}
\label{lem:multiplier}
Let $u,\gamma,\Lambda>0$. For $k\in\R$, write
\[
 A:=\Lambda+\gamma|k|,\qquad B:=A^2+u^2k^2.
\]
Then
\be\label{eq:symbol}
 m_\Lambda(k)
 =\frac{u^2k^2-A^2}{B^2}
 +2\gamma|k|A\,\frac{3u^2k^2-A^2}{B^3}.
\ee
Moreover,
\be\label{eq:asymptotic}
 m_\Lambda(k)
 =\frac{u^4+6\gamma^2u^2-3\gamma^4}
 {(u^2+\gamma^2)^3}\,\frac1{|k|^2}
 +O(|k|^{-3}).
\ee
In particular, if $u>\gamma$, then
\be\label{eq:positive}
 m_\Lambda(k)>0
 \qquad\text{for all sufficiently large }|k|.
\ee
\end{lemma}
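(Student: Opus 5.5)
Since $m_\Lambda(-k)=\overline{m_\Lambda(k)}$ and $\mu$ is even, $m_\Lambda$ is even and real; it suffices to treat $k>0$ (the case $k=0$ is the already noted $m_\Lambda(0)=-\Lambda^{-2}$, consistent with \eqref{eq:symbol}). I will start from the Laplace representation \eqref{eq:Laplace} together with Lemma~\ref{lem:profiles}(vi). For $k>0$ and $s>0$ we have $\wh\mu(ks)=(1+\gamma ks)e^{-\gamma ks}\cos(uks)$, so
\[
 m_\Lambda(k)=-\re\int_0^\infty s(1+\gamma k s)e^{-(A-iuk)s}\,\di s,
 \qquad A=\Lambda+\gamma k.
\]
Writing $z:=A-iuk$, so that $\re z=A>0$, the elementary integrals $\int_0^\infty s e^{-zs}\,\di s=z^{-2}$ and $\int_0^\infty s^2e^{-zs}\,\di s=2z^{-3}$ give
\[
 m_\Lambda(k)=-\re\bigl(z^{-2}+2\gamma k\,z^{-3}\bigr).
\]

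Next I compute the real parts using $|z|^2=B$. We have $z^{-2}=\bar z^2/B^2$ with $\re\bar z^2=A^2-u^2k^2$. Likewise $z^{-3}=\bar z^3/B^3$ with $\re \bar z^3=A^3-3Au^2k^2=A(A^2-3u^2k^2)$. Substituting and changing signs gives exactly \eqref{eq:symbol}, with $|k|$ in place of $k$ by evenness. The only point needing care is justifying the interchange of integrals in \eqref{eq:Laplace}. This holds by Fubini, since $\int_0^\infty\!\int s e^{-\Lambda s}\mu\,\di w\,\di s<\infty$ and $\mu\in L^1$.

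For the asymptotics \eqref{eq:asymptotic}, let $k\to\infty$. Then $A=\gamma k+\Lambda=\gamma k(1+O(k^{-1}))$ and $B=k^2(u^2+\gamma^2)(1+O(k^{-1}))$. The first term of \eqref{eq:symbol} becomes
\[
 \frac{(u^2-\gamma^2)}{k^2(u^2+\gamma^2)^2}+O(k^{-3}),
\]
and the second becomes
\[
 \frac{2\gamma^2(3u^2-\gamma^2)}{k^2(u^2+\gamma^2)^3}+O(k^{-3}).
\]
Over the common denominator $(u^2+\gamma^2)^3$, the numerator is $(u^2-\gamma^2)(u^2+\gamma^2)+6\gamma^2u^2-2\gamma^4=u^4+6\gamma^2u^2-3\gamma^4$. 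This is \eqref{eq:asymptotic}. The error terms are genuinely $O(k^{-3})$, because each term is a rational function of $k$ whose expansion in $1/k$ starts at order $k^{-2}$.

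Finally, for \eqref{eq:positive}, I write the leading numerator as $u^4+6\gamma^2u^2-3\gamma^4=(u^4-\gamma^4)+6\gamma^2u^2-2\gamma^4$. If $u>\gamma$, then $u^4-\gamma^4>0$ and $6\gamma^2u^2-2\gamma^4>4\gamma^4>0$, so the coefficient is strictly positive. The leading term then dominates the $O(|k|^{-3})$ remainder for $|k|$ large, which gives \eqref{eq:positive}.

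There is no real obstacle here. The main risk is sign and algebra bookkeeping in the real parts of $\bar z^2$ and $\bar z^3$, and in the final numerator.
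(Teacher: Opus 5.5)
Your proof is correct and follows essentially the same route as the paper. Both arguments start from the Laplace representation \eqref{eq:Laplace} with the explicit transform of $\mu_{u,\gamma}$ and evaluate the elementary integrals, where the paper differentiates $\int_0^\infty e^{-As}\cos(u|k|s)\,\di s=A/B$ in $A$ and you use $z=A-iuk$; the $1/|k|$ expansion is then identical, and the only difference at the end is that the paper records the sharper threshold $u^2/\gamma^2>-3+2\sqrt3$, while your decomposition covers exactly the stated case $u>\gamma$.
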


\begin{proof}
We use \eqref{eq:multiplier} and \eqref{eq:Laplace} to write, for the profile $\mu_{u,\gamma}$ defined in \eqref{eq:mu},
\[
 \bal
 m_\Lambda(k)
 &=-\int_\R\frac{\mu_{u,\gamma}(w)}{(\Lambda+ikw)^2}\,\di w\\
 &=-\int_0^\infty s e^{-\Lambda s}\wh\mu_{u,\gamma}(ks)\,\di s,
 \eal
\]
so, using \eqref{eq:Fourier} and the notation in the statement, we obtain
\be\label{eq:mu Laplace}
 m_\Lambda(k)
 =-\int_0^\infty s e^{-As}(1+\gamma|k|s)\cos(u|k|s)\,\di s.
\ee
Differentiating the identity
\[
 \int_0^\infty e^{-As}\cos(u|k|s)\,\di s=\frac{A}{B}
\]
once and twice with respect to $A$, keeping $u|k|$ fixed, we find
\be\label{eq:cosine}
 \int_0^\infty s e^{-As}\cos(u|k|s)\,\di s
 =\frac{A^2-u^2k^2}{B^2},
 \qquad
 \int_0^\infty s^2 e^{-As}\cos(u|k|s)\,\di s
 =\frac{2A(A^2-3u^2k^2)}{B^3}.
\ee
Substituting \eqref{eq:cosine} into \eqref{eq:mu Laplace} proves \eqref{eq:symbol}. Since $A=\gamma|k|+\Lambda$ and $B=(u^2+\gamma^2)|k|^2+2\gamma\Lambda|k|+\Lambda^2$, we have
\[
 \frac{u^2k^2-A^2}{B^2}
 =\frac{u^2-\gamma^2}{(u^2+\gamma^2)^2}\frac1{|k|^2}
 +O(|k|^{-3})
\]
and
\[
 2\gamma|k|A\frac{3u^2k^2-A^2}{B^3}
 =\frac{2\gamma^2(3u^2-\gamma^2)}{(u^2+\gamma^2)^3}\frac1{|k|^2}
 +O(|k|^{-3}),
\]
so, adding these two expansions, we obtain \eqref{eq:asymptotic}, whose leading coefficient is positive when
\[
 \frac{u^2}{\gamma^2}>-3+2\sqrt3,
\]
which holds, in particular, if $u>\gamma$, and therefore \eqref{eq:positive} follows from \eqref{eq:asymptotic}.
\end{proof}

\begin{remark}
If $u=0$, instead,
\[
 m_\Lambda(k)
 =-\frac1{(\Lambda+\gamma|k|)^2}
 -\frac{2\gamma|k|}{(\Lambda+\gamma|k|)^3}<0,
\]
so the multiplier is negative for every $k$, as predicted by the Penrose stability criterion \cite{Penrose60}.
\end{remark}

{
\begin{remark}
The leading coefficient in \eqref{eq:asymptotic} is the Penrose integral of $\mu_{u,\gamma}$ at $w=0$. Indeed, integrating by parts in \eqref{eq:multiplier}, we obtain, for $k\ne0$,
\[
 m_\Lambda(k)=\frac1{k^2}\int_\R\frac{\mu_{u,\gamma}'(w)}{w-i\Lambda/k}\,\di w.
\]
Since $\mu_{u,\gamma}$ is smooth and even, and its derivative has the decay stated in Lemma~\ref{lem:profiles}, the function $|\mu_{u,\gamma}'(w)|/|w|$ is integrable. Dominated convergence and another integration by parts give
\[
 \bal
 \lim_{|k|\to\infty}|k|^2m_\Lambda(k)
 &=\int_\R\frac{\mu_{u,\gamma}'(w)}{w}\,\di w
 =\int_\R\frac{\mu_{u,\gamma}(w)-\mu_{u,\gamma}(0)}{w^2}\,\di w\\
 &=\frac{u^4+6\gamma^2u^2-3\gamma^4}{(u^2+\gamma^2)^3}.
 \eal
\]
When $u>\gamma$, the point $w=0$ is a strict local minimum of $\mu_{u,\gamma}$ and this integral is positive, so the profile satisfies the Penrose instability condition on the whole line \cite{Penrose60,HKH15}.
\end{remark}
}

\subsection{Choice of the spatial profile}
\label{sec:space}

Having fixed the velocity profile $\mu_{u,\gamma}$, we now choose $n$ so that $\cL_{n,\mu_{u,\gamma}}^{\rm loc}$ has a prescribed positive eigenvalue.
To use the positivity of $m_\Lambda$ in \eqref{eq:positive}, we first show that the field operator $E\mapsto\cT_\Lambda(nE)$ and the compact self-adjoint operator $\sqrt n\,\cT_\Lambda\sqrt n$ have the same nonzero eigenvalues. We then choose $n$ so that the quadratic form of the latter is positive on a suitable test function.

\begin{lemma}
\label{lem:symmetrization}
Let $\Lambda>0$ and $u>\gamma>0$, and let $n\in L^\infty(\R)$ be nonnegative and compactly supported.
For the velocity profile $\mu_{u,\gamma}$, the following properties hold.
\begin{itemize}
\item[(i)] The operator $\sqrt n\cT_\Lambda\sqrt n$ is compact and self-adjoint on $L^2(\R)$.
\item[(ii)] Its nonzero eigenvalues coincide with those of $E\mapsto\cT_\Lambda(nE)$.
Moreover, a simple nonzero eigenvalue of the symmetrized operator is algebraically simple for the field operator.
For each $\nu\ne0$, the correspondence between eigenvectors is given by
\be\label{eq:ABBA}
 g=\sqrt n\,E,
 \qquad E=\nu^{-1}\cT_\Lambda(\sqrt n\,g).
\ee
\end{itemize}
\end{lemma}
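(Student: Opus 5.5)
Self-adjointness reduces to $m_\Lambda$ being real and even, and the rest is the standard $AB$/$BA$ spectral correspondence with $A=\sqrt n$, $B=\cT_\Lambda\sqrt n$.

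\emph{Self-adjointness.} Formula \eqref{eq:symbol} shows that $m_\Lambda$ is real-valued and depends only on $|k|$, so it is even. Hence its inverse Fourier transform $K_\Lambda$ is real and even, and $\cT_\Lambda$ is a bounded self-adjoint operator on $L^2(\R)$. This uses $m_\Lambda\in L^\infty$, which follows from the bounds in Proposition~\ref{prop:reduction}. Multiplication by $\sqrt n$ is bounded and self-adjoint, since $n\ge0$ is real and bounded. Therefore $(\sqrt n\,\cT_\Lambda\sqrt n)^*=\sqrt n\,\cT_\Lambda\sqrt n$.

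\emph{Compactness.} The kernel of $\sqrt n\,\cT_\Lambda\sqrt n$ is $\sqrt{n(y)}K_\Lambda(y-z)\sqrt{n(z)}$. Its $L^2(\R^2)$ norm is at most $\|n\|_{L^\infty}\|K_\Lambda\|_{L^2}\|\sqrt n\|_{L^2}$, which is finite because $n$ is bounded with compact support and $K_\Lambda\in L^2$ by Proposition~\ref{prop:reduction}. The operator is thus Hilbert--Schmidt, which gives (i).

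\emph{Eigenvalue correspondence.} Write $A=\sqrt n$ and $B=\cT_\Lambda\sqrt n$, so that $\sqrt n\,\cT_\Lambda\sqrt n=AB$ and $E\mapsto \cT_\Lambda(nE)=BA$. If $BAE=\nu E$ with $E\ne0$ and $\nu\ne0$, set $g=AE$. Then $ABg=A(BAE)=\nu g$, and $g\ne0$ because otherwise $\nu E=BAE=Bg=0$. Conversely, if $ABg=\nu g$ with $g\ne0$, set $E=\nu^{-1}Bg$. Then $BAE=\nu^{-1}B(ABg)=Bg=\nu E$, and $AE=\nu^{-1}ABg=g$, so $E\ne0$. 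These two maps are mutually inverse on the $\nu$-eigenspaces, which gives \eqref{eq:ABBA}. The same intertwining relations $A(BA-\nu)^j=(AB-\nu)^jA$ and $B(AB-\nu)^j=(BA-\nu)^jB$ show that $A$ and $\nu^{-1}B$ map generalized eigenspaces injectively into one another. Injectivity follows by the same argument applied to $(BA-\nu)^j$: if $AE=0$ with $E$ in the generalized eigenspace, then $(BA-\nu)E=-\nu E$, and iterating forces $E=0$. Consequently the algebraic multiplicities of $\nu$ for $AB$ and for $BA$ coincide. Both are finite because both operators are compact, $BA$ being compact by Proposition~\ref{prop:reduction}.

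\emph{Simplicity.} If $\nu$ is a simple eigenvalue of the self-adjoint operator $AB$, its algebraic multiplicity equals its geometric multiplicity, which is $1$. By the previous step the algebraic multiplicity for $BA$ is also $1$. The only step needing care is this last transfer of multiplicities for generalized eigenvectors, and the intertwining identities above handle it.
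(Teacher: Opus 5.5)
Your proof is correct. For (i) and for the eigenvector correspondence \eqref{eq:ABBA} it is the paper's argument, with the roles of $A$ and $B$ interchanged: the paper takes $A=\cT_\Lambda\sqrt n$ and $B=\sqrt n$, so that $AB$ is the field operator and $BA$ the symmetrized one.

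The real difference is in how simplicity is transferred. The paper works directly on the field side. If $(AB-\nu I)^2E=0$, then $(BA-\nu I)^2\sqrt n\,E=0$. Self-adjointness of $BA$ gives $\|(BA-\nu I)\sqrt n\,E\|^2=\langle \sqrt n\,E,(BA-\nu I)^2\sqrt n\,E\rangle=0$. Pulling this back with $\nu\ne0$ yields $\ker(AB-\nu I)^2=\ker(AB-\nu I)$.

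You instead prove a general statement. For every $\nu\ne0$, multiplication by $\sqrt n$ and the operator $\cT_\Lambda\sqrt n$ map the generalized eigenspaces of the two operators injectively into each other. Hence their algebraic multiplicities agree, with no self-adjointness involved. You then use self-adjointness only to equate algebraic and geometric multiplicity on the symmetrized side.

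Your route is slightly more general and makes clear where each hypothesis enters. In fact, injectivity of $\sqrt n$ from the field operator's generalized eigenspace into a one-dimensional space already suffices, so your appeal to compactness for finiteness is not needed. The paper's version is a shorter computation tailored to the case at hand.

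One small slip: the Hilbert--Schmidt norm of $\sqrt{n(y)}K_\Lambda(y-z)\sqrt{n(z)}$ is bounded by $\|n\|_{L^\infty}^{1/2}\|K_\Lambda\|_{L^2}\|\sqrt n\|_{L^2}$, not by the same expression with $\|n\|_{L^\infty}$. Only finiteness is needed, so this is harmless.
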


\begin{proof}
To prove (i), we first observe that $m_\Lambda$ is bounded and, by the evenness of $\mu_{u,\gamma}$, real-valued.
It follows that $\cT_\Lambda$ is a bounded self-adjoint operator on $L^2(\R)$. Since $n$ and $\sqrt n$ are bounded and compactly supported, Proposition~\ref{prop:reduction} also implies that $E\mapsto\cT_\Lambda(nE)$ and $g\mapsto\cT_\Lambda(\sqrt n\,g)$ are Hilbert-Schmidt, and since multiplication by $\sqrt n$ is bounded, we deduce that $\sqrt n\cT_\Lambda\sqrt n$ is compact.
Moreover,
\[
 \bigl(\sqrt n\cT_\Lambda\sqrt n\bigr)^*
 =\sqrt n\cT_\Lambda^*\sqrt n
 =\sqrt n\cT_\Lambda\sqrt n,
\]
which proves self-adjointness.

To prove that the field operator and the symmetrized operator have the same nonzero eigenvalues, we use the $AB$-$BA$ correspondence (see, for example, {\cite[Chapter~VII, \S3]{Conway}} and \cite[Chapter~10]{Rudin}) and set
\[
 A\varphi:=\cT_\Lambda(\sqrt n\,\varphi),
 \qquad B\varphi:=\sqrt n\,\varphi,
\]
so that $ABE=\cT_\Lambda(nE)$ and $BAg=\sqrt n\,\cT_\Lambda(\sqrt n\,g)$.
Let $\nu\ne0$. If $ABE=\nu E$, we set $g:=BE$ and obtain
\[
 BAg=B(ABE)=\nu g,\qquad Ag=\nu E.
\]
Conversely, if $BAg=\nu g$, setting $E:=\nu^{-1}Ag$ gives
\[
 ABE=\nu^{-1}A(BAg)=\nu E,\qquad BE=\nu^{-1}BAg=g,
\]
so these maps are inverse to each other on the eigenspaces, which proves the claimed correspondence.

Suppose now that $\nu$ is a simple nonzero eigenvalue of $BA$. Since $BA$ is self-adjoint, $\nu$ is real.
If $(AB-\nu I)^2E=0$, then
\[
 \begin{aligned}
 &(BA-\nu I)^2BE=B(AB-\nu I)^2E=0,\\
 &\|(BA-\nu I)BE\|_{L^2}^2
 =\langle BE,(BA-\nu I)^2BE\rangle=0.
 \end{aligned}
\]
Using again $(AB-\nu I)^2E=0$, we find
\[
 \nu(AB-\nu I)E=AB(AB-\nu I)E=A(BA-\nu I)BE=0,
\]
and since $\nu\ne0$, we deduce that $\ker((AB-\nu I)^2)=\ker(AB-\nu I)$, so the generalized eigenspace coincides with the eigenspace. The latter is one-dimensional by the correspondence in \eqref{eq:ABBA}, and therefore $\nu$ is algebraically simple.
\end{proof}

{
By Lemma~\ref{lem:symmetrization}, to obtain a positive eigenvalue it is enough to choose $n$ and $g$ such that
\[
 \langle g,\sqrt n\,\cT_\Lambda(\sqrt n\,g)\rangle>0.
\]
We use the positivity of $m_\Lambda$ at high frequencies to construct such a test function.

}

\begin{lemma}
\label{lem:positive}
Fix $\Lambda>0$ and $u>\gamma>0$. There exists a nonnegative $\chi\in C_c^\infty(\R)$ such that, for $n:=\chi^2$, the compact self-adjoint operator $\sqrt n\cT_\Lambda\sqrt n$ has a positive eigenvalue.
\end{lemma}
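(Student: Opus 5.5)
We want a nonnegative $\chi\in C_c^\infty(\R)$ and a test function $g\in L^2(\R)$ with
\[
 \langle g,\sqrt n\,\cT_\Lambda(\sqrt n\,g)\rangle>0,
 \qquad n=\chi^2 .
\]
Since $\sqrt n=\chi$, this quadratic form equals $\langle \chi g,\cT_\Lambda(\chi g)\rangle$. By Lemma~\ref{lem:symmetrization}(i), $\sqrt n\cT_\Lambda\sqrt n$ is compact and self-adjoint. Its spectrum therefore consists of real eigenvalues accumulating only at $0$, and its supremum satisfies $\sup\sigma=\sup_{\|g\|=1}\langle g,\sqrt n\cT_\Lambda\sqrt n\,g\rangle$. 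A positive value of the form thus forces a positive eigenvalue by the min-max principle for compact self-adjoint operators.

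It is enough to produce a function $\phi=\chi g$ with
\[
 \langle\phi,\cT_\Lambda\phi\rangle=\frac1{2\pi}\int_\R m_\Lambda(k)|\wh\phi(k)|^2\,\di k>0 .
\]
Here $m_\Lambda$ is real and even (Lemma~\ref{lem:symmetrization}), so Plancherel applies. By Lemma~\ref{lem:multiplier}, since $u>\gamma$ there is $K>0$ with $m_\Lambda(k)\ge c_0|k|^{-2}>0$ for $|k|\ge K$. Moreover $|m_\Lambda|\le C$ everywhere, since $|m_\Lambda|\le\Lambda^{-2}$ from \eqref{eq:multiplier}.

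The plan is to concentrate $\wh\phi$ at high frequency. Take a nonnegative $\chi\in C_c^\infty(\R)$ with $\chi\equiv1$ on $[-1,1]$, supported in $[-2,2]$. Fix a nonnegative bump $\psi\in C_c^\infty((-1,1))$, $\psi\not\equiv0$, and set $\phi_N(y):=\psi(y)\cos(Ny)$ and $g:=\phi_N$. Then $\chi g=\phi_N$ exactly, because $\chi=1$ on the support of $\psi$. We have
\[
 \wh\phi_N(k)=\tfrac12\bigl(\wh\psi(k-N)+\wh\psi(k+N)\bigr),
\]
and $\wh\psi$ is Schwartz, so the Fourier mass of $\phi_N$ concentrates near $\pm N$.

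Split the integral into $|k|\ge K$ and $|k|<K$. For $N\ge 2K$ the high-frequency part is bounded below by
\[
 \frac{c_0}{2\pi}\int_{|k|\ge K}|k|^{-2}|\wh\phi_N|^2\,\di k
 \;\ge\; \frac{c_0}{2\pi}\int_{|k-N|\le1}\frac{|\wh\psi(k-N)|^2/4-\text{(cross terms)}}{(N+1)^2}\,\di k
 \;\gtrsim\; N^{-2}.
\]
The cross term $\wh\psi(k-N)\overline{\wh\psi(k+N)}$ is $O(N^{-M})$ on this region. The lower bound $\gtrsim N^{-2}$ uses $|\wh\psi(0)|=\int\psi>0$, so $|\wh\psi|$ is bounded below near $0$. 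The low-frequency part is bounded by $C\int_{|k|<K}|\wh\psi(k\mp N)|^2\,\di k=O(N^{-M})$ for every $M$, by the rapid decay of $\wh\psi$. Hence for $N$ large the form is positive.

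The only genuinely delicate point is this competition between the $|k|^{-2}$ decay of the positive part of $m_\Lambda$ and the negative low-frequency values, such as $m_\Lambda(0)=-\Lambda^{-2}$. It is resolved because the low-frequency leakage of a smooth compactly supported modulated bump is superpolynomially small, while the gain is only polynomially small. Finally, $n=\chi^2$ is nonnegative, bounded and compactly supported, so Lemma~\ref{lem:symmetrization} applies, and the maximal eigenvalue is positive.
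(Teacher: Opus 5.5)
Your proposal is correct and follows essentially the same route as the paper. Both arguments take a smooth bump modulated to high frequency and apply Plancherel, then use $m_\Lambda\geq C|k|^{-2}$ for $|k|\geq K$ against the $O(N^{-M})$ low-frequency leakage, and conclude with the spectral theorem for the compact self-adjoint operator $\sqrt n\,\cT_\Lambda\sqrt n$. Your two departures are cosmetic: the real modulation $\psi(y)\cos(Ny)$ only adds a negligible cross term compared with the paper's $\phi(y)e^{ik_0y}$, and taking $\chi\equiv1$ on $\spt\psi$ replaces the paper's choice $g=\psi_{k_0}/\chi$.
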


\begin{proof}
We choose a nonzero real function $\phi\in C_c^\infty(\R)$ and set $\psi_{k_0}(y):=\phi(y)e^{ik_0y}$. By Plancherel,
\be\label{eq:rayleigh}
 \langle \psi_{k_0},\cT_\Lambda \psi_{k_0}\rangle
 =\frac1{2\pi}\int_\R
 m_\Lambda(k)\,|\wh\phi(k-k_0)|^2\,\di k.
\ee
By \eqref{eq:asymptotic}, there exist $K,C_1>0$ such that
\[
 m_\Lambda(k)\geq C_1|k|^{-2}\qquad\text{for }|k|\geq K.
\]
Choose $R>0$ such that $\int_{-R}^R|\wh\phi(k)|^2\,\di k>0$.
Since $\wh\phi$ is a Schwartz function, for every integer $N\geq1$ and all sufficiently large $k_0$,
\[
 \int_{|k|\leq K}|m_\Lambda(k)|\,|\wh\phi(k-k_0)|^2\,\di k
 \leq C_N k_0^{-N},
\]
while
\[
 \int_{k_0-R}^{k_0+R}m_\Lambda(k)\,|\wh\phi(k-k_0)|^2\,\di k
 \geq\frac{C_1}{(k_0+R)^2}\int_{-R}^R|\wh\phi(k)|^2\,\di k
 \geq C_2 k_0^{-2}
\]
for some $C_2>0$ independent of $k_0$. Since the remaining frequencies outside $[-K,K]$ make a nonnegative contribution, taking $N>2$ in \eqref{eq:rayleigh} gives
\[
 2\pi\langle\psi_{k_0},\cT_\Lambda\psi_{k_0}\rangle
 \geq C_2k_0^{-2}-C_Nk_0^{-N}>0
\]
for all sufficiently large $k_0$.

Let $\chi\in C_c^\infty(\R)$, $\chi\geq0$, with $\chi>0$ on $\spt\phi$, and put $n:=\chi^2$. Taking $g:=\psi_{k_0}/\chi$ on $\spt\phi$ and $g:=0$ outside, we obtain
\[
 \langle g,\sqrt n\cT_\Lambda\sqrt ng\rangle
 =\langle \psi_{k_0},\cT_\Lambda \psi_{k_0}\rangle>0,
\]
and since the operator is compact and self-adjoint by Lemma~\ref{lem:symmetrization}(i), it has a positive eigenvalue {by the spectral theorem \cite[Chapter~II, \S5]{Conway}}.
\end{proof}

{We next show that, after a small perturbation of $n$, the symmetrized operator has a simple positive eigenvalue.}

\begin{lemma}
\label{lem:simplicity}
Fix $\Lambda>0$ and $u>\gamma>0$. There exists $\widetilde n\in C_c^\infty(\R)$, $\widetilde n\geq0$, such that the compact self-adjoint operator $\sqrt{\widetilde n}\cT_\Lambda\sqrt{\widetilde n}$ has a simple positive eigenvalue $\nu_*$. 
\end{lemma}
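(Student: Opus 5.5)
We start from the profile $n=\chi^2$ given by Lemma~\ref{lem:positive}, for which $T_n:=\sqrt n\,\cT_\Lambda\sqrt n$ has a positive eigenvalue. Its largest eigenvalue $\nu_1:=\sup_{\|g\|=1}\langle g,T_ng\rangle>0$ has finite multiplicity $d$ by compactness. If $d=1$ we are done with $\widetilde n=n$. Otherwise we split the degenerate top eigenvalue by a one-parameter perturbation $n_\delta$ of $n$ and apply first-order perturbation theory for compact self-adjoint operators.

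\emph{Step 1: continuity in $n$.} If $\chi_\delta\to\chi$ in $L^\infty$ with supports in a fixed compact set, then $T_{n_\delta}\to T_n$ in operator norm (indeed in Hilbert--Schmidt norm). This holds because $\cT_\Lambda$ is bounded and $g\mapsto\cT_\Lambda(\sqrt n\,g)$ is Hilbert--Schmidt with norm $\|K_\Lambda\|_{L^2}\|\sqrt n\|_{L^2}$, as in Proposition~\ref{prop:reduction}.

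\emph{Step 2: first-order splitting.} Take $\chi_\delta=\chi(1+\delta\theta)$ with a real $\theta\in C_c^\infty$, so that
\[
 T_{n_\delta}=(1+\delta\theta)T_n(1+\delta\theta)=T_n+\delta(\theta T_n+T_n\theta)+O(\delta^2).
\]
By standard analytic perturbation theory (Kato), the $d$ eigenvalues near $\nu_1$ are $\nu_1+\delta\beta_j+O(\delta^2)$. Here the $\beta_j$ are the eigenvalues of the compression of $\theta T_n+T_n\theta$ to the eigenspace $V=\ker(T_n-\nu_1)$. On $V$ this compression is the form
\[
 (g,h)\mapsto 2\nu_1\langle g,\theta h\rangle .
\]
Hence it suffices to choose $\theta$ so that the Hermitian form $\langle g,\theta h\rangle$ on $V$ has a simple largest eigenvalue. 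Then for small $\delta>0$ the top eigenvalue of $T_{n_\delta}$ is simple and positive, and it stays isolated from the rest of the spectrum by norm continuity. We finally set $\widetilde n=\chi_\delta^2\in C_c^\infty$, which is nonnegative for small $\delta$.

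\emph{Step 3: choice of $\theta$ (main obstacle).} We need a multiplication operator whose compression to a given finite-dimensional space $V\subset L^2(\spt\chi)$ has a simple top eigenvalue. The elements of $V$ are linearly independent functions on $\spt\chi$, since $V\subset\operatorname{ran}\sqrt n$. The map
\[
 \theta\mapsto P_V\theta P_V
\]
is linear from real $C_c^\infty$ functions to Hermitian $d\times d$ matrices.

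\emph{Claim:} this map is onto the Hermitian matrices. If a Hermitian matrix $M$ were orthogonal to its range, then $\sum_{jk}\overline{M_{jk}}\,\overline{g_j}g_k=0$ pointwise. This says the quadratic form of $M$ vanishes on the vectors $(g_1(y),\dots,g_d(y))$, and it must be combined with the fact that products $\overline{g_j}g_k$ need not span. We handle this as follows. The multiplication operators are dense in the appropriate sense, and density of $\{\theta\}$ gives all compressions $P_V\theta P_V$ for $\theta\in L^\infty(\spt\chi)$. Approximating by indicator functions, a quadratic form vanishing for all $\theta$ forces $\langle g,\mathbf 1_A g\rangle=0$ for every measurable $A$, hence $g\equiv0$ for $g\in\ker$.

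A simpler route, which I would try first, avoids surjectivity altogether. Pick $\theta=\mathbf 1$ near a point where some $g\in V$ is nonzero, so the compression is nonzero. Among the matrices $P_V\theta P_V$, which form a nonzero real subspace containing the identity, the generic one has its eigenvalues split, and we iterate on the remaining top eigenspace. Each iteration strictly reduces the multiplicity, so after at most $d-1$ perturbations we obtain a simple positive top eigenvalue $\nu_*$.
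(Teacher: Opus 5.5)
Steps~1--2 of your proposal are correct and match the paper. With $\chi_\delta=\chi(1+\delta\theta)$, your $\theta$ is the paper's $\chi_1/\chi$, and your first-order form $2\nu_1\langle g,\theta h\rangle$ on $V$ is exactly the matrix in \eqref{eq:splitting}. Iterating on the top eigenvalue is a clean way to keep positivity. The gap is in Step~3, which is the only nontrivial point of the lemma.

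Your surjectivity claim is false as stated. Because the kernel of $\cT_\Lambda$ is real, $V$ admits a real orthonormal basis. In that basis every compression $P_V\theta P_V$ with real $\theta$ is real symmetric, so for $d\geq2$ the range has real dimension at most $d(d+1)/2<d^2$. Even surjectivity onto real symmetric matrices is not justified. A symmetric $M$ annihilates the range exactly when its quadratic form vanishes on the curve $y\mapsto(g_1(y),\dots,g_d(y))$. For indefinite $M$ this is compatible with linear independence: take $g_1=\phi$, $g_2=y^2\phi$, $g_3=y\phi$, where $g_1g_2=g_3^2$. Your indicator-function argument only shows that no nonzero $g\in V$ has $\langle g,\theta g\rangle=0$ for all $\theta$. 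That rules out semidefinite $M$ and nothing more.

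The ``simpler route'' has the same hole. A nonzero subspace containing $I$ need not contain a matrix with split eigenvalues, since a priori the range could be $\R I$. Indeed $\theta\equiv1$ near $\spt\chi$ gives exactly $I$, so knowing the compression is ``nonzero'' carries no information.

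What you need, and all the iteration needs, is that not every compression is scalar. This takes two lines. Suppose that for an orthonormal basis $e_1,\dots,e_d$ of $V$ one had $\langle e_i,\theta e_j\rangle=0$ for $i\neq j$ and $\langle e_i,\theta e_i\rangle=\langle e_j,\theta e_j\rangle$ for every real $\theta\in C_c^\infty(\R)$. Then $\overline{e_i}e_j=0$ and $|e_i|^2=|e_j|^2$ almost everywhere, hence $|e_i|^4=|\overline{e_i}e_j|^2=0$, contradicting $\|e_i\|=1$.

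With a non-scalar compression, its top eigenvalue has multiplicity less than $d$. Hence for small $\delta>0$ the top eigenvalue of $T_{n_\delta}$ has multiplicity less than $d$ and stays above the rest of the spectrum by norm continuity. Your iteration then terminates as you describe. This is precisely the argument in the paper.
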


\begin{proof}
Let $n=\chi^2$ be given by Lemma~\ref{lem:positive}. If the symmetrized operator has a simple positive eigenvalue, there is nothing to prove. Otherwise, we choose a positive eigenvalue $\nu>0$ of multiplicity $\ell\geq2$ and replace $\chi$ by $\chi+t\chi_1$, where $\chi_1$ is real, smooth, and compactly supported in a set where $\chi>0$, taking $|t|$ small enough that $\chi+t\chi_1\geq0$. The resulting family $(\chi+t\chi_1)\cT_\Lambda(\chi+t\chi_1)$ depends analytically on $t$. Since the multiplier in \eqref{eq:symbol} is real and even, we can choose a real orthonormal basis $e_1,\ldots,e_\ell$ of the $\nu$-eigenspace, and differentiating at $t=0$ and using $\chi\cT_\Lambda(\chi e_j)=\nu e_j$ gives
\be\label{eq:splitting}
 \bal
 &\left.\frac{\di}{\di t}\right|_{t=0}
 \bigl\langle e_i,(\chi+t\chi_1)\cT_\Lambda((\chi+t\chi_1)e_j)\bigr\rangle\\
 &\qquad=\langle \chi_1e_i,\cT_\Lambda(\chi e_j)\rangle
 +\langle\cT_\Lambda(\chi e_i),\chi_1e_j\rangle
 =2\nu\int_\R\frac{\chi_1}{\chi}\,e_i e_j\,\di y,
 \eal
\ee
where $\chi_1/\chi$ is extended by zero outside the support of $\chi_1$.

The matrix in \eqref{eq:splitting} cannot be scalar for every $\chi_1$. Indeed, by varying $\chi_1$ we would obtain, for $i\ne j$, almost everywhere on $\{\chi>0\}$,
\[
 e_i e_j=0,\qquad e_i^2=e_j^2
 \quad\Longrightarrow\quad
 e_i^4=e_i^2e_j^2=0
 \quad\Longrightarrow\quad e_i=e_j=0,
\]
while the eigenvalue equation also implies $e_i=e_j=0$ on $\{\chi=0\}$, contradicting their normalization.
We can therefore choose $\chi_1$ for which the matrix is not scalar, and by \cite[Chapter~II, Theorem~5.4, and Chapter~VII, \S3.1]{Kato} the eigenvalues near $\nu$ can be written as
\[
 \nu_j(t)=\nu+t\sigma_j+O(t^2),\qquad j=1,\ldots,\ell,
\]
where $\sigma_1,\ldots,\sigma_\ell$ are the eigenvalues of the matrix in \eqref{eq:splitting}.
At least two of them are distinct, so for small nonzero $t$ the eigenvalues remain positive and separate into groups of total multiplicity smaller than $\ell$. If an eigenvalue in one group is still multiple, we repeat the argument, decreasing its multiplicity at each step until we obtain a profile $\widetilde n$ with a simple positive eigenvalue $\nu_*$.
\end{proof}

To apply Proposition~\ref{prop:reduction}, we still have to normalize the simple positive eigenvalue to $1$. Since the field operator depends linearly on $n$, we can do this by adjusting the amplitude of the spatial profile while keeping the kinetic growth rate fixed.

\begin{theorem}
\label{thm:mode}
Fix $\Lambda_0>0$ and $u>\gamma>0$. There exist a nonnegative $n_*\in C_c^\infty(\R)$ and a nonzero real-valued function
\[
 H\in {L^2_{y,w}\cap L^1_wL^2_y}
\]
such that
\[
 \cL_{n_*,\mu_{u,\gamma}}^{\rm loc}H=\Lambda_0H
\]
in the sense of distributions, and its electric field $E[\rho_H]$ belongs to $L^2(\R)$. Moreover, the map $E\mapsto\cT_{\Lambda_0}(n_*E)$ has the algebraically simple eigenvalue $1$.
\end{theorem}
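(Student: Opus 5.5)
We will combine Lemma~\ref{lem:simplicity}, a rescaling of the amplitude of the spatial profile, Lemma~\ref{lem:symmetrization} and Proposition~\ref{prop:reduction}.

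\emph{Normalizing the eigenvalue.} Apply Lemma~\ref{lem:simplicity} with $\Lambda=\Lambda_0$ to obtain $\widetilde n\in C_c^\infty(\R)$, $\widetilde n\geq0$, such that $\sqrt{\widetilde n}\,\cT_{\Lambda_0}\sqrt{\widetilde n}$ has a simple eigenvalue $\nu_*>0$. Set
\[
 n_*:=\nu_*^{-1}\widetilde n ,
\]
which is still nonnegative, smooth and compactly supported. Since $\sqrt{n_*}\,\cT_{\Lambda_0}\sqrt{n_*}=\nu_*^{-1}\sqrt{\widetilde n}\,\cT_{\Lambda_0}\sqrt{\widetilde n}$, the symmetrized operator for $n_*$ has $1$ as a simple eigenvalue, with the same eigenspace.

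\emph{Passing to the field operator.} By Lemma~\ref{lem:symmetrization}(ii), $1$ is then an eigenvalue of $E\mapsto\cT_{\Lambda_0}(n_*E)$, and it is algebraically simple. The correspondence \eqref{eq:ABBA} gives an explicit eigenvector: for a nonzero eigenvector $g$ of the symmetrized operator,
\[
 E:=\cT_{\Lambda_0}(\sqrt{n_*}\,g)\in L^2(\R)
\]
is nonzero and solves \eqref{eq:fixed}. We would like $g$, and hence $E$, to be real. Since $m_{\Lambda_0}$ is real and even, $\cT_{\Lambda_0}$ commutes with complex conjugation. Hence $\bar g$ is an eigenvector of the same simple eigenvalue, so the eigenspace is invariant under conjugation and contains a nonzero real vector (either $\re g$ or $\operatorname{Im} g$). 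With $g$ real, $\cT_{\Lambda_0}$ maps real functions to real functions, so $E$ is real-valued.

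\emph{Recovering the kinetic mode.} Proposition~\ref{prop:reduction} applies, because Lemma~\ref{lem:profiles} gives $\mu_{u,\gamma}\in W^{1,1}$, $\mu_{u,\gamma}'\in L^2$ and $\mu_{u,\gamma}\ge0$, while $n_*$ is bounded with compact support. It produces a nonzero $H\in L^2_{y,w}\cap L^1_wL^2_y$ such that
\[
 E'=\rho_H,\qquad (\Lambda_0+w\,\pt_y)H=-n_*E\,\mu_{u,\gamma}'
\]
in the sense of distributions. Since $E\in L^2$ is an $L^2$ primitive of $\rho_H$, uniqueness of the $L^2$ primitive gives $E=E[\rho_H]\in L^2(\R)$. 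The kinetic equation then reads
\[
 -w\,\pt_yH-n_*E[\rho_H]\,\mu_{u,\gamma}'=\Lambda_0H,
\]
which is exactly $\cL^{\rm loc}_{n_*,\mu_{u,\gamma}}H=\Lambda_0H$.

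\emph{Reality of $H$.} Formula \eqref{eq:Hhat} gives $H$ as the inverse Fourier transform of
\[
 \wh H(k,w)=-\frac{\mu'(w)}{\Lambda_0+ikw}\,\wh{nE}(k).
\]
Because $nE$ and $\mu'$ are real, $\overline{\wh H(k,w)}=\wh H(-k,w)$, so $H$ is real. The same conclusion follows from uniqueness: $\Lambda_0+w\pt_y$ is injective on $L^2$ for $\Lambda_0>0$, and the right-hand side $-n_*E\mu'$ is real, so $\overline H$ solves the same equation and must equal $H$.

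The only delicate step is ensuring a real eigenvector. This is handled by the conjugation-invariance of the simple eigenspace together with the realness and evenness of $m_{\Lambda_0}$.
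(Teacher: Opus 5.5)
Your proposal is correct and follows the paper's proof. The structure is the same: Lemma~\ref{lem:simplicity}, the normalization $n_*=\nu_*^{-1}\widetilde n$, Lemma~\ref{lem:symmetrization}(ii) for algebraic simplicity, and Proposition~\ref{prop:reduction} to reconstruct $H$. The only difference is where reality is obtained. You pick a real eigenvector $g$ of the symmetrized operator, using that $m_{\Lambda_0}$ is real and even, so that $E$ and then $H$ are automatically real. The paper instead takes the real or imaginary part of the reconstructed kinetic eigenfunction at the end. Both routes work, and yours has the small bonus of producing a real field eigenvector directly.
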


\begin{proof}
Applying Lemma~\ref{lem:simplicity} with $\Lambda=\Lambda_0$, we find a profile $\widetilde n$ such that $\sqrt{\widetilde n}\cT_{\Lambda_0}\sqrt{\widetilde n}$ has a simple positive eigenvalue $\nu_*$. By Lemma~\ref{lem:symmetrization}(ii), this is also an algebraically simple eigenvalue of $E\mapsto\cT_{\Lambda_0}(\widetilde nE)$, so we normalize it to $1$ by setting
\[
 n_*:=\nu_*^{-1}\widetilde n,
\]
and since $E\mapsto\cT_{\Lambda_0}(nE)$ depends linearly on $n$, the map $E\mapsto\cT_{\Lambda_0}(n_*E)$ has the algebraically simple eigenvalue $1$. By Proposition~\ref{prop:reduction}, we reconstruct a nonzero eigenfunction of $\cL_{n_*,\mu_{u,\gamma}}^{\rm loc}$ in ${L^2_{y,w}\cap L^1_wL^2_y}$ with eigenvalue $\Lambda_0$, whose electric field belongs to $L^2(\R)$.
Finally, since the kinetic operator and $\Lambda_0$ are real, at least one of the real and imaginary parts of the eigenfunction is nonzero and is a real-valued eigenfunction.
\end{proof}

\section{Restoring the similarity drift}
\label{sec:drift}

We now include the order-$\eps$ similarity drift in \eqref{eq:scaled}. We first prove convergence in operator norm of the field operators, so the simple eigenvalue persists near $1$, and then adjust the amplitude of the spatial profile to restore the field eigenvalue to $1$ while keeping the rescaled kinetic eigenvalue fixed for small $\eps$.
{For the persistence of instability under the self-similar drift in the Euler equation, see \cite{ABCDLGJK}.}
Writing $\Lambda=\eps\lambda$, we define
\[
 \mathcal G_\eps
 :=-w\,\pt_y+\eps\mathcal D_{b,c}
 =-w\,\pt_y+\eps b\,y\,\pt_y+\eps c\,w\,\pt_w-\eps a,
\]
so that the eigenvalue equation becomes
\[
 (\Lambda-\mathcal G_\eps)H=-n(y)E(y)\mu'(w).
\]
We solve this equation using the resolvent of $\mathcal G_\eps$, which we compute along the characteristics in the following lemma.

\begin{lemma}
\label{lem:resolvent}
Let $\eps>0$, $b=c+1$ and $a=-2-c$. The following properties hold.
\begin{itemize}
\item[(i)] The characteristic flow is
\be\label{eq:flow}
 w_s=e^{\eps cs}w,
 \qquad
 y_s=e^{\eps bs}\lf(
 y-w\,\frac{1-e^{-\eps s}}{\eps}
 \ri).
\ee
The operators
\be\label{eq:transport}
 e^{s\mathcal G_\eps}\Psi(y,w):=e^{-\eps as}\Psi(y_s,w_s),\qquad s\geq0,
\ee
form a strongly continuous semigroup on ${L^2_{y,w}}$. Its generator agrees with $\mathcal G_\eps$ on $C_c^\infty(\R^2)$ and has domain
\[
 D(\mathcal G_\eps)
 =\bigl\{H\in {L^2_{y,w}}:\mathcal G_\eps H\in {L^2_{y,w}}
 \text{ in the sense of distributions}\bigr\}.
\]
Moreover,
\be\label{eq:semigroup}
 \|e^{s\mathcal G_\eps}\|_{L^2\to L^2}
 =e^{-\eps(a+(b+c)/2)s}=e^{3\eps s/2}.
\ee
\item[(ii)] For $\Lambda>3\eps/2$ and $\Psi\in {L^2_{y,w}}$, the resolvent is
\be\label{eq:resolvent}
 (\Lambda-\mathcal G_\eps)^{-1}\Psi(y,w)
 =\int_0^\infty e^{-(\Lambda+\eps a)s}\Psi(y_s,w_s)\,\di s,
\ee
where the integral converges in ${L^2_{y,w}}$, and
\be\label{eq:resolvent norm}
 \|(\Lambda-\mathcal G_\eps)^{-1}\Psi\|_{L^2}
 \leq\frac{\|\Psi\|_{L^2}}{\Lambda-3\eps/2}.
\ee
\end{itemize}
\end{lemma}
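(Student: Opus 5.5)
The plan is to verify everything by direct computation along the characteristics of the first-order operator $\mathcal G_\eps=-w\pt_y+\eps b\,y\pt_y+\eps c\,w\pt_w-\eps a$. The resolvent formula will then follow from the standard Laplace-transform representation for $C_0$-semigroups.

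For (i), I would solve the characteristic system
\[
 \frac{\di y_s}{\di s}=-w_s+\eps b\,y_s,\qquad \frac{\di w_s}{\di s}=\eps c\,w_s,
\]
with data $(y,w)$ at $s=0$. This gives $w_s=e^{\eps cs}w$. Variation of constants then gives
\[
 y_s=e^{\eps bs}\Bigl(y-w\int_0^se^{\eps(c-b)r}\,\di r\Bigr),
\]
and $b-c=1$ turns this into \eqref{eq:flow}.
\begin{itemize}
\item \emph{Semigroup property.} The map $\Phi_s(y,w)=(y_s,w_s)$ is a linear flow, so $\Phi_{s+r}=\Phi_s\circ\Phi_r$. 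Together with the scalar factor $e^{-\eps as}$, this gives the semigroup law for \eqref{eq:transport}.
\item \emph{Norm.} $\Phi_s$ is linear and triangular with $\pt y_s/\pt y=e^{\eps bs}$ and $\pt w_s/\pt w=e^{\eps cs}$, so its Jacobian is $e^{\eps(b+c)s}$. A change of variables gives $\|\Psi\circ\Phi_s\|_{L^2}=e^{-\eps(b+c)s/2}\|\Psi\|_{L^2}$ with equality for every $\Psi$. Hence the operator norm is exactly $e^{-\eps(a+(b+c)/2)s}$, and \eqref{eq:scaling} gives $a+(b+c)/2=-3/2$.
\item \emph{Strong continuity.} I would check it first on $C_c^\infty$ by uniform continuity, then extend to $L^2$ by density, using the uniform norm bound on $[0,1]$.
\item \emph{Generator on test functions.} Differentiating $e^{-\eps as}\Psi(\Phi_s(y,w))$ at $s=0$ gives $\mathcal G_\eps\Psi$ for $\Psi\in C_c^\infty(\R^2)$, with convergence in $L^2$ because the supports stay in a fixed compact set.
\end{itemize}

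The main obstacle is identifying the generator's domain with the maximal distributional domain. The inclusion of the generator domain into the maximal domain is immediate: if $s^{-1}(e^{s\mathcal G_\eps}H-H)$ converges in $L^2$, pair it with a test function and use the formula on $C_c^\infty$ through the formal adjoint. For the reverse inclusion I would use duality. The formal adjoint $\mathcal G_\eps^*$ is again a transport operator with linear coefficients. It generates the adjoint semigroup $(e^{s\mathcal G_\eps})^*\varphi=e^{\eps(b+c-a)s}\varphi\circ\Phi_s^{-1}$, and this semigroup preserves $C_c^\infty(\R^2)$. Now take $H\in L^2$ with $\mathcal G_\eps H\in L^2$ distributionally and $\varphi\in C_c^\infty$. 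Then
\[
 \langle e^{s\mathcal G_\eps}H-H,\varphi\rangle=\int_0^s\langle H,\mathcal G_\eps^*(e^{r\mathcal G_\eps})^*\varphi\rangle\,\di r=\int_0^s\langle e^{r\mathcal G_\eps}\mathcal G_\eps H,\varphi\rangle\,\di r .
\]
By density of test functions this gives $e^{s\mathcal G_\eps}H-H=\int_0^se^{r\mathcal G_\eps}\mathcal G_\eps H\,\di r$ in $L^2$. Dividing by $s$ and using strong continuity puts $H$ in the generator domain. As an alternative, a Friedrichs mollification argument would also work, since the commutators with linear coefficients are bounded.

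For (ii), the growth bound from \eqref{eq:semigroup} is $3\eps/2$. For $\Lambda>3\eps/2$, the standard Hille--Yosida Laplace representation $(\Lambda-\mathcal G_\eps)^{-1}\Psi=\int_0^\infty e^{-\Lambda s}e^{s\mathcal G_\eps}\Psi\,\di s$ holds, and the integral converges absolutely in $L^2$. Inserting \eqref{eq:transport} gives \eqref{eq:resolvent}. Estimating the integrand by $e^{-(\Lambda-3\eps/2)s}\|\Psi\|_{L^2}$ and integrating in $s$ gives \eqref{eq:resolvent norm}.
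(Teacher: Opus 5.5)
Your proposal is correct and follows essentially the paper's route: you solve the characteristics, compute the Jacobian $e^{\eps(b+c)s}$, identify the domain by testing against $C_c^\infty$ functions to obtain $e^{s\mathcal G_\eps}H-H=\int_0^s e^{r\mathcal G_\eps}\mathcal G_\eps H\,\di r$, and use the Laplace representation of the resolvent, which the paper verifies directly on $C_c^\infty(\R^2)$ and extends by closedness rather than quoting the semigroup theorem. There is one small slip: since $\Phi_s^{-1}$ has Jacobian $e^{-\eps(b+c)s}$, the adjoint semigroup is $(e^{s\mathcal G_\eps})^*\varphi=e^{-\eps(a+b+c)s}\varphi\circ\Phi_s^{-1}$ rather than $e^{\eps(b+c-a)s}\varphi\circ\Phi_s^{-1}$, and it is this factor that makes its derivative the formal adjoint $\mathcal G_\eps^*\varphi=w\pt_y\varphi-\eps b\,y\pt_y\varphi-\eps c\,w\pt_w\varphi-\eps(a+b+c)\varphi$ used in your duality identity.
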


\begin{proof}
We solve the characteristic equations
\[
 \dot w_s=\eps c w_s,\qquad
 \dot y_s=\eps b y_s-w_s,\qquad (y_0,w_0)=(y,w),
\]
and, using $b-c=1$, we obtain \eqref{eq:flow} by integration. The Jacobian of the map $(y,w)\mapsto(y_s,w_s)$ is $e^{\eps(b+c)s}$, and therefore
\[
 \|e^{-\eps as}\Psi(y_s,w_s)\|_{L^2_{y,w}}^2
 =e^{-\eps(2a+b+c)s}\|\Psi\|_{L^2}^2
 =e^{3\eps s}\|\Psi\|_{L^2}^2.
\]
The flow property implies the semigroup property, and for smooth compactly supported $\Psi$ the formula \eqref{eq:transport} is continuous in $L^2$ at $s=0$, with derivative $\mathcal G_\eps\Psi$. Using density and \eqref{eq:semigroup}, we then conclude that the semigroup is strongly continuous on $L^2$.
{If $H,\mathcal G_\eps H\in L^2$,} testing \eqref{eq:transport} against smooth compactly supported functions gives
\[
 e^{s\mathcal G_\eps}H-H
 =\int_0^s e^{r\mathcal G_\eps}(\mathcal G_\eps H)\,\di r,
\]
so, dividing by $s$ and using strong continuity, we deduce that {$H\in D(\mathcal G_\eps)$}.
{Conversely, if $H\in D(\mathcal G_\eps)$, the quotient $(e^{s\mathcal G_\eps}H-H)/s$ converges in $L^2$ as $s\to0^+$, and by testing against smooth compactly supported functions we see that its limit equals $\mathcal G_\eps H$ in the sense of distributions.}

For (ii), we first take $\Psi\in C_c^\infty(\R^2)$. Integrating the derivative of $e^{-\Lambda s}e^{s\mathcal G_\eps}\Psi$, we find
\[
 (\Lambda-\mathcal G_\eps)
 \int_0^R e^{-\Lambda s}e^{s\mathcal G_\eps}\Psi\,\di s
 =\Psi-e^{-\Lambda R}e^{R\mathcal G_\eps}\Psi.
\]
When $\Lambda>3\eps/2$, estimate \eqref{eq:semigroup} shows that the last term tends to zero and the integral converges in $L^2$, so the closedness of the generator implies that the limit solves $(\Lambda-\mathcal G_\eps)H=\Psi$. To check uniqueness, we observe that $(\Lambda-\mathcal G_\eps)H=0$ implies $e^{s\mathcal G_\eps}H=e^{\Lambda s}H$, which forces $H=0$ by \eqref{eq:semigroup}. Finally, we estimate
\[
 \left\|\int_0^\infty e^{-\Lambda s}e^{s\mathcal G_\eps}\Psi\,\di s\right\|_{L^2}
 \leq\|\Psi\|_{L^2}\int_0^\infty e^{-(\Lambda-3\eps/2)s}\,\di s
 =\frac{\|\Psi\|_{L^2}}{\Lambda-3\eps/2},
\]
and then use this bound and the closedness of the generator to extend \eqref{eq:resolvent} to every $\Psi\in {L^2_{y,w}}$ by approximation with smooth compactly supported functions.
\end{proof}

To retain the simple field eigenvalue when $\eps$ is small, we need convergence in operator norm, so we derive the kernels of the field operators from \eqref{eq:resolvent}. For $s>0$, define
\[
 d_\eps(s):=\frac{1-e^{-\eps s}}{\eps},
\]
and, for compactly supported $\varphi\in L^2(\R)$, set
\be\label{eq:Teps}
 (\cT_{\eps,\Lambda}\varphi)(y)
 :=\int_\R K_{\eps,\Lambda}(y,z)\,\varphi(z)\,\di z,
\ee
and
\be\label{eq:kernel}
 K_{\eps,\Lambda}(y,z)
 :=-\int_0^\infty
 e^{-(\Lambda+\eps(a+b+c))s}\,
 \mu\lf(
 \frac{e^{\eps cs}y-e^{-\eps s}z}{d_\eps(s)}
 \ri)\,\di s.
\ee
For $\eps=0$, define
\be\label{eq:K0}
 K_{0,\Lambda}(y,z)
 :=-\int_0^\infty e^{-\Lambda s}\,
 \mu\lf(\frac{y-z}{s}\ri)\,\di s.
\ee
\begin{lemma}
\label{lem:kernels}
Let $\mu=\mu_{u,\gamma}$, $n\in C_c^\infty(\R)$, and let $J\subset(0,\infty)$ be a compact interval.
For all sufficiently small $\eps>0$ and every $\Lambda\in J$, the following properties hold.
\begin{itemize}
\item[(i)] The operators
\[
 E\longmapsto\cT_{\eps,\Lambda}(nE)
\]
are Hilbert-Schmidt on $L^2(\R)$. At $\eps=0$, we have $\cT_{0,\Lambda}=\cT_\Lambda$.
\item[(ii)] We have
\be\label{eq:Klimit}
 \sup_{\Lambda\in J}
 \|\cT_{\eps,\Lambda}(n\,\cdot)-\cT_{0,\Lambda}(n\,\cdot)\|_{L^2\to L^2}
 \longrightarrow0
 \qquad\text{as }\eps\to0^+.
\ee
\item[(iii)] For every compactly supported $\varphi\in L^2(\R)$, the function
\[
 H:=-(\Lambda-\mathcal G_\eps)^{-1}\bigl(\varphi(y)\mu'(w)\bigr)
\]
belongs to ${L^2_{y,w}}\cap L^1_wL^2_y$, and
\[
 \cT_{\eps,\Lambda}\varphi\in L^2(\R),\qquad
 \pt_y(\cT_{\eps,\Lambda}\varphi)=\rho_H
\]
in the sense of distributions. In particular, for $\varphi=nE$ and $E\in L^2(\R)$, $E'=\rho_H$ if and only if $E=\cT_{\eps,\Lambda}(nE)$.
\end{itemize}
\end{lemma}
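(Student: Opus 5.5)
I would start from the resolvent formula \eqref{eq:resolvent} with $\Psi=-\varphi(y)\mu'(w)$. This expresses $H$ as an integral over the characteristics \eqref{eq:flow}, from which both $\rho_H$ and a primitive of it can be computed explicitly.

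\emph{Step 1: the density of $H$.} For each $s$, integrate $e^{-(\Lambda+\eps a)s}\varphi(y_s)\mu'(w_s)$ in $w$. Substitute $z:=y_s e^{-\eps bs}=y-w d_\eps(s)$, so that
\[
 w=\frac{y-z}{d_\eps(s)}, \qquad \di w=\frac{\di z}{d_\eps(s)}, \qquad w_s=e^{\eps cs}\,\frac{y-z}{d_\eps(s)}.
\]
The integrand becomes $\varphi(e^{\eps bs}z)\,\mu'\bigl(e^{\eps cs}(y-z)/d_\eps(s)\bigr)/d_\eps(s)$.

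Next, note that
\[
 \pt_y\bigl[\mu(e^{\eps cs}(y-z)/d_\eps)\bigr]=\frac{e^{\eps cs}}{d_\eps}\,\mu'(\cdots).
\]
Rescaling $z\mapsto e^{-\eps s}z'$, using $b=c+1$, then shows that $\rho_H$ is exactly $\pt_y$ of the kernel expression \eqref{eq:kernel} applied to $\varphi$. The exponents $e^{-\eps(a+b+c)s}$ should come out of collecting the factors $e^{-\eps as}$, $e^{-\eps cs}$, and $e^{-\eps bs}$ from the Jacobian. To justify differentiating under the integrals, I would first do the computation for $\varphi\in C_c^\infty$ and pass to the limit by density, testing against smooth functions.

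\emph{Step 2: membership in the function spaces.} The bound $H\in L^2_{y,w}$ follows from \eqref{eq:resolvent norm}.

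For $L^1_wL^2_y$, apply Minkowski's inequality in the $s$-integral. For fixed $w$, the map $y\mapsto y_s$ is affine with slope $e^{\eps bs}$, so
\[
 \|\varphi(y_s)\|_{L^2_y}=e^{-\eps bs/2}\|\varphi\|_{L^2}.
\]
Integrating $|\mu'(w_s)|$ in $w$ gives a factor $e^{-\eps cs}\|\mu'\|_{L^1}$. The resulting integrand decays like $e^{-(\Lambda-C\eps)s}$, which is integrable for $\eps$ small and $\Lambda\in J$.

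For $\cT_{\eps,\Lambda}\varphi\in L^2$, I would bound the kernel directly. Using $\mu\le C(1+|w|)^{-4}$, one gets
\[
 |K_{\eps,\Lambda}(y,z)|\lesssim \int_0^\infty e^{-\Lambda s/2}\Bigl(1+\frac{|y-z|}{s}\Bigr)^{-4}\,\di s
\]
for $z$ in a compact set and $\eps$ small. The argument of $\mu$ in \eqref{eq:kernel} differs from $(y-z)/s$ by multiplicative factors $e^{O(\eps s)}$ and an additive error $O(\eps|y|)$, which must be absorbed carefully. This bound decays like $|y|^{-1}$ or faster at large $|y|$, so it is square integrable. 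This gives both the $L^2$ claim and the Hilbert–Schmidt property in (i), since $n$ has compact support.

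The identity $\cT_{0,\Lambda}=\cT_\Lambda$ should be checked in Fourier. Taking the transform of $-\int_0^\infty e^{-\Lambda s}\mu(x/s)\,\di s$ gives $-\int_0^\infty s e^{-\Lambda s}\wh\mu(ks)\,\di s$, which matches \eqref{eq:Laplace}.

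The final equivalence in (iii) follows from uniqueness of the $L^2$ primitive: $E$ and $\cT_{\eps,\Lambda}(nE)$ are both $L^2$ primitives of $\rho_H$, so they coincide.

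\emph{Step 3: operator-norm convergence (ii).} This is the main obstacle. I would bound the Hilbert–Schmidt norm of $(K_{\eps,\Lambda}-K_{0,\Lambda})(y,z)\,n(z)$ and apply dominated convergence.

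Pointwise convergence of the kernels is clear. For the dominating function, the difficulty is at large $|y|$: the argument $e^{\eps cs}y/d_\eps(s)$ grows like $\eps e^{\eps(c+1)s}y$ for large $s$, rather than like $y/s$. I would split the $s$-integral at $s=\eps^{-1/2}$.
\begin{itemize}
\item On $s\le\eps^{-1/2}$, the argument is comparable to $(y-z)/s$ uniformly, so the $\eps=0$ bound applies.
\item On $s>\eps^{-1/2}$, the factor $e^{-\Lambda s/2}\le e^{-\Lambda\eps^{-1/2}/2}$ is tiny. The $y$-decay there comes from $\mu\lesssim |{\rm arg}|^{-4}$ with $|{\rm arg}|\gtrsim|y|/s$ for $|y|$ large relative to $|z|$.
\end{itemize}
Together these give a dominating function of the form $C(1+|y|)^{-1}$, uniform in $\Lambda\in J$ and $z\in\spt n$. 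Dominated convergence then yields \eqref{eq:Klimit}, with the supremum over $J$ handled by the uniform bounds and continuity in $\Lambda$.
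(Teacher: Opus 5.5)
Your proposal is correct, but for (i)--(ii) it follows a different route from the paper; part (iii) is essentially the paper's argument.

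In (iii) you, like the paper, change variables along the characteristics, bound $H$ in $L^1_wL^2_y$ by Minkowski, approximate by smooth $\varphi$, and conclude by uniqueness of the $L^2$ primitive.

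For (i)--(ii) the two routes compare as follows.

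\emph{The paper's route.} It never estimates the kernel pointwise. For fixed $s$ and $z$ it computes the $L^2_y$ norm of the $s$-slice of $K_{\eps,\Lambda}(\cdot,z)$ exactly by a dilation. This gives \eqref{eq:dilation} and the $\eps$-independent majorant $s^{1/2}e^{-(\min J)s/2}\|\mu\|_{L^2}$. Continuity of translations and dilations in $L^2$ then gives convergence of each slice, uniformly in $(z,\Lambda)\in I\times J$. Minkowski's inequality and dominated convergence in $s$ yield $\sup_{z,\Lambda}\|K_{\eps,\Lambda}(\cdot,z)-K_{0,\Lambda}(\cdot,z)\|_{L^2_y}\to0$, hence Hilbert--Schmidt convergence. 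This needs only $\mu\in L^2$ and is insensitive to the sign of $c$. It is also exactly the estimate reused in Lemma~\ref{lem:KG}(i) for a general profile $G\in L^2_{y,w}$, where no pointwise decay is available.

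\emph{Your route.} You use $\mu\lesssim\langle w\rangle^{-4}$ and a lower bound on the argument of $\mu$ to get $|K_{\eps,\Lambda}(y,z)|\lesssim(1+|y|)^{-4}$ for $z\in\spt n$. You then apply dominated convergence in $(y,z)$ and get $\cT_{\eps,\Lambda}\varphi\in L^2$ by Young's inequality. This works and gives more information: the kernel decays pointwise, so $\cT_{\eps,\Lambda}\varphi$ is also bounded. However, it depends on the decay of $\mu$ and on the geometry of the flow, and it does not transfer to the $L^2$ profiles needed later.

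Some corrections and simplifications:
\begin{itemize}
\item In Step~1 the second substitution should be $z=e^{-\eps bs}z'$, not $z=e^{-\eps s}z'$. The factor $e^{-\eps s}=e^{\eps(c-b)s}$ is what appears after it.
\item For large $s$ the argument grows like $\eps e^{\eps cs}|y|$, not $\eps e^{\eps(c+1)s}|y|$. In any case this is not an obstacle, since a larger argument only makes $\mu$ smaller.
\item The split at $s=\eps^{-1/2}$ is unnecessary. For $c\ge0$ one has $d_\eps(s)\le s$ and $e^{\eps cs}\ge1$, so the argument is at least $(|y|-|z|)/s$ for every $s$. For $c<0$ you would need a separate check, which the paper's $L^2$ scaling avoids.
\item Uniformity in $\Lambda\in J$ is cleanest if you replace $e^{-\Lambda s}$ by $e^{-(\min J)s}$ in a majorant of $\sup_{\Lambda\in J}|K_{\eps,\Lambda}-K_{0,\Lambda}|(y,z)$, and then apply dominated convergence to that majorant.
\item In the density argument of (iii), take the smooth approximants supported in a fixed compact set, since your kernel bound is uniform only for $z$ in a compact set.
\end{itemize}
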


\begin{proof}
We first prove (i) and (ii). The operator $E\mapsto\cT_{\eps,\Lambda}(nE)$ has kernel $K_{\eps,\Lambda}(y,z)n(z)$. Let $I$ be a fixed compact interval containing $\spt n$, and set
\[
 \beta_\eps(s):=e^{-\eps bs},
\]
so that, to estimate the kernel in $L^2_y$, we can write the argument of $\mu$ in \eqref{eq:kernel} as $\frac{e^{\eps cs}}{d_\eps(s)}(y-\beta_\eps(s)z)$. For fixed $s>0$ and $z\in I$, we make a change of variables and find
\begin{align}
 &\lf\|
 e^{-(\Lambda+\eps(a+b+c))s}\,
 \mu\bigl(\tfrac{e^{\eps cs}}{d_\eps(s)}({y}-\beta_\eps(s)z)\bigr)
 \ri\|_{L^2_y}\notag\\
 &\hspace{25mm}
 =d_\eps(s)^{1/2}\,
 e^{-[\Lambda+\eps(a+b+3c/2)]s}\,\|\mu\|_{L^2},
 \label{eq:dilation}
\end{align}
which, since $d_\eps(s)\leq s$, is bounded for $\Lambda\in J$ and all sufficiently small $\eps$ by
\[
 s^{1/2}e^{-(\min J)s/2}\,\|\mu\|_{L^2},
\]
which is integrable on $(0,\infty)$. For every fixed $s>0$, $e^{\eps cs}/d_\eps(s)\to s^{-1}$ and $\beta_\eps(s)\to1$. Since translations and dilations are continuous in $L^2$, the integrand converges in $L^2_y$, uniformly for $(z,\Lambda)\in I\times J$. Using Minkowski's inequality and dominated convergence, we obtain
\[
 \sup_{(z,\Lambda)\in I\times J}
 \|K_{\eps,\Lambda}(\cdot,z)-K_{0,\Lambda}(\cdot,z)\|_{L^2_y}
 \longrightarrow0,
\]
and, since $n$ is bounded and supported in $I$, we also have
\[
 \sup_{\Lambda\in J}
 \int_I\int_\R
 |K_{\eps,\Lambda}(y,z)-K_{0,\Lambda}(y,z)|^2\,
 |n(z)|^2\,\di y\,\di z\longrightarrow0.
\]
This convergence implies \eqref{eq:Klimit}, while applying \eqref{eq:dilation} to each kernel separately gives $K_{\eps,\Lambda}(y,z)n(z)\in {L^2_{y,z}}$. Hence each operator is Hilbert-Schmidt and, in particular, compact. At $\eps=0$, the Fourier transform in $y-z$ is
\[
 \widehat{K_{0,\Lambda}(\cdot,0)}(k)
 =-\int_0^\infty s e^{-\Lambda s}\widehat\mu(ks)\,\di s
 =m_\Lambda(k),
\]
so $\cT_{0,\Lambda}=\cT_\Lambda$.

To prove (iii), we first take $\varphi\in C_c^\infty(\R)$. In \eqref{eq:resolvent}, we change variables in the integral with respect to $w$ using
\[
 z=y_s=e^{\eps bs}\bigl(y-d_\eps(s)w\bigr),\qquad
 w_s=\frac{e^{\eps cs}y-e^{-\eps s}z}{d_\eps(s)},\qquad
 \left|\frac{\pt w}{\pt z}\right|=\frac{e^{-\eps bs}}{d_\eps(s)}.
\]
Since
\[
 \pt_y\bigl[\mu(w_s(y,z))\bigr]
 =\frac{e^{\eps cs}}{d_\eps(s)}\mu'(w_s(y,z)),
\]
we obtain
\be\label{eq:rho H}
 \bal
 \rho_H(y)
 &=-\int_0^\infty
 \frac{e^{-(\Lambda+\eps(a+b))s}}{d_\eps(s)}
 \int_\R\varphi(z)\mu'\!\left(
 \frac{e^{\eps cs}y-e^{-\eps s}z}{d_\eps(s)}\right)\,\di z\,\di s\\
 &=\pt_y\int_\R K_{\eps,\Lambda}(y,z)\varphi(z)\,\di z
 =\pt_y(\cT_{\eps,\Lambda}\varphi)(y)
 \eal
\ee
in the sense of distributions. By Lemma~\ref{lem:resolvent},
\be\label{eq:H L2}
 \|H\|_{{L^2_{y,w}}}
 \leq\frac{\|\varphi\|_{L^2}\|\mu'\|_{L^2}}
 {\Lambda-3\eps/2},
\ee
while taking the $L^1_wL^2_y$ norm in \eqref{eq:resolvent} gives
\be\label{eq:H mixed}
 \|H\|_{L^1_wL^2_y}
 \leq\|\varphi\|_{L^2}\|\mu'\|_{L^1}
 \int_0^\infty e^{-[\Lambda+\eps(a+b/2+c)]s}\,\di s,
\ee
where, after decreasing $\eps$ if necessary, the integral is finite uniformly for $\Lambda\in J$.
For a general compactly supported $\varphi\in L^2$, we then choose smooth approximations supported in a fixed compact interval and use \eqref{eq:H L2} and \eqref{eq:H mixed} to deduce convergence of the reconstructed functions in $L^2\cap L^1_wL^2_y$, and hence of their densities in $L^2_y$. The electric fields also converge in $L^2_y$ by \eqref{eq:dilation}, so we can pass to the limit in \eqref{eq:rho H}.
Finally, if $\varphi=nE$ and $E'=\rho_H$, then
\[
 \pt_y\bigl(E-\cT_{\eps,\Lambda}(nE)\bigr)=0,
\]
so the difference must vanish because it belongs to $L^2(\R)$, while the converse follows directly by differentiation.
\end{proof}

{
The following lemma allows us to retain the simple eigenvalue when we restore the similarity drift.

\begin{lemma}\label{lem:persistence}
Let $T_\eps$, $\eps\geq0$, be compact operators on a complex Hilbert space such that $\|T_\eps-T_0\|\to0$ as $\eps\to0^+$. If $\nu_0\ne0$ is an algebraically simple eigenvalue of $T_0$, then every sufficiently small disk centered at $\nu_0$ contains, for all sufficiently small $\eps$, exactly one eigenvalue $\nu_\eps$ of $T_\eps$. This eigenvalue is algebraically simple and $\nu_\eps\to\nu_0$.
\end{lemma}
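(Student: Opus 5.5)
The plan is to use the Riesz projection and standard analytic perturbation theory for isolated eigenvalues. Since $T_0$ is compact and $\nu_0\ne0$, the spectrum of $T_0$ near $\nu_0$ consists of isolated eigenvalues of finite algebraic multiplicity, accumulating only at $0$. So we can choose $r_0\in(0,|\nu_0|/2)$ such that the closed disk $\overline{D(\nu_0,r_0)}$ meets $\sigma(T_0)$ only at $\nu_0$. Fix any $r\in(0,r_0]$ and let $\Gamma=\partial D(\nu_0,r)$. By compactness of $\Gamma$ and continuity of the resolvent on the resolvent set,
\[
 M:=\sup_{\zeta\in\Gamma}\|(\zeta-T_0)^{-1}\|<\infty.
\]

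The first step is to show that $\Gamma$ stays in the resolvent set of $T_\eps$ for small $\eps$. I would use the Neumann series in the identity
\[
 \zeta-T_\eps=(\zeta-T_0)\bigl(I-(\zeta-T_0)^{-1}(T_\eps-T_0)\bigr).
\]
Once $\|T_\eps-T_0\|<1/(2M)$, this shows that $\zeta-T_\eps$ is invertible for all $\zeta\in\Gamma$, with $\|(\zeta-T_\eps)^{-1}\|\leq 2M$. The same series gives the bound
\[
 \|(\zeta-T_\eps)^{-1}-(\zeta-T_0)^{-1}\|\leq 2M^2\|T_\eps-T_0\|,
\]
uniformly for $\zeta\in\Gamma$.

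The second step is to compare the Riesz projections
\[
 P_\eps=\frac1{2\pi i}\oint_\Gamma(\zeta-T_\eps)^{-1}\,\di\zeta .
\]
Integrating the resolvent bound gives $\|P_\eps-P_0\|\leq rM\cdot 2M\|T_\eps-T_0\|\to0$. Algebraic simplicity of $\nu_0$ means that $\operatorname{rank}P_0=1$. I then invoke the standard fact (Kato, Chapter~I, \S4.6) that two projections with $\|P-Q\|<1$ have equal rank. This is the step needing most care, since it holds in infinite dimensions without extra assumptions. If I prefer to avoid citing it, I would argue directly. The map $P_\eps$ restricted to $\operatorname{Ran}P_0$ is injective once $\|P_\eps-P_0\|<1$, so $\operatorname{rank}P_\eps\geq1$. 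Symmetrically, $P_0$ restricted to $\operatorname{Ran}P_\eps$ is injective, so $\operatorname{rank}P_\eps\leq1$. Hence $\operatorname{rank}P_\eps=1$ for small $\eps$.

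The last step draws the conclusion. By the Riesz functional calculus, $\operatorname{Ran}P_\eps$ is the sum of the generalized eigenspaces of $T_\eps$ for the eigenvalues inside $D(\nu_0,r)$. There are only finitely many such eigenvalues, since $T_\eps$ is compact and $0\notin\overline{D(\nu_0,r)}$. Each eigenvalue contributes at least one dimension. Rank one therefore forces exactly one eigenvalue $\nu_\eps$ in the disk, with algebraic multiplicity one. Because this holds for every $r\in(0,r_0]$ once $\eps$ is small enough (depending on $r$), we obtain $\nu_\eps\to\nu_0$. The same argument applies to every sufficiently small disk centered at $\nu_0$, as claimed in the statement. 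I expect no real obstacle. The only points needing care are the uniformity of the resolvent bound on $\Gamma$ and the rank comparison of nearby projections.
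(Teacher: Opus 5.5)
Your proof is correct and follows essentially the same route as the paper. Both use the Neumann series for the resolvent on a small circle around $\nu_0$, norm convergence of the Riesz projections, the two-sided injectivity argument giving $\operatorname{rank}P_\eps=\operatorname{rank}P_0=1$, and shrinking the circle to obtain $\nu_\eps\to\nu_0$. The only differences are cosmetic: you factor $\zeta-T_\eps$ on the other side, and you record explicit constants such as $\|P_\eps-P_0\|\leq 2rM^2\|T_\eps-T_0\|$.
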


\begin{proof}
We choose $0<r<|\nu_0|$ such that the closed disk $|z-\nu_0|\leq r$ contains no other point of $\sigma(T_0)$.
{
For $|z-\nu_0|=r$, we write
\[
 zI-T_\eps
 =\bigl[I-(T_\eps-T_0)(zI-T_0)^{-1}\bigr](zI-T_0).
\]
Since $(zI-T_0)^{-1}$ is uniformly bounded on this circle and $\|T_\eps-T_0\|\to0$, the Neumann series gives, for all sufficiently small $\eps$,
\[
 (zI-T_\eps)^{-1}
 =(zI-T_0)^{-1}\bigl[I-(T_\eps-T_0)(zI-T_0)^{-1}\bigr]^{-1}
 \longrightarrow (zI-T_0)^{-1}
\]
uniformly on $|z-\nu_0|=r$.

}
Thus the Riesz projections (see \cite[Chapter~VII]{Conway}) satisfy
\[
 P_\eps:=\frac{1}{2\pi i}\int_{|z-\nu_0|=r}(zI-T_\eps)^{-1}\,\di z,
 \qquad \|P_\eps-P_0\|\longrightarrow0,
\]
where the circle is oriented counterclockwise. For small $\eps$, we have $\|P_\eps-P_0\|<1$.
If $x\in\operatorname{Ran}P_0$ and $P_\eps x=0$, then
\[
 \|x\|=\|(P_0-P_\eps)x\|\leq\|P_0-P_\eps\|\,\|x\|
 \quad\Longrightarrow\quad x=0,
\]
so $P_\eps:\operatorname{Ran}P_0\to\operatorname{Ran}P_\eps$ is injective.
Exchanging $P_0$ and $P_\eps$ gives the reverse inequality between the dimensions, so
\[
 \operatorname{rank}P_\eps=\operatorname{rank}P_0=1.
\]
Since this rank is the sum of the algebraic multiplicities inside the circle, there is exactly one eigenvalue $\nu_\eps$ there, and it is algebraically simple. Repeating the argument with arbitrarily smaller circles gives $\nu_\eps\to\nu_0$.
\end{proof}

}

\begin{theorem}
\label{thm:drift}
Fix $b=c+1$, $a=-2-c$, $\Lambda_0>0$, and $u>\gamma>0$. There exist $\eps_0>0$ and smooth nonnegative profiles $n_\eps\in C_c^\infty(\R)$, $0<\eps<\eps_0$, converging in $C_c^\infty$ to a nonzero profile $n_*$, such that the linearization of the self-similar Vlasov-Poisson equation around
\be\label{eq:Feps}
 \overline F_\eps(\xi,w)
 :=\eps^{-2}\,n_\eps(\xi/\eps)\,\mu_{u,\gamma}(w)
\ee
has the real unstable eigenvalue
\be\label{eq:lambda}
 \lambda_\eps=\frac{\Lambda_0}{\eps}>0.
\ee
Moreover, an eigenfunction may be chosen real-valued and satisfies
\[
 h_\eps\in {L^2_{\xi,w}}\cap L^1_wL^2_\xi,
 \qquad
 E[\rho_{h_\eps}]\in L^2(\R).
\]
Also, the field operator $E\mapsto\cT_{\eps,\Lambda_0}(n_\eps E)$ has the algebraically simple eigenvalue $1$.
\end{theorem}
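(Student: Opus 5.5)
We start from Theorem~\ref{thm:mode} applied with the given $\Lambda_0$ and $u>\gamma$, which provides $n_*\in C_c^\infty(\R)$, $n_*\geq0$, such that $E\mapsto\cT_{\Lambda_0}(n_*E)$ has the algebraically simple eigenvalue $1$. By Lemma~\ref{lem:kernels}(i)--(ii), with $J=\{\Lambda_0\}$ and $n=n_*$, the operators $T_\eps:=\cT_{\eps,\Lambda_0}(n_*\,\cdot)$ are compact on $L^2(\R)$ and converge in operator norm to $T_0=\cT_{\Lambda_0}(n_*\,\cdot)$. Lemma~\ref{lem:persistence} then gives, for small $\eps$, a unique eigenvalue $\nu_\eps$ of $T_\eps$ near $1$, which is algebraically simple and satisfies $\nu_\eps\to1$.

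The next step is to show that $\nu_\eps$ is real and positive, so that we can set $n_\eps:=\nu_\eps^{-1}n_*$. This profile is nonnegative, belongs to $C_c^\infty$, and converges to $n_*$ in $C_c^\infty$. Since $\cT_{\eps,\Lambda_0}(nE)$ is linear in $n$, the map $E\mapsto\cT_{\eps,\Lambda_0}(n_\eps E)$ has the algebraically simple eigenvalue $1$. For reality I use that the kernel \eqref{eq:kernel} is real-valued, because $\mu$ is real. Hence $T_\eps$ commutes with complex conjugation, and $\overline{\nu_\eps}$ is also an eigenvalue of $T_\eps$ inside the same small disk centered at the real point $1$. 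Uniqueness in Lemma~\ref{lem:persistence} forces $\overline{\nu_\eps}=\nu_\eps$, and positivity follows from $\nu_\eps\to1$. The kernel is not symmetric when $\eps>0$, so this conjugation argument replaces the self-adjointness available at $\eps=0$.

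Let $E_\eps\in L^2$ be an eigenfunction with $E_\eps=\cT_{\eps,\Lambda_0}(n_\eps E_\eps)$, and define
\[
 H_\eps:=-(\Lambda_0-\mathcal G_\eps)^{-1}\bigl(n_\eps E_\eps\,\mu'\bigr),
\]
which is admissible by Lemma~\ref{lem:resolvent}(ii) since $\Lambda_0>3\eps/2$ for small $\eps$. Because $n_\eps E_\eps$ is compactly supported and lies in $L^2$, Lemma~\ref{lem:kernels}(iii) gives $H_\eps\in L^2_{y,w}\cap L^1_wL^2_y$ and $E_\eps'=\rho_{H_\eps}$, so $E[\rho_{H_\eps}]=E_\eps\in L^2$. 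The $L^2$ primitive is unique, which justifies this identification. The resolvent equation gives $(\Lambda_0-\mathcal G_\eps)H_\eps=-n_\eps E[\rho_{H_\eps}]\mu'$, which is exactly
\[
 \bigl(\cL^{\rm loc}_{n_\eps,\mu}+\eps\mathcal D_{b,c}\bigr)H_\eps=\Lambda_0H_\eps,
\]
and the left-hand side lies in $L^2_{y,w}$ because it equals $\Lambda_0H_\eps$. Also $H_\eps\neq0$: otherwise $\rho_{H_\eps}=0$, so $E_\eps$ would be a constant in $L^2$, hence $E_\eps=0$. Lemma~\ref{lem:debye}, applied with $\lambda=\Lambda_0/\eps$, then shows that $h_\eps=\mathcal R_\eps H_\eps$ satisfies $\cL_{\overline F_\eps}h_\eps=\lambda_\eps h_\eps$. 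The spaces are preserved by the rescaling: $L^2$ and $L^1_wL^2$ norms change only by factors of $\eps^{1/2}$, and $E[\rho_{h_\eps}]=\eps E_\eps(\cdot/\eps)\in L^2$.

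For real-valuedness, the eigenvalue $1$ is real and simple and $T_\eps$ commutes with conjugation, so $E_\eps$ can be taken real (replace it by its real or imaginary part, whichever is nonzero). The resolvent formula \eqref{eq:resolvent} preserves real functions, so $H_\eps$, and hence $h_\eps$, are then real. The main point requiring care is the uniqueness of the perturbed eigenvalue in a disk centered at a real point, which is what yields reality; Lemma~\ref{lem:persistence} supplies exactly this, so I expect no serious obstacle beyond checking that the smallness of $\eps$ required in Lemmas~\ref{lem:kernels} and~\ref{lem:persistence} can be met simultaneously.
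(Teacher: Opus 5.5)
Your proposal is correct and follows the paper's proof essentially step by step. The steps match in order: norm convergence from Lemma~\ref{lem:kernels} and persistence of the simple eigenvalue via Lemma~\ref{lem:persistence}, reality of $\nu_\eps$ from the real kernel and uniqueness in a disk centered at $1$, the renormalization $n_\eps=\nu_\eps^{-1}n_*$, reconstruction of $H_\eps$ through the resolvent and Lemma~\ref{lem:kernels}(iii), and the passage to $h_\eps$ via Lemma~\ref{lem:debye}.
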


\begin{proof}
{We choose $n_*$ as in Theorem~\ref{thm:mode}, so that $1$ is an algebraically simple eigenvalue of $E\mapsto\cT_{0,\Lambda_0}(n_*E)$. By Lemma~\ref{lem:kernels}, the compact operators $\cT_{\eps,\Lambda_0}(n_*\,\cdot)$ converge in norm to $\cT_{0,\Lambda_0}(n_*\,\cdot)$.
Lemma~\ref{lem:persistence} then gives a unique algebraically simple eigenvalue $\nu_\eps$ in a small disk centered at $1$, with $\nu_\eps\to1$. Since the kernels are real, $\overline{\nu_\eps}$ is also an eigenvalue in the same disk. Thus $\nu_\eps$ is real, and its convergence to $1$ ensures positivity for small $\eps$.}

To restore the field eigenvalue to $1$, we set
\[
 n_\eps:=\nu_\eps^{-1}n_*,
\]
and the linear dependence of the field operator on the spatial profile then ensures that $E\mapsto\cT_{\eps,\Lambda_0}(n_\eps E)$ has the simple eigenvalue $1$. Choose a nonzero real eigenvector $E\in L^2(\R)$ and define
\be\label{eq:Heps}
 H_\eps
 :=-(\Lambda_0-\mathcal G_\eps)^{-1}
   \bigl(n_\eps(y)E(y)\mu_{u,\gamma}'(w)\bigr),
\ee
so Lemma~\ref{lem:kernels}(iii), applied with $\varphi=n_\eps E$, gives $H_\eps\in {L^2_{y,w}}\cap L^1_wL^2_y$, $\rho_{H_\eps}\in L^2_y$, and $E'=\rho_{H_\eps}$.
Since $E\in L^2(\R)$, we identify it with the unique $L^2$ primitive $E[\rho_{H_\eps}]$.

We also have $H_\eps\ne0$, since otherwise $E'=\rho_{H_\eps}=0$ would force $E$ to be constant, and hence zero because $E\in L^2(\R)$, contradicting our choice of $E$. Therefore $H_\eps$ is a nonzero solution of
\[
 \bigl(\cL_{n_\eps,\mu_{u,\gamma}}^{\rm loc}
 +\eps\,\mathcal D_{b,c}\bigr)H_\eps
 =\Lambda_0H_\eps,
\]
so, applying Lemma~\ref{lem:debye}, we obtain an eigenfunction $h_\eps=\mathcal R_\eps H_\eps$ of the original self-similar linearization with eigenvalue $\Lambda_0/\eps$. Both $H_\eps$ and $h_\eps$ are real-valued, since $E$ and $n_\eps$ are real-valued and the resolvent in \eqref{eq:resolvent} preserves this property. Moreover, the fixed spatial dilation preserves $L^2\cap L^1_wL^2$, so it only remains to check the electric field, for which \eqref{eq:Eeps} gives
\[
 E[\rho_{h_\eps}](\eps y)=\eps E[\rho_{H_\eps}](y),
\]
from which we deduce that $E[\rho_{h_\eps}]\in L^2(\R)$. Finally, $\nu_\eps\to1$ implies $n_\eps\to n_*$ in $C_c^\infty$.
\end{proof}

\section{Adding a positive tail}
\label{sec:tails}

The background in Theorem~\ref{thm:drift} is compactly supported in space, so a pointwise bound by this background would also force the eigenfunction to have compact spatial support. For the operator without the similarity drift, the following proposition rules out such eigenfunctions.
Motivated by this obstruction, we add a strictly positive integrable tail to bound the eigenfunction along the characteristics and keep the two branches nonnegative, adjusting the coefficient of the compact part so that the rescaled eigenvalue remains equal to $\Lambda_0$.

\begin{prop}
\label{prop:noncompact}
Let $\Lambda\in\C$ with $\re\Lambda>0$, let $n\in C_c^\infty(\R)$, and let $\mu\in C^1(\R)$ be such that $\mu'$ is nonzero on an open interval contained either in $(0,\infty)$ or in $(-\infty,0)$. Suppose that $H\in {L^2_{y,w}}\cap L^1_wL^2_y$ and $E\in L^2(\R)$ solve, in the sense of distributions,
\[
 (\Lambda+w\,\pt_y)H=-n(y)E(y)\mu'(w),
 \qquad E'=\rho_H,
\]
and assume that $H$ is compactly supported in the $y$-variable. Then $H=0$.
\end{prop}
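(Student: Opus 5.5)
The plan is to use the Paley--Wiener theorem: compact support in $y$ makes the Fourier transform of $H$ entire in $k$. The zeros of the symbol $\Lambda+ikw$ then force the Fourier transform of $nE$ to vanish on a set with an accumulation point, so $nE=0$.

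\emph{Step 1: reduce to fibres.} Let $H$ be supported in $[-R,R]\times\R$. Testing the distributional equation against products $\varphi(y)\theta(w)$, we see that for a.e.\ $w$ the fibre $H(\cdot,w)\in L^2(\R)$ is supported in $[-R,R]$ and satisfies
\[
 \Lambda H(\cdot,w)+w\,\pt_yH(\cdot,w)=-nE\,\mu'(w)
 \qquad\text{in }\mathcal D'(\R_y).
\]
This uses Fubini and $H\in L^2_{y,w}$; the function $nE\in L^2$ is fixed, with support in $\spt n$. Taking Fourier transforms in $y$ gives, for a.e.\ $w$ and all real $k$,
\be\label{eq:fibre}
 (\Lambda+ikw)\,\wh H(k,w)=-\mu'(w)\,\wh{nE}(k).
\ee

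\emph{Step 2: analytic continuation.} By Paley--Wiener, for a.e.\ $w$ the function $k\mapsto\wh H(k,w)$ extends to an entire function of exponential type. The function $\wh{nE}$ is entire for the same reason, since $nE\in L^2$ has compact support. Both sides of \eqref{eq:fibre} are entire in $k$ and agree on $\R$, so by the identity theorem they agree on all of $\C$.

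Now fix such a $w\ne0$ and evaluate at the zero $k_w:=i\Lambda/w$ of the left-hand factor. This gives
\[
 \mu'(w)\,\wh{nE}(i\Lambda/w)=0.
\]
By hypothesis $\mu'\ne0$ on an open interval $I\subset(0,\infty)$, or $I\subset(-\infty,0)$. Hence $\wh{nE}(i\Lambda/w)=0$ for a.e.\ $w\in I$. The points $\{i\Lambda/w: w\in I\}$, restricted to a full-measure subset, form an infinite set with an accumulation point in $\C\setminus\{0\}$. This is because $w\mapsto i\Lambda/w$ is a nonconstant continuous injection on $I$, and the one-sidedness of $I$ keeps $w$ away from $0$. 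Therefore $\wh{nE}\equiv0$, that is, $nE=0$.

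\emph{Step 3: conclude.} Returning to \eqref{eq:fibre}, we have $(\Lambda+ikw)\wh H(k,w)=0$. Since $\re\Lambda>0$, the factor $\Lambda+ikw$ never vanishes for real $k$, so $\wh H(\cdot,w)=0$ for a.e.\ $w$. Hence $H=0$. Consequently $E'=\rho_H=0$, and $E\in L^2$ forces $E=0$, although only $H=0$ is claimed.

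The step needing the most care is Step~1. It requires passing rigorously from the distributional equation on $\R^2$ to the fibrewise identity for a.e.\ $w$, and checking that the support bound holds uniformly in $w$, so that Paley--Wiener applies fibre by fibre. I would handle this by pairing with $\theta(w)$ from a countable dense family and using separability of $C_c^\infty(\R_y)$. The analytic part, Steps~2 and~3, is then straightforward.
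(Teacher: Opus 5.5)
Your proof is correct and is essentially the paper's argument in Fourier form. Evaluating the continued fibre identity at $k=i\Lambda/w$ gives $\wh{nE}(i\Lambda/w)=\int_\R e^{\Lambda z/w}n(z)E(z)\,\di z=0$, which is exactly the vanishing of the paper's entire function $Q(\Lambda/w)$; the paper reads this off from the explicit outgoing solution \eqref{eq:outgoing} for $y>R$ and then applies the same identity theorem. The remaining differences are cosmetic: the one-sidedness of $I$ is not needed in your version (the paper uses it to fix the direction in which the fibre ODE is solved), and your final step divides by $\Lambda+ikw$, using $\re\Lambda>0$ instead of the compact support.
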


\begin{proof}
The function $nE$ is compactly supported and belongs to $L^1(\R)$ by Cauchy-Schwarz. Assume first that $\mu'$ is nonzero on an interval $I\subset(0,\infty)$. Testing the equation with tensor-product test functions, we find that, for almost every $w\in I$, the one-dimensional equation in $y$ holds in the sense of distributions, and solving it from $y=-\infty$ gives
\be\label{eq:outgoing}
 H(y,w)
 =-\frac{\mu'(w)}{w}
 \int_{-\infty}^y e^{-\Lambda(y-z)/w}\,n(z)E(z)\,\di z.
\ee
Taking $R$ such that $\spt n\subset[-R,R]$, we can then write, for $y>R$,
\[
 H(y,w)
 =-\frac{\mu'(w)}{w}\,e^{-\Lambda y/w}
 \int_\R e^{\Lambda z/w}n(z)E(z)\,\di z,
\]
which vanishes for all sufficiently large $y$ and almost every $w\in I$, because $H$ is compactly supported in $y$. Hence the entire function
\[
 Q(\zeta):=\int_\R e^{\zeta z}n(z)E(z)\,\di z
\]
vanishes at the points $\zeta=\Lambda/w$ for almost every $w\in I$. These points have an accumulation point in $\C$, so the identity theorem implies $Q\equiv0$, and hence $nE=0$.

The transport equation therefore reduces to $(\Lambda+w\,\pt_y)H=0$, whose only solution compactly supported in $y$ is zero for almost every $w\neq0$, so $H=0$. If the interval lies in $(-\infty,0)$, we solve from $y=+\infty$ and use the same Laplace-transform argument.
\end{proof}

Proposition~\ref{prop:noncompact} applies to $\mu_{u,\gamma}$ and shows that the eigenfunction in Theorem~\ref{thm:mode} extends outside the support of $n_*$.

\subsection{General phase-space profiles}

Since the tail will not separate into a spatial and a velocity factor, we extend the rescaling in Lemma~\ref{lem:debye} to a general smooth profile $G=G(y,w)$, for which we define
\be\label{eq:LG}
 \cL_G^{\rm loc}H
 :=-w\,\pt_yH-E[\rho_H]\,\pt_wG.
\ee

\begin{lemma}
\label{lem:scaling}
Let $\eps>0$, $b=c+1$, $a=-2-c$, and $G\in C^\infty(\R^2)\cap L^1(\R^2)$. Set $\overline F_{\eps,G}(\xi,w):=\eps^{-2}G(\xi/\eps,w)$ and choose the ion density in self-similar variables equal to $\rho_{\overline F_{\eps,G}}$. Suppose that
\[
 H\in {L^2_{y,w}}\cap L^1_wL^2_y,
 \qquad
 E[\rho_H]\in L^2(\R),
 \qquad
 E[\rho_H]\,\pt_wG\in {L^2_{y,w}}.
\]
Then, in the sense of distributions,
\be\label{eq:conjugacy G}
 \mathcal R_\eps^{-1}\cL_{\overline F_{\eps,G}}\mathcal R_\eps H
 =\eps^{-1}\cL_G^{\rm loc}H+\mathcal D_{b,c}H.
\ee
\end{lemma}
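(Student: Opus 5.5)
The plan is to follow the proof of Lemma~\ref{lem:debye}, replacing the product structure $n(y)\mu'(w)$ by the general derivative $\pt_wG$. I set $h=\mathcal R_\eps H$, so that $h(\xi,w)=H(\xi/\eps,w)$. The derivative identities \eqref{eq:derivatives} hold for distributions by duality: test against $\varphi(\xi,w)$ and change variables $\xi=\eps y$, which produces the Jacobian factor $\eps$ on both sides. They give
\[
-w\pt_\xi h=\eps^{-1}\mathcal R_\eps(-w\pt_yH),\qquad b\xi\pt_\xi h+cw\pt_wh-ah=\mathcal R_\eps(\mathcal D_{b,c}H).
\]

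The field term comes next. Since $\rho_h(\xi)=\rho_H(\xi/\eps)$ by \eqref{eq:rhoeps}, the function $\eps E[\rho_H](\xi/\eps)$ has derivative $\rho_h$ and lies in $L^2(\R)$. By uniqueness of the $L^2$ primitive, $E[\rho_h](\xi)=\eps E[\rho_H](\xi/\eps)$, exactly as in the proof of Lemma~\ref{lem:debye}. Combined with
\[
\pt_w\overline F_{\eps,G}(\xi,w)=\eps^{-2}(\pt_wG)(\xi/\eps,w),
\]
this gives
\[
E[\rho_h]\,\pt_w\overline F_{\eps,G}=\eps^{-1}\mathcal R_\eps\bigl(E[\rho_H]\,\pt_wG\bigr).
\]
The product on the right is a genuine $L^2$ function by the hypothesis $E[\rho_H]\pt_wG\in L^2_{y,w}$, so it makes sense as a distribution. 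Because the ion density is chosen equal to $\rho_{\overline F_{\eps,G}}$, the linearization \eqref{eq:linearization} is the correct operator, with field term $-E[\rho_h]\pt_w\overline F_{\eps,G}$.

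Summing the three contributions gives
\[
\cL_{\overline F_{\eps,G}}\mathcal R_\eps H=\mathcal R_\eps\bigl(\eps^{-1}\cL_G^{\rm loc}H+\mathcal D_{b,c}H\bigr)
\]
in distributions. Applying $\mathcal R_\eps^{-1}$, a dilation that acts continuously on distributions, yields \eqref{eq:conjugacy G}.

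The only real point of care is the field identity. One has to make sure that $E[\rho_h]$ is understood as the $L^2$ primitive, which is consistent with \eqref{eq:field} whenever $\rho_h\in L^1$. One also has to ensure that the product with $\pt_wG$ is not an ill-defined product of distributions; this is exactly what the hypothesis on $E[\rho_H]\pt_wG$ guarantees. The rest is routine bookkeeping of powers of $\eps$.
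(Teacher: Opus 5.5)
Your proposal is correct and follows the paper's argument, writing out the bookkeeping that the paper's two-line proof leaves implicit. The transport terms are handled by the dilation identities \eqref{eq:derivatives}, and the field term by $E[\rho_h](\eps y)=\eps E[\rho_H](y)$, justified as in Lemma~\ref{lem:debye} by uniqueness of the $L^2$ primitive, together with $\pt_w\overline F_{\eps,G}(\eps y,w)=\eps^{-2}\pt_wG(y,w)$.

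One minor remark: since $G$ is smooth and $E[\rho_H]\in L^2$, the product $E[\rho_H]\pt_wG$ is already a well-defined $L^2_{\mathrm{loc}}$ function. The hypothesis $E[\rho_H]\pt_wG\in L^2_{y,w}$ is therefore not what makes it a meaningful distribution; it serves the global $L^2$ framework used later.
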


\begin{proof}
For $h=\mathcal R_\eps H$, \eqref{eq:Eeps} and the definition of $\overline F_{\eps,G}$ give
\[
 E[\rho_h](\eps y)\,\pt_w\overline F_{\eps,G}(\eps y,w)
 =\eps^{-1}E[\rho_H](y)\,\pt_wG(y,w),
\]
and, using \eqref{eq:derivatives} for the transport terms, we obtain \eqref{eq:conjugacy G}.
\end{proof}

To retain a simple eigenvalue near $1$ when we add the tail, we need continuity in operator norm with respect to the profile $G\in L^2$. With $d_\eps(s)=(1-e^{-\eps s})/\eps$, the Birman-Schwinger operator for $G$ is defined by
\be\label{eq:KG}
 \bal
 K_{\eps,\Lambda,G}(y,z)
 &:=-\int_0^\infty e^{-(\Lambda+\eps(a+b+c))s}\,
 G\lf(z,
 \frac{e^{\eps cs}y-e^{-\eps s}z}{d_\eps(s)}
 \ri)\,\di s,\\
 (\cT_{\eps,\Lambda,G}E)(y)
 &:=\int_\R K_{\eps,\Lambda,G}(y,z)\,E(z)\,\di z.
 \eal
\ee
In particular, if $G(y,w)=n(y)\mu(w)$, then
\[
 \cT_{\eps,\Lambda,G}E=\cT_{\eps,\Lambda}(nE).
\]

\begin{lemma}
\label{lem:KG}
Let $\eps>0$, $b=c+1$, $a=-2-c$, and $\Lambda\in\R$. Suppose that $\Lambda>3\eps/2$ and
\be\label{eq:exponents}
 \Lambda+\eps(a+b+3c/2)>0,
 \qquad
 \Lambda+\eps(a+b/2+c)>0.
\ee
The following properties hold.
\begin{itemize}
\item[(i)] The map $G\mapsto\cT_{\eps,\Lambda,G}$ extends linearly and continuously from ${L^2_{y,w}}$ to the Hilbert-Schmidt operators on $L^2(\R)$.
\item[(ii)] Let $G\in C^\infty(\R^2)\cap {L^2_{y,w}}$ and suppose that $E\in L^2(\R)$ is a fixed point of $\cT_{\eps,\Lambda,G}$ such that
\be\label{eq:EG}
 E\,\pt_wG\in {L^2_{y,w}}\cap L^1_wL^2_y.
\ee
Then the function
\be\label{eq:HG}
 H(y,w)=-\int_0^\infty e^{-(\Lambda+\eps a)s}\,
 E(y_s)\,\pt_wG(y_s,w_s)\,\di s
\ee
belongs to ${L^2_{y,w}}\cap L^1_wL^2_y$, and the pair $(H,E)$ solves, in the sense of distributions,
\be\label{eq:mode G}
 (\cL_G^{\rm loc}+\eps\,\mathcal D_{b,c})H=\Lambda H,
 \qquad E'=\rho_H.
\ee
\item[(iii)] Conversely, let $G\in C^\infty(\R^2)\cap {L^2_{y,w}}$. {Suppose that $(H,E)$ satisfies \eqref{eq:mode G} in the sense of distributions, with $H,E\,\pt_wG\in L^2_{y,w}\cap L^1_wL^2_y$ and $E\in L^2(\R)$.} Then $E$ is a fixed point of $\cT_{\eps,\Lambda,G}$.
\end{itemize}
\end{lemma}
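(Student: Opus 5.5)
The plan is to follow the proof of Lemma~\ref{lem:kernels}, replacing $n(z)\mu(\cdot)$ by $G(z,\cdot)$ throughout, and to work directly with the kernel \eqref{eq:KG}.

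\emph{Part (i).} For fixed $s>0$ we change variables $w=(e^{\eps cs}y-e^{-\eps s}z)/d_\eps(s)$ in the $y$-integral. The Jacobian is $\di y=e^{-\eps cs}d_\eps(s)\,\di w$, so
\[
 \Bigl\|G\Bigl(z,\tfrac{e^{\eps cs}y-e^{-\eps s}z}{d_\eps(s)}\Bigr)\Bigr\|_{L^2_y}
 =e^{-\eps cs/2}d_\eps(s)^{1/2}\|G(z,\cdot)\|_{L^2_w}.
\]
Applying Minkowski's inequality in $s$ and then taking the $L^2_z$ norm gives
\[
 \|K_{\eps,\Lambda,G}\|_{L^2_{y,z}}\leq\|G\|_{L^2_{y,w}}
 \int_0^\infty d_\eps(s)^{1/2}e^{-[\Lambda+\eps(a+b+3c/2)]s}\,\di s .
\]
Since $d_\eps(s)\le s$ (and $d_\eps\le 1/\eps$), the integral is finite by the first condition in \eqref{eq:exponents}. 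The kernel is linear in $G$, so this bound gives the continuous linear extension to Hilbert-Schmidt operators. We first take $G$ in $C_c^\infty$, where everything is pointwise defined, and then pass to the density closure.

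\emph{Part (ii).} Set $\Psi:=-E(y)\pt_wG(y,w)\in L^2_{y,w}$. By Lemma~\ref{lem:resolvent}(ii), formula \eqref{eq:HG} is exactly $H=(\Lambda-\mathcal G_\eps)^{-1}\Psi$. Hence $H\in L^2_{y,w}$, and $(\Lambda-\mathcal G_\eps)H=\Psi$ holds in the sense of distributions because $H$ lies in the domain described in Lemma~\ref{lem:resolvent}(i).

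For the mixed norm, we take the $L^2_y$ norm inside the $s$ and $w$ integrals. The map $y\mapsto y_s$ has $\di y_s=e^{\eps bs}\,\di y$, and $\di w_s=e^{\eps cs}\,\di w$. This gives
\[
 \|H\|_{L^1_wL^2_y}\le\|E\pt_wG\|_{L^1_wL^2_y}\int_0^\infty e^{-(\Lambda+\eps a)s}e^{-\eps bs/2}e^{-\eps cs}\,\di s,
\]
which is finite by the second condition in \eqref{eq:exponents}.

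The main computation is $\rho_H=\pt_y(\cT_{\eps,\Lambda,G}E)$. We repeat \eqref{eq:rho H}: in the $w$-integral change variables to $z=y_s$, with $w_s=(e^{\eps cs}y-e^{-\eps s}z)/d_\eps(s)$. Then observe that $\pt_y$ of $G(z,w_s(y,z))$ equals $\frac{e^{\eps cs}}{d_\eps(s)}\pt_wG(z,w_s)$, and that $\pt_y$ only hits the second argument since $z$ is frozen. Collecting the exponentials should reproduce the factor $e^{-(\Lambda+\eps(a+b+c))s}$ of \eqref{eq:KG}.

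We do this first for smooth compactly supported $G$ and $E$, then approximate, using the bounds above and part (i) to pass to the limit in distributions. This approximation step is where I expect the most care. We must approximate $E$ and $G$ simultaneously while keeping $E\pt_wG$ convergent in $L^2\cap L^1_wL^2_y$. I would first try truncating $E$ (keeping $G$ fixed smooth), using dominated convergence in \eqref{eq:EG}. Then $E=\cT_{\eps,\Lambda,G}E$ gives $E'=\rho_H$. Combined with $(\Lambda-\mathcal G_\eps)H=-E\pt_wG$ and $E=E[\rho_H]$ (the unique $L^2$ primitive), this is \eqref{eq:mode G}.

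\emph{Part (iii).} Since $\Psi:=-E\pt_wG\in L^2$ and $(\Lambda-\mathcal G_\eps)H=\Psi$ with $H\in L^2$, we have $H$ in the domain of the generator. The injectivity in Lemma~\ref{lem:resolvent}(ii) then forces $H$ to equal \eqref{eq:HG}. The computation in (ii) gives $\rho_H=\pt_y(\cT_{\eps,\Lambda,G}E)$. Hence $\pt_y\bigl(E-\cT_{\eps,\Lambda,G}E\bigr)=0$, and since this difference lies in $L^2(\R)$ by (i), it vanishes.
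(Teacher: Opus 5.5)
Your proposal is correct and follows the paper's proof essentially step by step: the dilation change of variables plus Minkowski for (i); the resolvent representation, the mixed-norm bound and the substitution $z=y_s$ giving \eqref{eq:rho G} for (ii); and injectivity of $\Lambda-\mathcal G_\eps$ followed by the vanishing of an $L^2$ constant for (iii). The only difference is how you justify \eqref{eq:rho G}: you propose truncating $E$, whereas the paper works directly with the given $E$ and $G$, truncates the $s$-integral to $[\delta,R]$, and uses \eqref{eq:KG norm} and the mixed-norm estimate to license Fubini. Since your base case with fixed non-compactly supported $G$ needs exactly those bounds anyway, the approximation is unnecessary (and approximating $G$ itself, which you rightly avoid, would not be straightforward).
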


\begin{proof}
To prove (i), we fix $s>0$ and $z$ and change variables in the second argument of $G$ to get
\[
 \lf\|G\lf(z,
 \frac{e^{\eps cs}\,\cdot-e^{-\eps s}z}{d_\eps(s)}
 \ri)\ri\|_{L^2_y}
 =d_\eps(s)^{1/2}e^{-\eps cs/2}\,
 \|G(z,\cdot)\|_{L^2_w},
\]
and Minkowski's inequality then gives
\be\label{eq:KG norm}
 \|K_{\eps,\Lambda,G}\|_{L^2_{y,z}}
 \leq \|G\|_{L^2_{y,w}}
 \int_0^\infty d_\eps(s)^{1/2}\,
 e^{-[\Lambda+\eps(a+b+3c/2)]s}\,\di s.
\ee
Since $0\leq d_\eps(s)\leq\eps^{-1}$, the last integral is finite by \eqref{eq:exponents}. Together with the linear dependence of the kernel on $G$, this proves (i).

For (ii), changing variables from $(y,w)$ to $(y_s,w_s)$, we obtain
\[
 \bal
 \|(E\pt_wG)(y_s,w_s)\|_{L^2_{y,w}}
 &=e^{-\eps(b+c)s/2}\|E\pt_wG\|_{L^2_{y,w}},\\
 \|(E\pt_wG)(y_s,w_s)\|_{L^1_wL^2_y}
 &=e^{-\eps(b/2+c)s}\|E\pt_wG\|_{L^1_wL^2_y},
 \eal
\]
so, applying Minkowski's inequality in \eqref{eq:HG} and using $a+(b+c)/2=-3/2$, we deduce
\be\label{eq:HG norm}
 \|H\|_{L^2}
 \leq\frac{\|E\pt_wG\|_{L^2}}{\Lambda-3\eps/2},
 \qquad
 \|H\|_{L^1_wL^2_y}
 \leq
 \frac{\|E\,\pt_wG\|_{L^1_wL^2_y}}
 {\Lambda+\eps(a+b/2+c)}.
\ee
In particular, we have $\rho_H\in L^2$, while using the resolvent formula \eqref{eq:resolvent} gives
\[
 (\Lambda-\mathcal G_\eps)H=-E\pt_wG.
\]
{To compute $\rho_H$, we use the change of variables from the proof of Lemma~\ref{lem:kernels}(iii), with $E(z)G(z,w_s(y,z))$ in place of $\varphi(z)\mu(w_s(y,z))$.}
Using
\[
 \pt_y\bigl[G(z,w_s(y,z))\bigr]
 =\frac{e^{\eps cs}}{d_\eps(s)}\,\pt_wG(z,w_s(y,z)),
\]
we obtain
\be\label{eq:rho G}
 \rho_H(y)=\pt_y\int_\R K_{\eps,\Lambda,G}(y,z)E(z)\,\di z
       =\pt_y\bigl(\cT_{\eps,\Lambda,G}E\bigr)(y)
\ee
in the sense of distributions. We justify this identity by first integrating over $\delta<s<R$, testing against a function in $C_c^\infty(\R_y)$, and then letting $\delta\to0^+$ and $R\to\infty$, using \eqref{eq:KG norm} and the mixed-norm estimate in \eqref{eq:HG norm}. Since $E=\cT_{\eps,\Lambda,G}E$, we conclude that $E'=\rho_H$.

For (iii), we apply the $L^2$ resolvent to the first equation in \eqref{eq:mode G}, so uniqueness of the inverse gives $H$ in the form \eqref{eq:HG}. Equation~\eqref{eq:rho G} then shows that $(E-\cT_{\eps,\Lambda,G}E)'=0$, and since both terms belong to $L^2(\R)$, the constant $E-\cT_{\eps,\Lambda,G}E$ must be zero.
\end{proof}

\subsection{A weight adapted to the characteristics}

We write $\langle z\rangle=(1+|z|^2)^{1/2}$ and assume $c>0$. We choose the positive tail so that it also bounds the eigenfunction reconstructed along the characteristics. Since the velocity derivative of the compact background contains $\mu_{u,\gamma}'$, which decays like $\langle w\rangle^{-5}$, we take the same decay in velocity for the tail. For the spatial factor, we use the identity
\be\label{eq:ray}
 \eps y_s-w_s=e^{\eps bs}(\eps y-w),
\ee
which follows from \eqref{eq:flow}. For $\eps>0$ and an integer $N>2$, we define

\be\label{eq:weight}
 Z_\eps(y,w):=\frac{\eps y-w}{\langle w\rangle},
 \qquad
 \Omega_\eps(y,w):=\langle w\rangle^{-5}\,\langle Z_\eps(y,w)\rangle^{-N},
\ee
where the factor $\langle Z_\eps\rangle^{-N}$ controls the decay away from the line $w=\eps y$ on the velocity scale $\langle w\rangle$.

\begin{lemma}
\label{lem:weight}
Let $0<\eps\leq1$, let $N>2$ be an integer, and let $b=c+1$ with $c>0$.
The following properties hold.
\begin{itemize}
\item[(i)] The function $\Omega_\eps$ is smooth and strictly positive.
\item[(ii)] We have
\be\label{eq:Omega moments}
 {\iint_{\R^2}(1+|y|+|w|)\,\Omega_\eps(y,w)\,\di y\,\di w}<\infty,
 \qquad \Omega_\eps\in {L^2_{y,w}}.
\ee
\item[(iii)] For a constant $C$ depending only on $N$, we have
\be\label{eq:Omega derivatives}
 \bal
 |\pt_w\Omega_\eps|&\leq C\langle w\rangle^{-1}\Omega_\eps,&
 |\pt_{ww}\Omega_\eps|&\leq C\langle w\rangle^{-2}\Omega_\eps,\\
 |\pt_{yw}\Omega_\eps|&\leq C\eps\langle w\rangle^{-2}\Omega_\eps,&
 |w\,\pt_y\Omega_\eps|+|y\,\pt_y\Omega_\eps|
 +|w\,\pt_w\Omega_\eps|&\leq C\,\Omega_\eps.
 \eal
\ee
In particular, $\pt_{yw}\Omega_\eps,\pt_{ww}\Omega_\eps\in L^1(\R^2)$.
\item[(iv)] The function
\be\label{eq:Aeps}
 A_\eps(y):=\int_\R|\pt_w\Omega_\eps(y,w)|\,\di w
\ee
belongs to $L^2(\R)$.
\item[(v)] Along the flow \eqref{eq:flow}, we have
\be\label{eq:Omega flow}
 \Omega_\eps(y_s,w_s)\leq\Omega_\eps(y,w)
 \qquad\text{for every }s\geq0.
\ee
\end{itemize}
\end{lemma}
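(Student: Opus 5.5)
The plan is to reduce every item to the single change of variables $y\mapsto Z_\eps$ at fixed $w$, which has $\di y=\langle w\rangle\eps^{-1}\di Z$, together with elementary formulas for the derivatives of $Z_\eps$. Item (i) is immediate, since $\langle\cdot\rangle^{-5}$ and $\langle\cdot\rangle^{-N}$ are smooth and positive and $Z_\eps$ is smooth.

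\emph{Item (ii).} For fixed $w$ the change of variables gives
\[
 \int_\R\Omega_\eps\,\di y=\eps^{-1}\langle w\rangle^{-4}\int_\R\langle Z\rangle^{-N}\di Z,
\]
which is integrable in $w$. For the first moment in $y$, I write $\eps|y|\le|w|+\langle w\rangle|Z|$. This bounds the $y$-moment by $\eps^{-2}\langle w\rangle^{-3}\int\langle Z\rangle^{1-N}\di Z$, and the integral is finite because $N>2$. The $|w|$-moment is bounded by $\eps^{-1}\langle w\rangle^{-3}$ up to a constant, which is also integrable. For the $L^2$ bound, the same substitution applied to $\Omega_\eps^2$ gives $\eps^{-1}\langle w\rangle^{-9}\int\langle Z\rangle^{-2N}\di Z$.

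\emph{Item (iii).} I first record the derivatives of $Z_\eps$:
\[
 \pt_yZ_\eps=\eps\langle w\rangle^{-1},\qquad
 \pt_wZ_\eps=-\langle w\rangle^{-1}-Z_\eps\,w\langle w\rangle^{-2},
\]
so that $|\pt_wZ_\eps|\le(1+|Z_\eps|)\langle w\rangle^{-1}$. Since
\[
 \pt\log\Omega_\eps=-5\,\pt\log\langle w\rangle-N Z_\eps\langle Z_\eps\rangle^{-2}\,\pt Z_\eps,
\]
and $|Z|(1+|Z|)\langle Z\rangle^{-2}\le 2$, the first-order bounds follow:
\begin{itemize}
\item $|\pt_w\Omega_\eps|\le C\langle w\rangle^{-1}\Omega_\eps$, hence also $|w\pt_w\Omega_\eps|\le C\Omega_\eps$;
\item $|w\pt_y\Omega_\eps|\le N\eps|w|\langle w\rangle^{-1}\Omega_\eps\le C\Omega_\eps$;
\item for $y\pt_y$, I use $\eps y\langle w\rangle^{-1}=Z_\eps+w\langle w\rangle^{-1}$, so $|y\pt_y\Omega_\eps|\le N(|Z|+1)|Z|\langle Z\rangle^{-2}\,\Omega_\eps\le C\Omega_\eps$.
\end{itemize}
The constant $C$ depends only on $N$, since $\eps\le1$ enters only favourably.

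For the second-order bounds I differentiate these expressions once more. Each further $w$-derivative of $\langle w\rangle^{-k}$, of $Z_\eps\langle Z_\eps\rangle^{-2}$, or of $\pt_wZ_\eps$ costs a factor $C\langle w\rangle^{-1}$ in the same way. Each $y$-derivative produces a factor $\eps\langle w\rangle^{-1}$ multiplied by a bounded function of $Z$. This gives the $\pt_{ww}$ and $\pt_{yw}$ estimates. The $L^1$ claims then follow from (ii), because $\langle w\rangle^{-2}\Omega_\eps\le\Omega_\eps$.

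\emph{Item (iv).} By (iii),
\[
 A_\eps(y)\le C\int_\R\langle w\rangle^{-6}\langle Z_\eps(y,w)\rangle^{-N}\di w.
\]
By Minkowski's inequality, $\|A_\eps\|_{L^2_y}$ is at most $C\int\langle w\rangle^{-6}\,\|\langle Z_\eps(\cdot,w)\rangle^{-N}\|_{L^2_y}\,\di w$. The same change of variables gives $\|\langle Z_\eps(\cdot,w)\rangle^{-N}\|_{L^2_y}=C_N(\langle w\rangle/\eps)^{1/2}$, so $\|A_\eps\|_{L^2_y}\le C\eps^{-1/2}\int\langle w\rangle^{-11/2}\di w<\infty$.

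\emph{Item (v).} This is the only place where the structure of the flow matters, and I expect it to be the main point. Since $c>0$, we have $|w_s|=e^{\eps cs}|w|\ge|w|$, so the factor $\langle w\rangle^{-5}$ decreases along the flow. For the other factor, the ray identity \eqref{eq:ray} gives
\[
 Z_\eps(y_s,w_s)=\frac{e^{\eps bs}(\eps y-w)}{\langle e^{\eps cs}w\rangle}.
\]
Moreover,
\[
 \langle e^{\eps cs}w\rangle\le e^{\eps cs}\langle w\rangle\le e^{\eps bs}\langle w\rangle,
\]
where the first inequality uses $e^{\eps cs}\ge1$ and the second uses $b=c+1>c$. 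Hence $|Z_\eps(y_s,w_s)|\ge|Z_\eps(y,w)|$, so $\langle Z_\eps\rangle^{-N}$ also decreases along the flow, which proves \eqref{eq:Omega flow}.
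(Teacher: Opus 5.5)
Your proposal is correct and follows essentially the same route as the paper. The paper also uses the substitution $y\mapsto Z_\eps$ at fixed $w$ for (ii) and (iv), logarithmic derivatives of $\Omega_\eps$ together with $\eps y=\langle w\rangle Z_\eps+w$ for (iii), and the ray identity with $\langle w_s\rangle\le e^{\eps cs}\langle w\rangle\le e^{\eps bs}\langle w\rangle$ for (v). The only place you are terser is the second-order part of (iii), where the paper writes out $|\pt_{ww}Z_\eps|\le C\langle w\rangle^{-2}(1+|Z_\eps|)$ and $|\pt_{yw}Z_\eps|\le C\eps\langle w\rangle^{-2}$, which makes explicit that the extra factor $(1+|Z_\eps|)$ is absorbed by $Z_\eps\langle Z_\eps\rangle^{-2}$.
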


\begin{proof}
The smoothness and strict positivity in (i) follow directly from the definition \eqref{eq:weight}.
To prove (ii), we fix $w$ and use $Z:=Z_\eps(y,w)$ as the spatial variable, so that $\di y=\langle w\rangle\eps^{-1}\di Z$ and
\[
 \|\Omega_\eps\|_{L^1}
 =\frac1\eps\int_\R \langle w\rangle^{-4}\,\di w
 \int_\R\langle Z\rangle^{-N}\,\di Z,
\]
while
\[
 \|\Omega_\eps\|_{L^2}^2
 =\frac1\eps\int_\R \langle w\rangle^{-9}\,\di w
  \int_\R\langle Z\rangle^{-2N}\,\di Z.
\]
For the first moments, we have
\begin{align*}
 {\iint_{\R^2}|w|\Omega_\eps\,\di y\,\di w}
 &=\frac1\eps\int_\R |w|\langle w\rangle^{-4}\,\di w
   \int_\R\langle Z\rangle^{-N}\,\di Z,\\
 {\iint_{\R^2}|y|\Omega_\eps\,\di y\,\di w}
 &\leq\frac1{\eps^2}\lf(
   \int_\R \langle w\rangle^{-3}\,\di w\int_\R|Z|\langle Z\rangle^{-N}\,\di Z
   +\int_\R|w|\langle w\rangle^{-4}\,\di w\int_\R\langle Z\rangle^{-N}\,\di Z
   \ri),
\end{align*}
and these integrals are finite because $N>2$, which proves (ii).
For (iii), we differentiate and find

\be\label{eq:Z derivatives}
 \bal
 \pt_yZ_\eps&=\eps\langle w\rangle^{-1},
 &|\pt_{yw}Z_\eps|&\leq C\eps\langle w\rangle^{-2},\\
 \pt_wZ_\eps&=-\langle w\rangle^{-1}-w\langle w\rangle^{-2}Z_\eps,
 &|\pt_{ww}Z_\eps|&\leq C\langle w\rangle^{-2}(1+|Z_\eps|).
 \eal
\ee
The first derivatives satisfy
\be\label{eq:Omega gradient}
 \frac{\pt_y\Omega_\eps}{\Omega_\eps}
 =-\frac{N\eps Z_\eps}{\langle w\rangle\langle Z_\eps\rangle^2},
 \qquad
 \frac{\pt_w\Omega_\eps}{\Omega_\eps}
 =-\frac{5w}{\langle w\rangle^2}
   -\frac{NZ_\eps\pt_wZ_\eps}{\langle Z_\eps\rangle^2},
\ee
so, using \eqref{eq:Z derivatives}, $0<\eps\leq1$, and $\eps y=\langle w\rangle Z_\eps+w$, we deduce the first-order bounds in \eqref{eq:Omega derivatives}.
For the second derivatives, we use
\[
 \frac{\pt_{iw}\Omega_\eps}{\Omega_\eps}
 =\pt_w\lf(\frac{\pt_i\Omega_\eps}{\Omega_\eps}\ri)
  +\frac{\pt_i\Omega_\eps}{\Omega_\eps}
   \frac{\pt_w\Omega_\eps}{\Omega_\eps},
 \qquad i\in\{y,w\},
\]
and, differentiating the quotients in \eqref{eq:Omega gradient} and using the first-order bounds in \eqref{eq:Omega derivatives}, we obtain
\begin{align*}
 \frac{|\pt_{ww}\Omega_\eps|}{\Omega_\eps}
 &\leq C\lf(
  \langle w\rangle^{-2}
  +\frac{|\pt_wZ_\eps|^2}{\langle Z_\eps\rangle^2}
  +\frac{|\pt_{ww}Z_\eps|}{\langle Z_\eps\rangle}
 \ri)
 \leq C\langle w\rangle^{-2},\\
 \frac{|\pt_{yw}\Omega_\eps|}{\Omega_\eps}
 &\leq C\eps\lf(
  \frac{1}{\langle w\rangle^2\langle Z_\eps\rangle}
  +\frac{|\pt_wZ_\eps|}{\langle w\rangle\langle Z_\eps\rangle^2}
 \ri)
 \leq C\eps\langle w\rangle^{-2}.
\end{align*}

Since $\Omega_\eps\in L^1(\R^2)$, these bounds also show that {$\pt_{yw}\Omega_\eps,\pt_{ww}\Omega_\eps\in L^1(\R^2)$}, completing the proof of (iii).
We note in addition that the first bound in \eqref{eq:Omega derivatives} implies $\sup_y|\pt_w\Omega_\eps(y,w)|\leq C\langle w\rangle^{-6}$.
To prove (iv), we take the $L^2_y$ norm in the first bound of \eqref{eq:Omega derivatives}, use $Z=Z_\eps(y,w)$ as the spatial variable, and apply Minkowski's inequality to conclude that
\[
 \|\pt_w\Omega_\eps(\cdot,w)\|_{L^2_y}
 \leq C\eps^{-1/2}\langle w\rangle^{-11/2},
 \qquad
 \|A_\eps\|_{L^2_y}
 \leq\int_\R
 \|\pt_w\Omega_\eps(\cdot,w)\|_{L^2_y}\,\di w<\infty.
\]

For (v), we use \eqref{eq:ray} and $\langle w\rangle\leq\langle w_s\rangle\leq e^{\eps cs}\langle w\rangle$ to obtain
\[
 |Z_\eps(y_s,w_s)|
 =e^{\eps bs}\,\frac{\langle w\rangle}{\langle w_s\rangle}\,|Z_\eps(y,w)|
 \geq e^{\eps(b-c)s}|Z_\eps(y,w)|
 =e^{\eps s}|Z_\eps(y,w)|,
\]
so, by the flow property, both $\langle w_s\rangle^{-5}$ and $\langle Z_\eps(y_s,w_s)\rangle^{-N}$ are nonincreasing in $s$, and their product satisfies (v).
\end{proof}

\subsection{{Construction of the positive background}}

The pointwise bound by the background will keep the two branches nonnegative, while control of the field in $L^\infty$ and of the velocity derivative in $L^1$ will make the quadratic term $E[\rho_H]\pt_wH$ in their common source integrable.

\begin{theorem}
\label{thm:tail}
Fix $b>2$ and an integer $N>2$ in \eqref{eq:weight}, set $c=b-1$ and $a=-b-1$, and fix $\Lambda_0>0$ and $u>\gamma>0$. Let $n_\eps$ be the profiles given by Theorem~\ref{thm:drift}. For all sufficiently small $\eps>0$ there exist $\theta_0>0$ and, for every $0<\theta<\theta_0$, a number $\alpha_{\eps,\theta}>0$ such that, setting
\be\label{eq:Gtheta}
 G_{\eps,\theta}(y,w)
 :=\alpha_{\eps,\theta}\,n_\eps(y)\,\mu_{u,\gamma}(w)
 +\theta\,\Omega_\eps(y,w),
\ee
there exists a nonzero real function
\[
 H_{\eps,\theta}
 \in {L^2_{y,w}}\cap L^1_wL^2_y\cap L^1(\R^2)
\]
such that, in the sense of distributions,
\[
 (\cL_{G_{\eps,\theta}}^{\rm loc}+\eps\mathcal D_{b,c})H_{\eps,\theta}
 =\Lambda_0H_{\eps,\theta}.
\]
{We have $E[\rho_{H_{\eps,\theta}}]\in H^1(\R)$}, and
\be\label{eq:pointwise}
 |H_{\eps,\theta}(y,w)|\leq C_{\eps,\theta}\,\Omega_\eps(y,w),
 \qquad
 \pt_wH_{\eps,\theta}\in L^1(\R^2).
\ee
In particular, the self-similar background
\be\label{eq:Ftheta}
 \overline F_{\eps,\theta}(\xi,w)
 :=\eps^{-2}\,G_{\eps,\theta}(\xi/\eps,w)
\ee
is smooth, strictly positive, and integrable, and its linearization has the eigenvalue $\Lambda_0/\eps$ with eigenfunction $h_{\eps,\theta}=\mathcal R_\eps H_{\eps,\theta}$. This eigenfunction satisfies a pointwise bound by a constant multiple of $\overline F_{\eps,\theta}$.
\end{theorem}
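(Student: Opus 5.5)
The plan is a perturbation argument in the profile $G$, using Lemma~\ref{lem:KG} for the Birman--Schwinger reduction and Lemma~\ref{lem:persistence} for persistence of the simple eigenvalue. The pointwise bound then comes from the characteristic representation \eqref{eq:HG} together with Lemma~\ref{lem:weight}(v).

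\emph{Step 1: choice of $\alpha_{\eps,\theta}$.} Fix $\eps$ small as in Theorem~\ref{thm:drift}, so that $1$ is an algebraically simple eigenvalue of $\cT_{\eps,\Lambda_0,G_0}$ with $G_0:=n_\eps\mu_{u,\gamma}$. Since $b>2$ and $c=b-1>0$, the exponent conditions \eqref{eq:exponents} and $\Lambda_0>3\eps/2$ hold for small $\eps$. By Lemma~\ref{lem:KG}(i), $G\mapsto\cT_{\eps,\Lambda_0,G}$ is linear and continuous from $L^2_{y,w}$ into the Hilbert--Schmidt operators. Because $\Omega_\eps\in L^2$ by Lemma~\ref{lem:weight}(ii), we get $\cT_{\eps,\Lambda_0,G_0+\theta\Omega_\eps}\to\cT_{\eps,\Lambda_0,G_0}$ in norm as $\theta\to0$.

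Lemma~\ref{lem:persistence} then yields a simple eigenvalue $\nu_\theta\to1$. It is real because the kernel is real and $\overline{\nu_\theta}$ lies in the same disk. However, rescaling by a scalar would also rescale the tail. I therefore work with the two-parameter family $T(\alpha,\theta):=\alpha\,\cT_{G_0}+\theta\,\cT_{\Omega_\eps}$. For fixed small $\theta$, the simple eigenvalue $\nu(\alpha,\theta)$ near $1$ is continuous in $\alpha$ near $1$, by the Riesz projection argument. Moreover $\nu(\alpha,0)=\alpha$, and $|\nu(\alpha,\theta)-\alpha|\le C\theta\|\cT_{\Omega_\eps}\|$ uniformly for $\alpha\in[1/2,2]$. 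The intermediate value theorem in $\alpha$ then gives $\alpha_{\eps,\theta}$ near $1$ with $\nu=1$. Take a real eigenvector $E$.

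\emph{Step 2: reconstruction of $H$.} To apply Lemma~\ref{lem:KG}(ii), I need $E\,\pt_wG\in L^2\cap L^1_wL^2_y$. I first upgrade $E$ to $L^\infty$. The fixed-point identity gives $E'=\rho_H$ with $\rho_H\in L^2$, but to avoid circularity I bound $\cT E$ directly. For fixed $y$, the kernel's $L^2_z$ norm is controlled after changing variables in $z$ against the second argument of $G$. This works for $G_0$ by compact support in $z$. For $\Omega_\eps$ I use $\sup_z|\Omega_\eps(z,\cdot)|\lesssim\langle w\rangle^{-5}$, so the kernel lies in $L^\infty_yL^2_z$ and $E=\cT E\in L^\infty$. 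Then $|E\,\pt_wG|\le C(|n_\eps\mu'|+\langle w\rangle^{-1}\Omega_\eps)$ by Lemma~\ref{lem:weight}(iii), which lies in $L^2\cap L^1_wL^2_y\cap L^1$. Lemma~\ref{lem:KG}(ii) gives the mode $H_{\eps,\theta}$, and it is nonzero as in Theorem~\ref{thm:drift}. Also $E\in L^2$ with $E'=\rho_H\in L^2$, so $E\in H^1$.

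\emph{Step 3: pointwise bound (the main obstacle).} Since $\mu'_{u,\gamma}=O(\langle w\rangle^{-5})$ and $n_\eps$ has compact support, $|n_\eps\mu'|\le C_\eps\Omega_\eps$ on $\spt n_\eps$ (there $\langle Z_\eps\rangle\lesssim 1$). Combined with Step 2, this gives $|E\,\pt_wG|\le C\Omega_\eps$. In \eqref{eq:HG}, Lemma~\ref{lem:weight}(v) yields
\[
|H|\le C\Omega_\eps(y,w)\int_0^\infty e^{-(\Lambda_0+\eps a)s}\,\di s .
\]
This converges since $\Lambda_0+\eps a>0$ for small $\eps$; this is where the large rescaled growth rate enters. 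Because $G_{\eps,\theta}\ge\theta\Omega_\eps$, the bound is by a multiple of the background. Finally, $H\in L^1$ follows from $\Omega_\eps\in L^1$.

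\emph{Step 4: integrability of $\pt_wH$.} Differentiate \eqref{eq:HG} in $w$. Using $\pt_w y_s=-e^{\eps bs}d_\eps(s)$ and $\pt_w w_s=e^{\eps cs}$, I get terms of the form $E'(y_s)\pt_wG$, $E\,\pt_{yw}G$ and $E\,\pt_{ww}G$ at $(y_s,w_s)$. These carry growth factors $e^{\eps(b+c)s}\max(1,s)$ and change-of-variables factors $e^{-\eps(b+c)s}$ in $L^1$, so the $s$-integral converges for small $\eps$. Each term is in $L^1$: the second-derivative terms by Lemma~\ref{lem:weight}(iii) and $\mu''\in L^1$ with $E\in L^\infty$, and the $E'$ term by $E'\in L^2$ with $\pt_wG\in L^2$ after applying Cauchy--Schwarz in $y$ and summing in $w$ via $A_\eps\in L^2$ from Lemma~\ref{lem:weight}(iv). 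The concluding claims about $\overline F_{\eps,\theta}$ follow from Lemma~\ref{lem:scaling}.
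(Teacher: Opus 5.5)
Your proposal is correct and follows the architecture of the paper's proof: linearity of $G\mapsto\cT_{\eps,\Lambda_0,G}$ to readjust the coefficient of the compact part, reconstruction of $H$ through Lemma~\ref{lem:KG}(ii), and the pointwise bound from \eqref{eq:HG} with Lemma~\ref{lem:weight}(v) and $\Lambda_0+\eps a>0$. The $L^1$ bound on $\pt_wH$ is also obtained the same way, by differentiating along characteristics and changing variables. Your cruder factor $\max(1,s)e^{-(\Lambda_0+\eps a)s}$ is enough in place of the paper's $e^{-(\Lambda_0-2\eps)s}d_\eps(s)$ and $e^{-(\Lambda_0-\eps)s}$.

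The differences are local. To fix $\alpha_{\eps,\theta}$, the paper takes a real analytic branch $\nu(\alpha,\theta)$ from Kato and applies the implicit function theorem. Your intermediate value argument needs only norm continuity, reality of the isolated eigenvalue (real kernel), and uniform isolation of $\alpha$ in $\sigma(\alpha\cT_{\eps,\Lambda_0,G_0})$ for $\alpha\in[1/2,2]$, which holds by scaling. It is more elementary, and with $|\nu(\alpha,\theta)-\alpha|\le C\theta$ it still gives $|\alpha_{\eps,\theta}-1|\le C\theta$ and simplicity; only analytic dependence on $\theta$ is lost, and it is never used.

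Your $L^\infty_yL^2_z$ kernel estimate for $E$ is correct but unnecessary, since there is no circularity to avoid. The bound $\sup_y|\pt_w\Omega_\eps(y,w)|\le C\langle w\rangle^{-6}$ already gives \eqref{eq:EG} from $E\in L^2$. Then $E'=\rho_H\in L^2$ yields $E\in H^1\subset L^\infty$, which is the paper's order. Your pointwise step, first proving $|E\,\pt_wG_{\eps,\theta}|\le C\Omega_\eps$ globally and then applying (v), is equivalent to the paper's estimate of $Z_\eps$ on characteristics meeting $\spt n_\eps$.

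One line is missing. Once $H\in L^1$, the density $\rho_H$ is integrable, so $E[\rho_H]$ in $\cL^{\rm loc}_G$ and in the claim $E[\rho_H]\in H^1$ means \eqref{eq:field}. That agrees with the $L^2$ primitive $E$ from Lemma~\ref{lem:KG} only if $\int\rho_H=0$. This neutrality follows at once because $E\in H^1$ vanishes at $\pm\infty$ and $E'=\rho_H$. The paper records it explicitly since \eqref{eq:neutrality} is used in Section~\ref{sec:proof}.

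Differentiating \eqref{eq:HG} under the integral should also be justified. One way is the paper's truncations $H_T$, for which $H_T$ and $\pt_wH_T$ converge in $L^1$.
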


\begin{proof}
\step{1}{}
Fix $\eps>0$ sufficiently small so that
\be\label{eq:parameters}
 \Lambda_0>(b+1)\eps,
\ee
so the self-similar growth rate $\Lambda_0/\eps$ exceeds $b+1=-a$, ensuring convergence of the characteristic integral used for the pointwise bound in Step~3. Since $b>2$ and $c=b-1$, this choice also implies the parameter conditions in Lemma~\ref{lem:KG} with $\Lambda=\Lambda_0$.
Setting $G_0:=n_\eps\mu_{u,\gamma}$, we know from Theorem~\ref{thm:drift} that the compact operator $\cT_{\eps,\Lambda_0,G_0}$ has the algebraically simple eigenvalue $1$.
By the linearity in $G$ proved in Lemma~\ref{lem:KG},
\[
 \cT_{\eps,\Lambda_0,\alpha G_0+\theta\Omega_\eps}
 =\alpha\,\cT_{\eps,\Lambda_0,G_0}
 +\theta\,\cT_{\eps,\Lambda_0,\Omega_\eps}.
\]
The simple eigenvalue $1$ therefore extends to a real analytic branch $\nu(\alpha,\theta)$ near $(1,0)$, with $\nu(1,0)=1$ and real values for real $(\alpha,\theta)$ {(see \cite[Chapter~II, \S5.7, and Chapter~VII, \S1.3]{Kato})}.
At $\theta=0$, we have
\[
 \nu(\alpha,0)=\alpha,
 \qquad \pt_\alpha\nu(1,0)=1,
\]
so the implicit function theorem provides a real analytic function $\theta\mapsto\alpha_{\eps,\theta}$ such that $\alpha_{\eps,0}=1$ and $\nu(\alpha_{\eps,\theta},\theta)=1$, with $\alpha_{\eps,\theta}>0$ for $\theta$ sufficiently small by continuity.

\step{2}{}
Let $E$ be a real eigenvector of $\cT_{\eps,\Lambda_0,G_{\eps,\theta}}$ with eigenvalue $1$, normalized by $\|E\|_{L^2}=1$, and reconstruct $H$ by \eqref{eq:HG}.
To apply Lemma~\ref{lem:KG}(ii), we estimate the two terms in $E\,\pt_wG_{\eps,\theta}$ separately.
For the compact part,
\[
 \|n_\eps E\mu_{u,\gamma}'\|_{L^2_{y,w}}
 =\|n_\eps E\|_{L^2_y}\|\mu_{u,\gamma}'\|_{L^2_w},
 \qquad
 \|n_\eps E\mu_{u,\gamma}'\|_{L^1_wL^2_y}
 =\|n_\eps E\|_{L^2_y}\|\mu_{u,\gamma}'\|_{L^1_w}.
\]
For the tail, we use the bound $\sup_y|\pt_w\Omega_\eps(y,w)|\leq C\langle w\rangle^{-6}$ to estimate
\begin{align*}
 \|E\,\pt_w\Omega_\eps\|_{L^1_wL^2_y}
 &\leq \|E\|_{L^2_y}
 \int_\R\sup_y|\pt_w\Omega_\eps(y,w)|\,\di w,\\
 \|E\,\pt_w\Omega_\eps\|_{L^2_{y,w}}^2
 &\leq \|E\|_{L^2_y}^2
 \int_\R\sup_y|\pt_w\Omega_\eps(y,w)|^2\,\di w.
\end{align*}
Combining these estimates, we obtain
\[
 E\,\pt_wG_{\eps,\theta}
 \in {L^2_{y,w}}\cap L^1_wL^2_y,
\]
so we may apply Lemma~\ref{lem:KG}(ii) and \eqref{eq:HG norm} to deduce $\rho_H\in L^2$ and $E'=\rho_H$, hence $E\in H^1(\R)\subset L^\infty(\R)$.
{Moreover, if $H=0$, then $E'=\rho_H=0$, so $E$ is constant and, since $E\in L^2(\R)$, we have $E=0$. This contradicts $\|E\|_{L^2}=1$, and therefore $H\ne0$.}
As $E$ and $G_{\eps,\theta}$ are real, we also see from \eqref{eq:HG} that $H$ is real.

\step{3}{}
{To prove the pointwise bound in \eqref{eq:pointwise}, we use \eqref{eq:HG} and observe that the compact part contributes only when $y_s$ lies in the support of $n_\eps$.} Taking $R$ such that $\spt n_\eps\subset[-R,R]$, \eqref{eq:ray} and \eqref{eq:flow} give, whenever $n_\eps(y_s)\ne0$,
\[
 |Z_\eps(y,w)|
 \leq e^{-\eps bs}\,
 \frac{\eps R+|w_s|}{\langle w\rangle}
 \leq\eps R+1,
\]
and, since $n_\eps$ is bounded and $|\mu_{u,\gamma}'(w_s)|\leq C\langle w_s\rangle^{-5}$, we obtain on these characteristics
\[
 |n_\eps(y_s)\mu_{u,\gamma}'(w_s)|
 \leq C\langle w\rangle^{-5}
 \leq C\Omega_\eps(y,w).
\]
For the tail, Lemma~\ref{lem:weight} gives
\[
 |\pt_w\Omega_\eps(y_s,w_s)|
 \leq C\langle w_s\rangle^{-1}\Omega_\eps(y_s,w_s)
 \leq C\,\Omega_\eps(y,w).
\]
Using these estimates, $E\in L^\infty$, and \eqref{eq:parameters} in \eqref{eq:HG}, we find
\[
 |H(y,w)|
 \leq\frac{C\,\|E\|_{L^\infty}}
 {\Lambda_0+\eps a}\,\Omega_\eps(y,w),
\]
and, since $\Omega_\eps\in L^1(\R^2)$, we also have $H\in L^1(\R^2)$ and $\rho_H\in L^1(\R)$.
{
Since $E\in H^1(\R)$, we have $E(y)\to0$ as $y\to\pm\infty$, and hence
\[
 \int_\R\rho_H(y)\,\di y
 =\lim_{R\to\infty}\bigl(E(R)-E(-R)\bigr)=0.
\]
Therefore, by \eqref{eq:field},
\[
 E(y)=\int_{-\infty}^y\rho_H(z)\,\di z=E[\rho_H](y).
\]

}

\step{4}{}
We set $\mu:=\mu_{u,\gamma}$ and write $H=H_{\rm core}+\theta H_{\rm tail}$ according to \eqref{eq:Gtheta}. Since $E\in H^1$ and $n_\eps$ is smooth and compactly supported, $n_\eps E,(n_\eps E)'\in L^1$. Absorbing the fixed factor $\alpha_{\eps,\theta}$ into the constant, we differentiate the characteristic formula \eqref{eq:HG} for $H_{\rm core}$ using
\[
 \pt_wy_s=-e^{\eps bs}d_\eps(s),
 \qquad
 \pt_ww_s=e^{\eps cs},
\]
and then change variables from $(y,w)$ to $(y_s,w_s)$ to obtain
\begin{align}
 \|\pt_wH_{\rm core}\|_{L^1}
 &\leq C\|(n_\eps E)'\|_{L^1}\|\mu'\|_{L^1}
 \int_0^\infty e^{-(\Lambda_0-2\eps)s}d_\eps(s)\,\di s\notag\\*
 &\quad+C\|n_\eps E\|_{L^1}\|\mu''\|_{L^1}
 \int_0^\infty e^{-(\Lambda_0-\eps)s}\,\di s<\infty.
 \label{eq:core}
\end{align}
For $H_{\rm tail}$, the chain rule gives
\[
 \bal
 \pt_w\bigl[E(y_s)\pt_w\Omega_\eps(y_s,w_s)\bigr]
 &=-e^{\eps bs}d_\eps(s)
 \bigl(E'\pt_w\Omega_\eps+E\pt_{yw}\Omega_\eps\bigr)(y_s,w_s)\\
 &\quad+e^{\eps cs}\bigl(E\pt_{ww}\Omega_\eps\bigr)(y_s,w_s),
 \eal
\]
so, using $A_\eps$ from \eqref{eq:Aeps} and the same change of variables, we deduce
\begin{align}
 \|\pt_wH_{\rm tail}\|_{L^1}
 &\leq C\int_0^\infty e^{-(\Lambda_0-2\eps)s}d_\eps(s)\,\di s
 \lf(
 \|E'\|_{L^2}\|A_\eps\|_{L^2}
 +\|E\|_{L^\infty}\|\pt_{yw}\Omega_\eps\|_{L^1}
 \ri)\notag\\
 &\quad+C\int_0^\infty e^{-(\Lambda_0-\eps)s}\,\di s\,
 \|E\|_{L^\infty}\|\pt_{ww}\Omega_\eps\|_{L^1}<\infty.
 \label{eq:tail}
\end{align}
{
Let $H_T$ be defined by \eqref{eq:HG} with the integral taken over $[0,T]$. By the estimates in Step~3 and in \eqref{eq:core} and \eqref{eq:tail}, for $T'>T$ we have
\[
 \begin{aligned}
 \|H-H_T\|_{L^1}&\leq Ce^{-(\Lambda_0+\eps a)T},\\
 \|\pt_wH_{T'}-\pt_wH_T\|_{L^1}
 &\leq C\int_T^{T'}\left(e^{-(\Lambda_0-2\eps)s}d_\eps(s)
 +e^{-(\Lambda_0-\eps)s}\right)\,\di s.
 \end{aligned}
\]
Both right-hand sides tend to zero as $T\to\infty$, so $H_T$ and $\pt_wH_T$ converge in $L^1(\R^2)$. Testing against functions in $C_c^\infty(\R^2)$ identifies the second limit with $\pt_wH$.

}

Having proved \eqref{eq:pointwise}, we return to the self-similar variables $(\xi,w)$ using \eqref{eq:conjugacy G}. The smoothness, strict positivity, and integrability of $G_{\eps,\theta}$ pass to the rescaled background, while $G_{\eps,\theta}\geq\theta\Omega_\eps$ gives the pointwise bound
\[
 |h_{\eps,\theta}(\xi,w)|
 \leq \frac{C_{\eps,\theta}\eps^2}{\theta}
 \overline F_{\eps,\theta}(\xi,w).
\]
\end{proof}

{
\begin{remark}\label{rem:continuity}
The eigenfunction $H=H_{\eps,\theta}$ constructed in Theorem~\ref{thm:tail} has a continuous representative, and $\rho_H$ is continuous and vanishes at infinity.
Indeed, since $E[\rho_H]\in H^1(\R)$ is continuous and $G_{\eps,\theta}$ is smooth, the integrand in \eqref{eq:HG} is continuous in $(y,w)$, while the estimates in Step~3 of the proof bound it by $Ce^{-(\Lambda_0+\eps a)s}$, with $\Lambda_0+\eps a>0$. Thus $H$ is continuous by dominated convergence.
Moreover, $|H(y,w)|\leq C\Omega_\eps(y,w)\leq C\langle w\rangle^{-5}$ and $\Omega_\eps(y,w)\to0$ as $|y|\to\infty$ for every fixed $w$, so the same argument shows that $\rho_H$ is continuous and vanishes at infinity.
\end{remark}
}

\section{Proof of Theorem~\ref{thm:main}}
\label{sec:proof}

To produce two nonnegative solutions with the same source, we take opposite perturbations of the background constructed in Section~\ref{sec:tails}, using the following form of Golovkin's identity \cite{Gol,DM25}. We write
\[
 B(F_1,F_2):=-E[\rho_{F_1}]\,\pt_wF_2.
\]

\begin{lemma}
\label{lem:Golovkin}
Let $\overline F\in C^\infty(\R^2)\cap L^1(\R^2)$ be nonnegative, fix the ion density $\rho_{\overline F}$, and assume that $\mathcal A_{\overline F}(\overline F)\in L^1(\R^2)$.
Let $\lambda>0$ and let $h$ be real-valued, with
\[
 h,\pt_wh\in L^1(\R^2),
 \qquad \cL_{\overline F}h=\lambda h
\]
in the sense of distributions. For $\delta>0$, set $r(\tau):=\delta e^{\lambda\tau}h$.
Then, for every $\tau\leq0$, we have
\be\label{eq:golovkin}
 \pt_\tau(\overline F\pm r)
 -\mathcal A_{\overline F}(\overline F\pm r)
 =-\mathcal A_{\overline F}(\overline F)-B(r,r).
\ee
In particular, both branches have the same additive source in $L^1(\R^2)$. If $|r(\tau)|\leq\overline F$ almost everywhere for every $\tau\leq0$, they are nonnegative.
\end{lemma}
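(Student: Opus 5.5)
The identity \eqref{eq:golovkin} is a purely algebraic expansion of $\mathcal A_{\overline F}$ around $\overline F$, so the plan is to expand both sides and match terms. Write $F=\overline F\pm r$. Since the ion density is fixed at $\rho_{\overline F}$, we have $\rho_F-\rho_{\overline F}=\pm\rho_r$. Hence, by \eqref{eq:AF},
\[
 \mathcal A_{\overline F}(\overline F\pm r)
 =\mathcal A_{\overline F}(\overline F)
 \pm\bigl(-w\pt_\xi r+b\xi\pt_\xi r+cw\pt_wr-ar-E[\rho_r]\pt_w\overline F\bigr)
 -E[\rho_r]\pt_w r .
\]
The middle bracket is $\cL_{\overline F}r$ by \eqref{eq:linearization}, and the last term is $B(r,r)$, which is even in the sign. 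Thus
\[
 \mathcal A_{\overline F}(\overline F\pm r)=\mathcal A_{\overline F}(\overline F)\pm\cL_{\overline F}r+B(r,r).
\]
On the other side, $\overline F$ is independent of $\tau$. So $\pt_\tau(\overline F\pm r)=\pm\lambda\delta e^{\lambda\tau}h=\pm\delta e^{\lambda\tau}\cL_{\overline F}h=\pm\cL_{\overline F}r$, using the eigenvalue equation and the linearity of $\cL_{\overline F}$. Subtracting, the $\pm\cL_{\overline F}r$ terms cancel, and we obtain $-\mathcal A_{\overline F}(\overline F)-B(r,r)$, which is \eqref{eq:golovkin}.

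The step that needs care is making sense of each term as a distribution and justifying the use of the eigenvalue equation and the quadratic term. The eigenvalue equation is given only in the sense of distributions. I would test everything against $\varphi\in C_c^\infty(\R^2)$, so that the transport terms $-w\pt_\xi h+b\xi\pt_\xi h+cw\pt_wh-ah$ are distributional derivatives of $L^1_{\rm loc}$ functions multiplied by smooth coefficients. The term $E[\rho_h]\pt_w\overline F$ is a genuine function: $h\in L^1$ gives $\rho_h\in L^1(\R)$, hence $E[\rho_h]\in L^\infty$ with $\|E[\rho_h]\|_{L^\infty}\le\frac12\|\rho_h\|_{L^1}$.

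For the quadratic term, $B(r,r)=-\delta^2e^{2\lambda\tau}E[\rho_h]\pt_wh$ is a product of a bounded function of $\xi$ alone with $\pt_wh\in L^1(\R^2)$. Therefore it is an honest $L^1(\R^2)$ function, with norm at most $\frac12\delta^2e^{2\lambda\tau}\|h\|_{L^1}\|\pt_wh\|_{L^1}$. This is where the hypothesis $\pt_wh\in L^1$ enters, and I expect it to be the only non-formal point. Note also that $E[\rho_h]$ does not depend on $w$, so $E[\rho_h]\pt_wh=\pt_w(E[\rho_h]h)$ and the distributional product is unambiguous.

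The common source is then $\mathscr S(\tau)=-\mathcal A_{\overline F}(\overline F)-B(r(\tau),r(\tau))$. It lies in $L^1(\R^2)$ by the assumption on $\mathcal A_{\overline F}(\overline F)$ and the bound above, uniformly for $\tau\le0$, and it is continuous in $\tau$. The two right-hand sides coincide because $B(r,r)=B(-r,-r)$. Finally, nonnegativity is immediate: if $|r(\tau)|\le\overline F$ a.e., then $\overline F\pm r(\tau)\ge\overline F-|r(\tau)|\ge0$.
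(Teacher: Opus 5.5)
Your proposal is correct and follows essentially the same route as the paper. Both expand $\mathcal A_{\overline F}(\overline F\pm r)=\mathcal A_{\overline F}(\overline F)\pm\cL_{\overline F}r+B(r,r)$, cancel the linear term against $\pt_\tau r=\lambda r=\cL_{\overline F}r$, and place the common source in $L^1$ via $\|B(h,h)\|_{L^1}\leq\|E[\rho_h]\|_{L^\infty}\|\pt_wh\|_{L^1}\leq\frac12\|h\|_{L^1}\|\pt_wh\|_{L^1}$; your extra remarks on interpreting each term distributionally only make explicit what the paper leaves implicit.
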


\begin{proof}
Since the ion density is fixed, we have
\[
 \bal
 \rho_{\overline F\pm r}-\rho_{\overline F}&=\pm\rho_r,\\
 -E[\rho_{\overline F\pm r}-\rho_{\overline F}]\,\pt_w(\overline F\pm r)
 &=\mp E[\rho_r]\,\pt_w\overline F-E[\rho_r]\,\pt_wr,
 \eal
\]
so, using \eqref{eq:AF} and \eqref{eq:linearization}, we obtain
\[
 \bal
 \mathcal A_{\overline F}(\overline F\pm r)
 &=\mathcal A_{\overline F}(\overline F)\pm\cL_{\overline F}r+B(r,r),\\
 \pt_\tau r&=\lambda r=\cL_{\overline F}r,
 \eal
\]
and subtracting the first identity from $\pt_\tau(\overline F\pm r)=\pm\pt_\tau r$ gives \eqref{eq:golovkin}. By the definition \eqref{eq:field}, we have
\be\label{eq:quadratic}
 \|B(h,h)\|_{L^1}
 \leq\|E[\rho_h]\|_{L^\infty}\|\pt_wh\|_{L^1}
 \leq\frac12\|h\|_{L^1}\|\pt_wh\|_{L^1}<\infty,
\ee
which, together with $\mathcal A_{\overline F}(\overline F)\in L^1$, proves that the source belongs to $L^1(\R^2)$.
\end{proof}

\proofof{Theorem~\ref{thm:main}}
For a fixed finite $p\in[1,\infty)$, we choose $b>\max\{3,2p\}$ so that the source will vanish in $L^1$ at the initial time and the spatial densities will remain bounded in $L^p$.
We then set $c=b-1$ and $a=-b-1$, choose the remaining parameters as in Theorem~\ref{thm:tail}, and write
\[
 G:=G_{\eps,\theta},\qquad
 \overline F(\xi,w):=\eps^{-2}G(\xi/\eps,w),
 \qquad
 h(\xi,w):=H_{\eps,\theta}(\xi/\eps,w).
\]
In self-similar variables, the electric field of $\overline F$ vanishes by the choice of the ion density, and the linearized operator around $\overline F$ has the eigenvalue $\lambda=\Lambda_0/\eps$ with real eigenfunction $h$.

\step{1}{}
By Theorem~\ref{thm:tail}, there exists $C_h>0$ such that
\[
 |h(\xi,w)|\leq C_h\,\Omega_\eps(\xi/\eps,w),
 \qquad
 \overline F(\xi,w)
 \geq\eps^{-2}\theta\,\Omega_\eps(\xi/\eps,w),
\]
so we choose $\delta>0$ such that $\delta C_h\leq\eps^{-2}\theta$ and, for $\tau\leq0$, set
\be\label{eq:branches}
 r(\tau):=\delta e^{\lambda\tau}h,
 \qquad
 F_\pm(\tau):=\overline F\pm r(\tau).
\ee
Since $\lambda>0$ and $\tau\leq0$, this choice ensures that $|r(\tau)|\leq\overline F$, so both branches are nonnegative. Applying Lemma~\ref{lem:Golovkin}, we obtain the common self-similar source
\be\label{eq:source}
 \mathscr S(\tau)
 :=-\mathcal A_{\overline F}(\overline F)
 +\delta^2e^{2\lambda\tau}E[\rho_h]\,\pt_wh.
\ee
We verify the integrability assumptions of Lemma~\ref{lem:Golovkin} in Step~3 below.

\step{2}{}
We write $H:=H_{\eps,\theta}$ and recall that Step~3 of the proof of Theorem~\ref{thm:tail} establishes the normalization of the field and the identity
\be\label{eq:neutrality}
 \iint_{\R^2}H(y,w)\,\di y\,\di w=0,
\ee
and the fixed spatial dilation gives
\[
 \rho_h(\xi)=\rho_H(\xi/\eps),
 \qquad
 E[\rho_h](\xi)=\eps E[\rho_H](\xi/\eps).
\]

\step{3}{}
Using \eqref{eq:moments} for the compact part of $G$ and \eqref{eq:Omega moments} and \eqref{eq:Omega derivatives} for the tail, we find
\[
 G,\quad w\,\pt_yG,\quad y\,\pt_yG,\quad
 w\,\pt_wG\in L^1(\R^2),
\]
and since the field vanishes at $\overline F$, these estimates imply, after the spatial dilation by $\eps$, that $\mathcal A_{\overline F}(\overline F)\in L^1(\R^2)$.
Theorem~\ref{thm:tail} also ensures that $h,\pt_wh\in L^1$, so, using the quadratic estimate \eqref{eq:quadratic} in \eqref{eq:source}, we obtain
\be\label{eq:S norm}
 \|\mathscr S(\tau)\|_{L^1}
 \leq C_0+C_1e^{2\lambda\tau},
 \qquad \tau\leq0.
\ee

\step{4}{}
For $0<t\leq1$, we define
\be\label{eq:fpm}
 \bal
 f_\pm(t,x,v)
 &:=t^aF_\pm(\log t,xt^{-b},vt^{-c}),\\
 \r_i(t,x)
 &:=t^{-2}\rho_{\overline F}(xt^{-b}),\\
 S(t,x,v)
 &:=t^{a-1}\mathscr S(\log t,xt^{-b},vt^{-c}).
 \eal
\ee
The compact part of $G$ has a smooth spatial density, while for the tail we have, for every integer $j\geq0$,
\[
 |\pt_y^j\Omega_\eps(y,w)|\leq C_j\eps^j\langle w\rangle^{-5-j},
\]
so we can differentiate under the velocity integral and deduce that $\rho_{\overline F}$ is smooth, while its nonnegativity follows from $\overline F\geq0$; therefore $\r_i\in C^\infty((0,1]\times\R)$ and $\r_i\geq0$.
The electric fields of the two branches are
\be\label{eq:Epm}
 E_\pm(t,x)=\pm\,\delta\, t^{b-2+\lambda}E[\rho_h](xt^{-b}).
\ee
Indeed,
\[
 \pt_xE_\pm(t,x)
 =\pm\,\delta\, t^{\lambda-2}\rho_h(xt^{-b})
 =\r_{f_\pm}(t,x)-\r_i(t,x),
\]
and, integrating in $x$ and using \eqref{eq:neutrality}, we find
\[
 \int_\R\bigl(\rho_{f_\pm}(t,x)-\rho_i(t,x)\bigr)\,\di x
 =\pm\delta\eps\,t^{b-2+\lambda}\int_\R\rho_H(y)\,\di y=0.
\]
{By \eqref{eq:field}, we also have $E_\pm(t,x)\to0$ as $x\to\pm\infty$.} Using the scaling relations \eqref{eq:scaling} and the common-source identity \eqref{eq:golovkin}, we conclude that both $f_+$ and $f_-$ solve \eqref{eq:vp} with the same $\r_i$ and $S$, and they are distinct because $h\ne0$.

\step{5}{}
Changing variables in phase space, we obtain
\be\label{eq:mass}
 \|f_\pm(t)\|_{L^1}
 \leq t^{b-2}\lf(
 \eps^{-1}\|G\|_{L^1}
 +\delta\eps\, t^\lambda\|H_{\eps,\theta}\|_{L^1}
 \ri).
\ee
Since $\tau\mapsto F_\pm(\tau)$ is continuous in $L^1$ and the dilation group is strongly continuous on $L^1(\R^2)$, we deduce that $t\mapsto f_\pm(t)$ is continuous on $(0,1]$, while $b>2$ and \eqref{eq:mass} imply $\|f_\pm(t)\|_{L^1}\to0$ as $t\to0^+$, proving the continuity and initial condition in \eqref{eq:initial}. To treat the source, we use the same change of variables together with \eqref{eq:S norm} and find
\be\label{eq:S physical}
 \|S(t)\|_{L^1}
 =t^{b-3}\|\mathscr S(\log t)\|_{L^1}
 \leq C\, t^{b-3}(1+t^{2\lambda}).
\ee
By continuity of $\tau\mapsto\mathscr S(\tau)$ and of the dilations in $L^1(\R^2)$, we have $S\in C((0,1];L^1(\R^2))$, while $b>3$ implies $\|S(t)\|_{L^1}\to0$ as $t\to0^+$, so defining $S(0):=0$ gives
\[
 S\in C([0,1];L^1(\R^2)).
\]
Finally, applying the change of variables to the first velocity moment, we obtain
\be\label{eq:moment}
 \iint_{\R^2}|v|f_\pm(t,x,v)\,\di x\,\di v
 \leq t^{2b-3}\lf(
 \eps^{-1}\iint_{\R^2}|w|G(y,w)\,\di y\,\di w
 +\delta\eps\, t^\lambda
 \iint_{\R^2}|w||H_{\eps,\theta}(y,w)|\,\di y\,\di w
 \ri),
\ee
and both integrals are finite by \eqref{eq:Omega moments}, \eqref{eq:moments}, and \eqref{eq:pointwise}.

\step{6}{}
Since $\Omega_\eps(y,w)\leq\langle w\rangle^{-5}$,
\[
 \sup_{y\in\R}\int_\R\Omega_\eps(y,w)\,\di w<\infty.
\]
Combining the definition \eqref{eq:Gtheta} of $G$, the pointwise bound \eqref{eq:pointwise}, and the fixed spatial dilation, we deduce
\[
 \rho_{\overline F},\rho_h\in L^1(\R)\cap L^\infty(\R),
\]
so, by interpolation, both densities belong to $L^p(\R)$.
We then integrate \eqref{eq:fpm} in velocity to obtain
\be\label{eq:rhopm}
 \r_{f_\pm}(t,x)
 =t^{-2}\bigl[\rho_{\overline F}\pm\delta t^\lambda\rho_h\bigr](xt^{-b}),
\ee
and hence, using \eqref{eq:rho}, we get
\[
 \|\r_{f_\pm}(t)\|_{L^p}
 \leq t^{-2+b/p}
 \bigl(\|\rho_{\overline F}\|_{L^p}+\delta t^\lambda\|\rho_h\|_{L^p}\bigr)
 \leq C,
 \qquad 0<t\leq1,
\]
where the last inequality follows from $b\geq2p$ and $0<t\leq1$.

\step{7}{}
{To verify the weak formulation up to $t=0$, we first estimate $E_\pm f_\pm$.}
Using \eqref{eq:Epm} and $E[\rho_h]\in L^\infty$, together with \eqref{eq:mass} for the product, we obtain
\[
 \|E_\pm(t)\|_{L^\infty}\leq Ct^{b-2+\lambda},
 \qquad
 \|E_\pm(t)f_\pm(t)\|_{L^1}\leq Ct^{2b-4+\lambda},
\]
so, combining these bounds with \eqref{eq:mass}, \eqref{eq:moment}, and \eqref{eq:S physical}, we deduce that $f_\pm$, $vf_\pm$, $E_\pm f_\pm$, and $S$ are locally integrable up to $t=0$. Since $E_\pm$ does not depend on $v$, we write the force term as $E_\pm\pt_vf_\pm=\pt_v(E_\pm f_\pm)$. The ion density satisfies
\[
 \|\r_i(t)\|_{L^1}=t^{b-2}\|\rho_{\overline F}\|_{L^1},
\]
and is therefore also locally integrable up to $t=0$. We can now test the kinetic equation on $(\sigma,1)\times\R^2$ and let $\sigma\to0^+$, using the integrability estimates to pass to the limit in each space-time term and $\|f_\pm(\sigma)\|_{L^1}\to0$ to see that the time-boundary term vanishes. Together with the Poisson equation, this proves that both solutions satisfy \eqref{eq:vp} in the sense of distributions with the common zero initial datum.
\end{proof}

\appendix
\section{A uniqueness criterion and its temporal endpoint}
\label{sec:uniqueness}

We prove a weak-strong uniqueness criterion for \eqref{eq:vp} with a common additive source and a common prescribed ion density. The source may change sign, and the ion density may depend on time.
The proof uses the relation between the electric field and the spatial transport distance in one dimension, together with the second-order structure of the characteristics. Weak-strong estimates for the unforced one-dimensional equation were obtained in \cite{Hau13}, and they have been 
{adapted in \cite{HKI1} to study the quasineutral limit for the Vlasov-Poisson system with massless electrons.}
{We shall then use the solutions of Theorem~\ref{thm:main} to show that the time weight in this criterion is optimal.}

\subsection{The criterion and the role of the ion density}

We use the weighted space
\[
 L^1_1(\R^2)
 :=\left\{g:\ \|g\|_{L^1_1}
 :=\iint_{\R^2}(1+|x|+|v|)|g(x,v)|\,\di x\,\di v<\infty\right\}.
\]

\begin{theorem}[Weak-strong uniqueness]
\label{thm:uniqueness}
Let $T>0$, let $\rho_i\geq0$ belong to $L^1((0,T)\times\R)$, and let $S\in L^1((0,T)\times\R^2)$. Let $f_1,f_2\geq0$ be distributional solutions of \eqref{eq:vp}, with the same $S$, the same $\rho_i$, and the electric fields
\[
 E_j=E[\rho_{f_j}-\rho_i],\qquad j=1,2,
\]
normalized by \eqref{eq:field}. Suppose that
\be\label{eq:class}
 f_j\in C([0,T];L^1_1(\R^2)),\qquad f_1(0)=f_2(0),
\ee
and that one solution, say $f_1$, satisfies
\be\label{eq:uniqueness}
 E_1\in L^1(0,T;W^{1,\infty}(\R)),
 \qquad
 \int_0^T t\,\|\rho_{f_1}(t)\|_{L^\infty}\,\di t<\infty.
\ee
Then $f_1=f_2$ on $[0,T]$.
\end{theorem}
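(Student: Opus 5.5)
The plan is a duality (Lagrangian test-function) argument along the characteristics of the strong solution $f_1$. The time weight $t$ in \eqref{eq:uniqueness} should come from the factor $t-s$ that appears when a purely spatial test function is transported backward and differentiated in $v$.

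\emph{Difference equation.} Set $g:=f_2-f_1$. Since $S$ and $\rho_i$ are common, the source cancels, and $g(0)=0$ with
\[
 \pt_tg+v\,\pt_xg+E_1\,\pt_vg=-(E_2-E_1)\,\pt_vf_2
\]
in the sense of distributions. By \eqref{eq:field}, $E_2-E_1=E[\rho_g]$, so the ion density also disappears.

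\emph{Backward flow of $f_1$.} Because $E_1\in L^1(0,T;W^{1,\infty})$, the field $(v,E_1)$ has a unique Lipschitz flow $(X_{t,s},V_{t,s})$. Its variations in $v$ satisfy
\[
 |\pt_vX_{t,s}|\le C(t-s),\qquad |\pt_vV_{t,s}|\le C,
\]
with $C=\exp\bigl(C\int_0^T(1+\|\pt_xE_1\|_{L^\infty})\bigr)$. This comes from Grönwall on $\ddot X=E_1(X)$ with $X=0$, $\dot X=1$ at $s=t$.

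\emph{Distance and Duhamel bound.} For a Lipschitz test function $\varphi=\varphi(x)$ with $\|\varphi'\|_\infty\le1$, set $\psi_s=\varphi(X_{t,s})$. The continuity $f_j\in C([0,T];L^1_1)$ justifies testing the weak formulation with $\psi_s$ after mollification. This gives
\[
 \int\varphi\,\rho_g(t)=\int_0^t\!\!\iint E[\rho_g]\,f_2\,\pt_v\psi_s\,\di x\,\di v\,\di s .
\]
The natural distance is $d(t):=W_1(\rho_{f_1}(t),\rho_{f_2}(t))$. In one dimension this equals $\|E_2(t)-E_1(t)\|_{L^1}$, once we check that the masses agree. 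Mass equality follows from the equation by testing with $1$, since the source is common. Split $f_2=f_1+g$. The $f_1$ part is bounded by
\[
 C\int_0^t(t-s)\,\|\rho_{f_1}(s)\|_{L^\infty}\,d(s)\,\di s ,
\]
using $|\pt_v\psi_s|\le C(t-s)$ and $\|E[\rho_g]f_1\|_{L^1}\le\|E[\rho_g]\|_{L^1}\|\rho_{f_1}\|_{L^\infty}$.

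\emph{Main obstacle: the quadratic term.} The remaining term is $\int_0^t\!\iint E[\rho_g]\,g\,\pt_v\psi_s$, and here only $f_1$ is assumed to have bounded density. I would first try to avoid it by a symmetric Lagrangian coupling in the spirit of Hauray \cite{Hau13}. Concretely, represent $f_2$ through its own (possibly non-unique) characteristics, using that in 1D $E_2\in L^\infty$ is a BV function of $x$; superposition then gives a measure on trajectories. I would compare each $f_2$-trajectory with the $f_1$-trajectory from the same point, which requires first matching the source contributions of $S$ through a common coupling of the particles it creates at time $s$. The force difference $E_2(X^2)-E_1(X^1)$ is split as $\bigl(E_1(X^2)-E_1(X^1)\bigr)+(E_2-E_1)(X^2)$. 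The first piece is controlled by the Lipschitz bound. For the second, use the 1D identity $\int|E_2-E_1|(x)\,\rho_{f_2}\,\di x\le\|E_2-E_1\|_{L^1}\|\rho_{f_1}\|_\infty+\tfrac12\|E_2-E_1\|_{L^\infty}\|\rho_g\|_{L^1}$. The aim is to bound the last product by $d$ using monotonicity (the sign structure of $\sgn$, as in Hauray's one-dimensional estimate). I expect this step to be the hardest.

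\emph{Closing.} Either route should end in
\[
 d(t)\le C\int_0^t(t-s)\,a(s)\,d(s)\,\di s,\qquad a(s)=1+\|\rho_{f_1}(s)\|_{L^\infty}+\|\pt_xE_1(s)\|_{L^\infty}.
\]
A plain Grönwall step is not enough, because the kernel is $t-s$ rather than $s$. I would set $D(t):=\sup_{s\le t}d(s)/s$, which is finite because the transport over a short time moves mass by $O(s)$. Using $(t-s)\le t$ then gives $D(t)\le C\,D(t)\int_0^t s\,a(s)\,\di s$. By \eqref{eq:uniqueness} the last integral is small for small $t$, which forces $D=0$ on a short interval. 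On later intervals $t$ is bounded away from zero, so the standard Grönwall lemma applies there. Hence $d\equiv0$, so $E_1=E_2$. Then $f_1$ and $f_2$ solve the same linear transport equation with a Lipschitz field and the same source and datum, and uniqueness for that linear problem gives $f_1=f_2$.
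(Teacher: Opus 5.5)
Your architecture is the right one: the backward flow of $E_1$, the factor $t-s$ from $\pt_vX$, the distance $d(t)=\|E_2(t)-E_1(t)\|_{L^1}$, and a Gr\"onwall argument on $d(t)/t$. However, the step you single out as the main obstacle is left open, and neither route you sketch closes it. The last term $\tfrac12\|E_2-E_1\|_{L^\infty}\|\rho_g\|_{L^1}$ in your inequality is not bounded by a multiple of $d$. So that inequality cannot produce a linear Volterra estimate, and no amount of monotonicity applied \emph{after} putting absolute values on $\rho_g$ will recover one. The symmetric Lagrangian coupling is not carried out, and it is poorly suited to this setting. First, $S$ is a signed $L^1$ source, so $f_2$ need not be a superposition of trajectories, and removal of mass by $S^-$ cannot be matched particle-by-particle. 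Second, $E_2$ is only bounded and continuous in $x$, so its characteristics need not be unique.

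The missing idea is that the one-dimensional sign structure gives an exact identity, which must be used before taking absolute values of $\rho_g$. With $e:=E_2-E_1=E[\rho_g]$, equality of masses gives $e(\pm\infty)=0$. Moreover $e$ is bounded and absolutely continuous with $e'=\rho_{f_2}-\rho_{f_1}$, so
\[
 \int_\R |e|\,(\rho_{f_2}-\rho_{f_1})\,\di x=\int_\R |e|\,e'\,\di x=\tfrac12\bigl[e|e|\bigr]_{-\infty}^{+\infty}=0 .
\]
The term $\int_\R|e|\rho_g\,\di x$ in your inequality therefore vanishes identically. There is then no need to split $f_2=f_1+g$ or to couple trajectories: using $f_2\geq0$ and $|\pt_v\psi_s|\leq C(t-s)$, you get directly
\[
 \Bigl|\iint_{\R^2} e\,f_2\,\pt_v\psi_s\,\di x\,\di v\Bigr|
 \leq C(t-s)\int_\R|e|\rho_{f_2}\,\di x
 =C(t-s)\int_\R|e|\rho_{f_1}\,\di x
 \leq C(t-s)\|\rho_{f_1}(s)\|_{L^\infty}d(s).
\]
This is exactly the Volterra inequality you want; it is the identity \eqref{eq:swap} used in the paper. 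With it, your closing argument goes through.

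To justify that $d(s)/s$ is bounded, integrate the difference equation in $v$. This gives $\pt_t\rho_g+\pt_xj_g=0$ with $j_g=\int_\R vg\,\di v\in C([0,T];L^1(\R))$, which holds thanks to the first velocity moment. Hence $e(t)=-\int_0^tj_g(s)\,\di s$ and $d(t)\leq t\sup_s\|j_g(s)\|_{L^1}$.
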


The density condition in \eqref{eq:uniqueness} also has a formulation in terms of the prescribed data.

\begin{corol}
\label{cor:ions}
{Assume all the hypotheses of Theorem~\ref{thm:uniqueness}, except the density condition in \eqref{eq:uniqueness}. Then that condition is equivalent to}
\be\label{eq:ion criterion}
 \int_0^T t\,\|\rho_i(t)\|_{L^\infty}\,\di t<\infty.
\ee
In particular, \eqref{eq:ion criterion} can replace the density condition in the uniqueness theorem.
\end{corol}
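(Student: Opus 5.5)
The corollary asks us to show that, when $E_1\in L^1(0,T;W^{1,\infty}(\R))$, the quantities $t\|\rho_{f_1}(t)\|_{L^\infty}$ and $t\|\rho_i(t)\|_{L^\infty}$ are integrable on $(0,T)$ simultaneously. The plan is to use the Poisson equation to control the difference of the two densities.

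By \eqref{eq:field} and the definition $E_1=E[\rho_{f_1}-\rho_i]$, we have $\pt_xE_1=\rho_{f_1}-\rho_i$ in the sense of distributions, for almost every $t$. The hypothesis $E_1(t)\in W^{1,\infty}(\R)$ for a.e.\ $t$ then gives
\[
 \|\rho_{f_1}(t)-\rho_i(t)\|_{L^\infty}=\|\pt_xE_1(t)\|_{L^\infty}\leq\|E_1(t)\|_{W^{1,\infty}}.
\]
Some care is needed because $\rho_{f_1}(t)$ and $\rho_i(t)$ are only known to lie in $L^1$ a priori. The distributional derivative of $E_1(t)$ is an $L^1$ function, and when $E_1(t)$ is Lipschitz this derivative coincides with the a.e.\ derivative, which is bounded. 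Hence the $L^1$ function $\rho_{f_1}(t)-\rho_i(t)$ is essentially bounded, with the stated bound. If one of the two densities is essentially unbounded at some time, the other one is too, so the quantities $\|\rho_{f_1}(t)\|_{L^\infty}$ and $\|\rho_i(t)\|_{L^\infty}$ are finite or infinite together. By the triangle inequality, for a.e.\ $t$,
\[
 \bigl|\,\|\rho_{f_1}(t)\|_{L^\infty}-\|\rho_i(t)\|_{L^\infty}\bigr|\leq\|E_1(t)\|_{W^{1,\infty}},
\]
understood in $[0,\infty]$ with the above convention.

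Next we multiply this inequality by $t\leq T$ and integrate:
\[
 \int_0^T t\|\rho_{f_1}(t)\|_{L^\infty}\,\di t\leq\int_0^T t\|\rho_i(t)\|_{L^\infty}\,\di t+T\|E_1\|_{L^1(0,T;W^{1,\infty})},
\]
and the symmetric inequality holds with the roles of $\rho_{f_1}$ and $\rho_i$ exchanged. The integral of $t\|E_1(t)\|_{W^{1,\infty}}$ is finite by hypothesis, so the two conditions are equivalent. The final claim follows directly: under \eqref{eq:ion criterion}, the density condition in \eqref{eq:uniqueness} holds, and Theorem~\ref{thm:uniqueness} applies.

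The only delicate point is measurability in $t$ of $t\mapsto\|\rho(t)\|_{L^\infty}$. This follows from writing the essential supremum as a supremum over a countable dense family of test functions in $L^1$, using joint measurability of $\rho_i$ and of $\rho_{f_1}$, where the latter comes from $f_1\in C([0,T];L^1)$.
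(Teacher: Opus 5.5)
Your proposal is correct and follows the paper's proof: both use $\pt_xE_1(t)=\rho_{f_1}(t)-\rho_i(t)\in L^\infty(\R)$ for almost every $t$, apply the triangle inequality in both directions, multiply by $t\leq T$, and integrate using $E_1\in L^1(0,T;W^{1,\infty}(\R))$. Your extra remarks are harmless additions to what the paper leaves implicit: the extended-valued triangle inequality, and measurability of $t\mapsto\|\rho(t)\|_{L^\infty}$, where normalized indicator functions of rational intervals together with the Lebesgue differentiation theorem give a concrete countable family that works.
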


\begin{proof}
Since $\rho_{f_1}=\rho_i+\pt_xE_1$, we have, for almost every $t$,
\[
 \|\rho_{f_1}(t)\|_{L^\infty}
 \leq\|\rho_i(t)\|_{L^\infty}+\|\pt_xE_1(t)\|_{L^\infty},
 \qquad
 \|\rho_i(t)\|_{L^\infty}
 \leq\|\rho_{f_1}(t)\|_{L^\infty}+\|\pt_xE_1(t)\|_{L^\infty}.
\]
We multiply by $t$ and integrate, using $\int_0^Tt\|\pt_xE_1(t)\|_{L^\infty}\,\di t \leq T\|\pt_xE_1\|_{L^1_tL^\infty_x}<\infty$.
\end{proof}

{
\begin{remark}
For $A,B>0$, the transformation
\[
 \bal
 f_{A,B}(t,x,v)&:=AB\,f(At,Bx,(B/A)v),\\
 S_{A,B}(t,x,v)&:=A^2B\,S(At,Bx,(B/A)v),\\
 \rho_{i,A,B}(t,x)&:=A^2\rho_i(At,Bx),
 \qquad E_{A,B}(t,x):=\frac{A^2}{B}E(At,Bx)
 \eal
\]
preserves \eqref{eq:vp} and the normalization \eqref{eq:field}. In fact, $\rho_{f_{A,B}}(t,x)=A^2\rho_f(At,Bx)$, and every term of the kinetic equation gains the factor $A^2B$. Changing variables $s=At$, we obtain
\[
 \int_0^{T/A}t\|\rho_{f_{A,B}}(t)\|_{L^\infty}\,\di t
 =\int_0^T s\|\rho_f(s)\|_{L^\infty}\,\di s.
\]
The same identity holds for the ion density, so the weighted integrals in \eqref{eq:uniqueness} and \eqref{eq:ion criterion} are invariant under these Vlasov-Poisson dilations.
\end{remark}
}

\subsection{{Preliminary estimates}}

The first estimate identifies the quantity that we will control and allows us to use the density of the solution whose field is regular.

\begin{lemma}
\label{lem:field}
Let $\rho_1,\rho_2\geq0$ belong to $L^1(\R)$, have the same mass, and satisfy $\int_\R |x|(\rho_1+\rho_2)\,\di x<\infty$. Set $q=\rho_1-\rho_2$ and $e=E[q]$.
Then
\be\label{eq:field L1}
 e(x)=\int_{-\infty}^xq(y)\,\di y
     =-\int_x^\infty q(y)\,\di y,
 \qquad
 \|e\|_{L^1}\leq\int_\R |x||q(x)|\,\di x.
\ee
Moreover,
\be\label{eq:dual}
 \|e\|_{L^1}
 =\sup_{\substack{\varphi\in C^\infty(\R),\ \varphi'\in C_c^\infty(\R)\\
                         \|\varphi'\|_{L^\infty}\leq1}}
       \left|\int_\R\varphi(x)q(x)\,\di x\right|,
\ee
and
\be\label{eq:swap}
 \int_\R |e|\rho_2\,\di x=\int_\R |e|\rho_1\,\di x.
\ee
\end{lemma}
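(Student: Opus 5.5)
The plan is to prove the three assertions of Lemma~\ref{lem:field} in order, working with the primitive of $q$.

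\emph{Representation formulas.} Since $q\in L^1(\R)$ and $\int_\R q\,\di x=0$ (equal masses), the definition \eqref{eq:field} gives
\[
 e(x)=\tfrac12\int_{-\infty}^x q-\tfrac12\int_x^\infty q .
\]
Using $\int_x^\infty q=-\int_{-\infty}^x q$, both expressions in \eqref{eq:field L1} follow at once.

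\emph{The $L^1$ bound.} For $x>0$ I use the second formula, so $|e(x)|\le\int_x^\infty|q|$. For $x<0$ I use the first, so $|e(x)|\le\int_{-\infty}^x|q|$. Integrating in $x$ and applying Fubini turns each tail into a weight $|y|$, which gives $\|e\|_{L^1}\le\int_\R|y||q(y)|\,\di y$; this is finite by the moment assumption. Consequently $e\in L^1\cap L^\infty$ and $e$ is continuous.

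\emph{The dual formula \eqref{eq:dual}.} For admissible $\varphi$, $\varphi$ is constant outside a compact set. I integrate by parts:
\[
 \int\varphi q=\int\varphi e'=-\int\varphi' e .
\]
Boundary terms vanish because $e(\pm\infty)=0$ and $\varphi$ is bounded. Note that $\varphi$ is bounded, and the integral is meaningful because $q\in L^1$. This gives the inequality $\le\|e\|_{L^1}$. For the reverse, I choose $\varphi'_k\in C_c^\infty$ with $|\varphi'_k|\le1$ converging a.e.\ to $-\sgn e$ on $[-k,k]$. For instance, I mollify $\sgn e\cdot\mathbf 1_{[-k,k]}$ and then take $\varphi_k(x)=\int_0^x\varphi_k'$. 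Dominated convergence, using $e\in L^1$, yields $\int -\varphi_k' e\to\|e\|_{L^1}$.

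\emph{The swap identity \eqref{eq:swap}.} This is the step that needs care, since $|e|$ is not smooth. I compute
\[
 \int|e|(\rho_1-\rho_2)=\int|e|\,e' .
\]
Formally this equals $\int\tfrac12\partial_x(|e|e)=0$, because $|e|e\to0$ at $\pm\infty$. To justify it, I note that $e$ is absolutely continuous, with $e'=q\in L^1$ and $e$ bounded. Then $\Phi(e)=\tfrac12|e|e$ is the composition of a $C^1$ function (with derivative $|s|$) with an absolutely continuous function. So $\Phi(e)$ is absolutely continuous and $(\Phi(e))'=|e|e'$ a.e., with $|e|e'\in L^1$. Integrating over $[-R,R]$ and letting $R\to\infty$ gives zero. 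The remaining point is that $\int|e|\rho_j$ is finite, which holds since $e$ is bounded and $\rho_j\in L^1$. Hence the two integrals coincide.
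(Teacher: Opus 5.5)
Your proposal is correct and follows essentially the same route as the paper. You use the same splitting of $e$ into tails with Fubini for the $L^1$ bound, integration by parts plus a bounded smooth approximation of $\sgn e$ for \eqref{eq:dual}, and the primitive $\tfrac12 e|e|$ for \eqref{eq:swap}, where you also make explicit the chain rule for this absolutely continuous composition that the paper uses implicitly. The mismatch between mollifying $\sgn e\cdot\mathbf 1_{[-k,k]}$ and targeting $-\sgn e$ is immaterial because of the absolute value in \eqref{eq:dual}.
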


\begin{proof}
The two expressions for $e$ follow from \eqref{eq:field} and $\int q=0$. In particular, $e(\pm\infty)=0$ and $e'=q$. Integrating the first expression over $x<0$ and the second over $x>0$, and applying Fubini's theorem, we obtain
\[
 \int_{-\infty}^0|e(x)|\,\di x
 \leq\int_{-\infty}^0|y||q(y)|\,\di y,
 \qquad
 \int_0^\infty|e(x)|\,\di x
 \leq\int_0^\infty y|q(y)|\,\di y.
\]
This proves \eqref{eq:field L1}. Every test function in \eqref{eq:dual} is bounded, and integration by parts gives
\[
 \int_\R\varphi q\,\di x=-\int_\R\varphi'e\,\di x.
\]
The upper bound in \eqref{eq:dual} follows immediately. For the reverse bound, approximate $\sgn(e)$ by functions $\zeta_n\in C_c^\infty(\R)$ with $|\zeta_n|\leq1$ in $L^1(\R,|e|\,\di x)$, and take $\varphi_n(x)=\int_0^x\zeta_n(y)\,\di y$.

Finally, $e$ is bounded and absolutely continuous, so all the integrals in \eqref{eq:swap} are finite. Using $e'=\rho_1-\rho_2$, we find
\[
 \int_\R |e|(\rho_1-\rho_2)\,\di x
 =\int_\R |e|e'\,\di x
 =\frac12\bigl[e|e|\bigr]_{-\infty}^{+\infty}=0.
\]
\end{proof}

We next record the time factor obtained when a spatial test function is transported by a kinetic flow. Its proof also justifies the use of these tests for distributional solutions.

\begin{lemma}
\label{lem:test}
Let $E\in L^1(0,T;W^{1,\infty}(\R))$, let $g\in C([0,T];L^1_1(\R^2))$ with $g(0)=0$, and let $Q\in L^1((0,T)\times\R^2)$. Suppose that
\be\label{eq:difference}
 \pt_tg+v\pt_xg+\pt_v(Eg)=-\pt_vQ
\ee
in the sense of distributions. For every bounded $\varphi\in C^\infty(\R)$ with bounded derivative, and every $t\in[0,T]$, we have
\be\label{eq:test}
 \left|\iint_{\R^2}\varphi(x)g(t,x,v)\,\di x\,\di v\right|
 \leq C_T\|\varphi'\|_{L^\infty}
       \int_0^t(t-s)\|Q(s)\|_{L^1}\,\di s,
 \qquad
 C_T:=\exp\left(T\int_0^T\|\pt_xE(s)\|_{L^\infty}\,\di s\right).
\ee
\end{lemma}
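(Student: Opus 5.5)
The plan is a duality argument. We test the equation for $g$ against the solution of the backward transport equation whose final datum is $\varphi(x)$. Fix $t\in(0,T]$ and let $(X,V)(s;t,x,v)$ be the characteristic flow of $\dot X=V$, $\dot V=E(s,X)$. Since $E\in L^1(0,T;W^{1,\infty})$, this flow is well defined in the Carathéodory sense and is Lipschitz in $(x,v)$. Set $\Phi(s,x,v):=\varphi(X(t;s,x,v))$, so that
\[
 \pt_s\Phi+v\pt_x\Phi+E\pt_v\Phi=0,
 \qquad \Phi(t)=\varphi .
\]
Testing \eqref{eq:difference} with $\Phi$ on $(0,t)$ and using $g(0)=0$ should formally give
\[
 \iint\varphi\, g(t)\,\di x\,\di v=\int_0^t\iint Q\,\pt_v\Phi\,\di x\,\di v\,\di s .
\]
It then suffices to show
\[
 |\pt_v\Phi(s)|\leq \|\varphi'\|_{L^\infty}\,|\pt_vX(t;s,\cdot)|\leq C_T(t-s)\|\varphi'\|_{L^\infty}.
\]

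For the derivative bound, differentiate the flow with respect to the initial velocity at time $s$. With $\xi(r)=\pt_vX(r;s)$ and $\eta(r)=\pt_vV(r;s)$ we have
\[
 \dot\xi=\eta,\qquad \dot\eta=\pt_xE(r,X)\,\xi,\qquad \xi(s)=0,\quad \eta(s)=1 .
\]
Hence $\xi(r)=\int_s^r\eta$ and $\eta(r)=1+\int_s^r\pt_xE\,\xi$. Put $\kappa(r)=\|\pt_xE(r)\|_{L^\infty}$ and $M(r)=\sup_{[s,r]}|\eta|$. Then $|\xi(r)|\leq (r-s)M(r)$, and
\[
 M(r)\leq 1+\int_s^r \kappa(\sigma)(\sigma-s)M(\sigma)\,\di\sigma\leq 1+T\int_s^r\kappa M .
\]
Grönwall's inequality gives $M\leq \exp\bigl(T\int_0^T\kappa\bigr)=C_T$, and therefore $|\xi(t)|\leq C_T(t-s)$. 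This is exactly where the factor $(t-s)$ comes from: the second-order structure of the characteristics. The constant $C_T$ in \eqref{eq:test} matches this computation.

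The main obstacle is the rigorous justification, since $g$ is only a distributional solution in $C([0,T];L^1_1)$ and $\Phi$ is merely Lipschitz, not compactly supported, and not smooth in time. I would first regularize $E$ by mollifying in $x$ and $t$ to get $E_k$ with $\int\|\pt_xE_k\|_{L^\infty}\leq\int\|\pt_xE\|_{L^\infty}$. The corresponding $\Phi_k$ is then smooth, bounded by $\|\varphi\|_\infty$, and satisfies the same bound on $\pt_v\Phi_k$. I would also mollify $\Phi_k$ in time, if needed, and multiply it by a cutoff $\chi_R(x,v)$. Testing \eqref{eq:difference} with $\Phi_k\chi_R\,\mathbf 1_{[0,t]}$ produces four kinds of terms:
\begin{itemize}
\item the error $\iint\!\!\int g\,(E-E_k)\pt_v\Phi_k$, which tends to $0$ as $k\to\infty$ because $g\in L^\infty_tL^1$, $\pt_v\Phi_k$ is uniformly bounded and $E_k\to E$ in $L^1_tL^\infty_x$;
\item the cutoff errors $\iint g\,(v\pt_x\chi_R+E\pt_v\chi_R)\Phi_k$, which vanish as $R\to\infty$ thanks to the $L^1_1$ weight, since $v\,\pt_x\chi_R$ is bounded by $C|v|/R$ and supported where $|x|+|v|\ge R$;
\item the endpoint term at time $t$, which is handled by continuity of $g$ in $L^1$ (the time cutoff at $t$ is done with a Lebesgue-point or approximation argument);
\item the source term $\int Q\,\pt_v(\Phi_k\chi_R)$, which converges by dominated convergence.
\end{itemize}
Finally, $\Phi_k(t)=\varphi$ exactly, so after the limits only $\iint\varphi\, g(t)$ remains on the left, and the bound on $\pt_v\Phi_k$ yields \eqref{eq:test}.
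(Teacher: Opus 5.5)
Your proposal is correct and follows essentially the same route as the paper. Both arguments mollify $E$ so that $\int_0^T\|\partial_x E_k\|_{L^\infty}$ stays controlled, test with $\varphi(X_k(t;s,x,v))$ and cutoffs, bound $\partial_v X_k$ by $C_T(t-s)$ via Gr\"onwall, and remove the error $(E-E_k)g\,\partial_v\Phi_k$ using $E_k\to E$ in $L^1_tL^\infty_x$. The only cosmetic difference is that you run Gr\"onwall on $\sup_{[s,r]}|\partial_v V|$ rather than on $\partial_v X(r)/(r-s)$ as the paper does, which yields the same constant $C_T$.
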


\begin{proof}
We first regularize the field. Extend $E$ by zero outside $(0,T)$ and convolve with nonnegative smooth approximate identities in time and space. The resulting fields $E_n$ are smooth and satisfy
\be\label{eq:approx}
 \|E_n-E\|_{L^1_tL^\infty_x}\longrightarrow0,
 \qquad
 \int_0^T\|\pt_xE_n(s)\|_{L^\infty}\,\di s
 \leq\int_0^T\|\pt_xE(s)\|_{L^\infty}\,\di s.
\ee
For the spatial convolution, convergence follows from $\|E(s,\cdot-z)-E(s,\cdot)\|_{L^\infty} \leq |z|\|\pt_xE(s)\|_{L^\infty}$.
Time convolution converges in the Bochner space $L^1_tL^\infty_x$.

Let $(X_n(r;s,x,v),V_n(r;s,x,v))$ solve
\[
 \frac{\di X_n}{\di r}=V_n,\qquad
 \frac{\di V_n}{\di r}=E_n(r,X_n),\qquad
 (X_n,V_n)(s;s,x,v)=(x,v).
\]
The fields are bounded in space and have integrable spatial Lipschitz constants, so the flows are defined throughout $[0,T]$. We estimate their dependence on the initial velocity. Setting $m_n(r)=\pt_vX_n(r;s,x,v)$, we have
\[
 m_n(r)=r-s+\int_s^r(r-u)\pt_xE_n(u,X_n(u;s,x,v))m_n(u)\,\di u.
\]
For $r>s$, division by $r-s$ gives
\[
 \frac{|m_n(r)|}{r-s}
 \leq1+T\int_s^r\|\pt_xE_n(u)\|_{L^\infty}
                   \frac{|m_n(u)|}{u-s}\,\di u,
\]
where the quotient at $u=s$ is defined by its limit, equal to $1$. Gronwall's inequality and \eqref{eq:approx} imply
\be\label{eq:flow v}
 |\pt_vX_n(t;s,x,v)|\leq C_T(t-s).
\ee
The equation for $\pt_xX_n$ has initial values $1$ and $0$ in place of $0$ and $1$, and the same argument gives $|\pt_xX_n(t;s,x,v)|\leq C_T$.

For a fixed terminal time $t$, define the bounded test function
\[
 \psi_n(s,x,v)=\varphi(X_n(t;s,x,v)),\qquad 0\leq s\leq t.
\]
It satisfies
\[
 \pt_s\psi_n+v\pt_x\psi_n+E_n\pt_v\psi_n=0,
 \qquad \psi_n(t,x,v)=\varphi(x),
 \qquad
 |\pt_v\psi_n(s,x,v)|\leq C_T(t-s)\|\varphi'\|_{L^\infty}.
\]
To use it in \eqref{eq:difference}, take a smooth cutoff $\chi_R$ equal to $1$ on $[-R,R]$, supported in $[-2R,2R]$, with $|\chi_R'|\leq C/R$, and first test with $\psi_n(s,x,v)\chi_R(x)\chi_R(v)$. Time cutoffs can be removed using the continuity of $g$ in $L^1$ and the integrability of the fluxes. The terms in which a derivative falls on a spatial or velocity cutoff are bounded by
\[
 \frac{C\|\varphi\|_{L^\infty}}{R}
 \int_0^t\iint_{\R^2}
 \bigl(|v||g|+|E||g|+|Q|\bigr)\,\di x\,\di v\,\di s.
\]
This quantity tends to zero, since the first velocity moment of $g$ is uniformly bounded and $E\in L^1_tL^\infty_x$. Taking $R\to\infty$ for fixed $n$, we obtain
\be\label{eq:test identity}
 \iint_{\R^2}\varphi(x)g(t,x,v)\,\di x\,\di v
 =\int_0^t\iint_{\R^2}
       \bigl[Q+(E-E_n)g\bigr]\pt_v\psi_n\,\di x\,\di v\,\di s.
\ee
There is no initial term because $g(0)=0$. The contribution of the approximate field satisfies
\[
 \left|\int_0^t\iint_{\R^2}(E-E_n)g\pt_v\psi_n\,\di x\,\di v\,\di s\right|
 \leq C_TT\|\varphi'\|_{L^\infty}
       \sup_{0\leq s\leq T}\|g(s)\|_{L^1}
       \|E-E_n\|_{L^1_tL^\infty_x},
\]
which tends to zero. We bound the other term in \eqref{eq:test identity} by \eqref{eq:flow v} and take $n\to\infty$. This proves \eqref{eq:test} without requiring convergence of the derivatives of the approximate flows.
\end{proof}

\subsection{Proof of the uniqueness theorem}

\proofof{Theorem~\ref{thm:uniqueness}}
\step{1}{}
We write $\rho_j=\rho_{f_j}$ and set
\[
 g=f_1-f_2,\qquad q=\rho_1-\rho_2=\rho_g,
 \qquad e=E[q]=E_1-E_2.
\]
The source and the ion density cancel in the difference, and the kinetic equation becomes
\be\label{eq:g}
 \pt_tg+v\pt_xg+\pt_v(E_1g)=-\pt_v(ef_2),\qquad g(0)=0.
\ee
All the fluxes used below are integrable. Indeed, \eqref{eq:field} gives
\[
 \|E_j(t)\|_{L^\infty}
 \leq\frac12\bigl(\|f_j(t)\|_{L^1}+\|\rho_i(t)\|_{L^1}\bigr),
 \qquad
 \|e(t)\|_{L^\infty}\leq\frac12\|g(t)\|_{L^1}.
\]
The masses and first velocity moments of the solutions are uniformly bounded by \eqref{eq:class}. It follows that {$E_jf_j,E_1g,ef_2,vg\in L^1((0,T)\times\R^2)$}. In particular, the integrability of $ef_2$ does not use the density condition in \eqref{eq:uniqueness}.

Testing each equation with the cutoffs $\chi_R(x)\chi_R(v)$ used in Lemma~\ref{lem:test}, and then taking $R\to\infty$, we find
\[
 \iint_{\R^2}f_j(t)\,\di x\,\di v
 =\iint_{\R^2}f_j(0)\,\di x\,\di v
  +\int_0^t\iint_{\R^2}S(s)\,\di x\,\di v\,\di s.
\]
The transport cutoff errors tend to zero by the flux bounds just proved, while the source term converges because $S\in L^1$. {Since $f_1(0)=f_2(0)$, we obtain}
{
\[
 \iint_{\R^2}f_1(t)\,\di x\,\di v
 =\iint_{\R^2}f_2(t)\,\di x\,\di v,
 \qquad t\in[0,T],
\]
so Lemma~\ref{lem:field} applies to $\rho_1(t),\rho_2(t)$ and we can define}
\[
 A(t):=\|e(t)\|_{L^1_x}.
\]
The first spatial moment in \eqref{eq:class} and the bound in \eqref{eq:field L1} imply $e\in C([0,T];L^1(\R))$. To see the continuity directly, apply that bound to $q(t)-q(s)$, whose mass is zero, and use the weighted $L^1$ continuity of $g$.

\step{2}{}
Integrating \eqref{eq:g} in velocity, with a velocity cutoff followed by its removal, gives
\[
 \pt_tq+\pt_xj_g=0,\qquad
 j_g(t,x):=\int_\R v g(t,x,v)\,\di v.
\]
Here $j_g\in C([0,T];L^1(\R))$, and $j_g(0)=0$. Since $e'=q$ and $q(0)=0$, integration in time gives
\[
 \pt_x\left(e(t)+\int_0^t j_g(s)\,\di s\right)=0.
\]
The expression in parentheses belongs to $L^1(\R)$ and is spatially constant in the sense of distributions, so it is zero. Consequently,
\be\label{eq:e norm}
 e(t)=-\int_0^t j_g(s)\,\di s,
 \qquad
 A(t)\leq\int_0^t\|j_g(s)\|_{L^1}\,\di s.
\ee
In particular, $A(t)/t$ is bounded for $t>0$ and tends to zero as $t\to0^+$.

\step{3}{}
We apply Lemma~\ref{lem:test} to \eqref{eq:g}, with $E=E_1$ and $Q=ef_2$.
For every test function in \eqref{eq:dual}, it gives
\[
 \left|\int_\R\varphi(x)q(t,x)\,\di x\right|
 \leq C_T\int_0^t(t-s)\int_\R |e(s,x)|\rho_2(s,x)\,\di x\,\di s.
\]
Taking the supremum in \eqref{eq:dual}, and then using \eqref{eq:swap}, we obtain
\be\label{eq:Volterra}
 \bal
 A(t)
 &\leq C_T\int_0^t(t-s)\int_\R |e(s,x)|\rho_2(s,x)\,\di x\,\di s\\
 &=C_T\int_0^t(t-s)\int_\R |e(s,x)|\rho_1(s,x)\,\di x\,\di s\\
 &\leq C_T\int_0^t(t-s)\|\rho_1(s)\|_{L^\infty}A(s)\,\di s.
 \eal
\ee
The identity in the second line is what allows us to use the density of the same solution whose field transports the test functions.

Set $B(t)=A(t)/t$ for $t>0$ and $B(0)=0$. By \eqref{eq:e norm}, this is a bounded continuous function. Dividing \eqref{eq:Volterra} by $t$ and using $(t-s)/t\leq1$, we get
\[
 B(t)\leq C_T\int_0^t s\|\rho_1(s)\|_{L^\infty}B(s)\,\di s.
\]
The coefficient is integrable by \eqref{eq:uniqueness}. Gronwall's inequality implies $B=0$, and therefore $e=0$ on $[0,T]$.

\step{4}{}
It remains to recover $g=0$ from the equality of the fields. Equation \eqref{eq:g} now reduces to
\[
 \pt_tg+v\pt_xg+\pt_v(E_1g)=0,\qquad g(0)=0.
\]
Take any $\Phi\in C_c^\infty(\R^2)$ and repeat the approximation argument of Lemma~\ref{lem:test} with the terminal test
\[
 \Psi_n(s,x,v)=\Phi(X_n(t;s,x,v),V_n(t;s,x,v)).
\]
The full differential of the flow is bounded by $\exp(T+\int_0^T\|\pt_xE_1(s)\|_{L^\infty}\,\di s)$, uniformly in $n$.
There is no $Q$ term, and the remaining error is bounded by a constant times $\|E_1-E_{1,n}\|_{L^1_tL^\infty_x}\sup_s\|g(s)\|_{L^1}$, which tends to zero.
We conclude that $\iint\Phi g(t)=0$ for every such $\Phi$ and every $t\in[0,T]$.
Thus $f_1=f_2$, as claimed.
\end{proof}

\subsection{{Sharpness of the uniqueness criterion}}

{We now check that the solutions constructed in Section~\ref{sec:proof} satisfy all the assumptions of Theorem~\ref{thm:uniqueness} except the density condition in \eqref{eq:uniqueness}, and we compute the rate at which that condition fails.}
{
For a measurable function $k$ on $(0,1)$, we write
\[
 \|k\|_{L^{1,\infty}(0,1)}
 :=\sup_{\alpha>0}\alpha\,
       \bigl|\{t\in(0,1):|k(t)|>\alpha\}\bigr|.
\]
}

\begin{prop}
\label{prop:endpoint}
The solutions $f_\pm$ of Theorem~\ref{thm:main} satisfy
\[
 f_\pm\in C([0,1];L^1_1(\R^2)),\qquad f_\pm(0)=0,
 \qquad E_\pm\in L^1(0,1;W^{1,\infty}(\R)),
\]
their common ion density belongs to $L^1((0,1)\times\R)$, and their common source belongs to $C([0,1];L^1(\R^2))$. Also, there exists $m>0$ such that
\be\label{eq:limit}
 \lim_{t\to0^+}t^2\|\rho_{f_\pm}(t)\|_{L^\infty}=m,
 \qquad
 t^2\|\rho_i(t)\|_{L^\infty}=m\quad(0<t\leq1).
\ee
{
In particular,
\[
 t\|\rho_{f_\pm}(t)\|_{L^\infty},\quad t\|\rho_i(t)\|_{L^\infty}
 \in L^{1,\infty}(0,1)\setminus L^1(0,1),
\]
while
\[
 \int_0^1t^{1+\eta}\|\rho_{f_\pm}(t)\|_{L^\infty}\,\di t<\infty,
 \qquad
 \int_0^1t^{1+\eta}\|\rho_i(t)\|_{L^\infty}\,\di t=\frac m\eta,
 \qquad \eta>0,
\]
so the weight $t$ in \eqref{eq:uniqueness} or \eqref{eq:ion criterion} cannot be replaced by $t^{1+\eta}$ for any $\eta>0$, nor can $L^1$ integrability be replaced by weak $L^1$ integrability, while keeping the other hypotheses of the uniqueness theorem.

}
\end{prop}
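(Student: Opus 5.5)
Most of the regularity claims follow directly from the scaling formulas \eqref{eq:fpm}, \eqref{eq:Epm}, \eqref{eq:rhopm}; the only genuinely new ingredient is the computation of the limit in \eqref{eq:limit}.

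\emph{Weighted continuity.} For $f_\pm\in C([0,1];L^1_1(\R^2))$, I would change variables $x=t^b\xi$, $v=t^cw$ and obtain, as in \eqref{eq:mass} and \eqref{eq:moment}, bounds of the form $t^{b-2}$, $t^{2b-2}$ and $t^{2b-3}$ for the mass, first spatial moment and first velocity moment. This uses that $G$ and $H_{\eps,\theta}$ have finite first moments in $(y,w)$, by \eqref{eq:moments}, \eqref{eq:Omega moments} and \eqref{eq:pointwise}. Continuity on $(0,1]$ in the weighted norm follows from strong continuity of dilations on $L^1_1$, and since $b>3$ all three quantities tend to zero as $t\to0^+$, which gives $f_\pm(0)=0$. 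The statements for the source and the ion density are already in the proof of Theorem~\ref{thm:main}, since $\|\r_i(t)\|_{L^1}=t^{b-2}\|\rho_{\overline F}\|_{L^1}$.

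\emph{Field regularity.} From \eqref{eq:Epm} we have $\|E_\pm(t)\|_{L^\infty}\le Ct^{b-2+\lambda}$. Differentiating gives $\pt_xE_\pm=\pm\delta t^{\lambda-2}\rho_h(xt^{-b})$, and $\rho_h\in L^\infty$ by Step~6. Hence $\|\pt_xE_\pm(t)\|_{L^\infty}\le Ct^{\lambda-2}$, which is integrable on $(0,1)$ because $\lambda=\Lambda_0/\eps>b+1>2$ by \eqref{eq:parameters}. This gives $E_\pm\in L^1(0,1;W^{1,\infty})$.

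\emph{The endpoint rate.} Set $m:=\|\rho_{\overline F}\|_{L^\infty}$, which is positive since $\overline F>0$. From \eqref{eq:fpm}, $t^2\|\rho_i(t)\|_{L^\infty}=m$ exactly, because dilation preserves the sup norm. From \eqref{eq:rhopm},
\[
 t^2\|\rho_{f_\pm}(t)\|_{L^\infty}=\|\rho_{\overline F}\pm\delta t^\lambda\rho_h\|_{L^\infty},
\]
and this differs from $m$ by at most $\delta t^\lambda\|\rho_h\|_{L^\infty}$, which tends to $0$. The weighted quantities then behave like $m/t$ near zero:
\begin{itemize}
\item $m/t\in L^{1,\infty}(0,1)$, because $|\{t: m/t>\alpha\}|\le m/\alpha$;
\item $m/t\notin L^1(0,1)$;
\item $\int_0^1 t^{\eta-1}m\,\di t=m/\eta$.
\end{itemize}
For $\rho_{f_\pm}$ the correction $t^{1+\eta}\cdot Ct^{\lambda-2}$ is integrable, and the lower bound $t\|\rho_{f_\pm}\|_{L^\infty}\ge m/(2t)$ holds for small $t$, which gives non-integrability. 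Weak-$L^1$ membership follows because the function is bounded by $2m/t$ near zero and bounded away from zero.

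\emph{Conclusion and main obstacle.} The sharpness claim follows by combining the two facts. Every hypothesis of Theorem~\ref{thm:uniqueness} other than the density condition holds, yet $f_+\ne f_-$. A weaker weight $t^{1+\eta}$, or weak $L^1$ integrability, would be satisfied by these solutions, so neither can replace the original condition. The only subtle point is checking that $m>0$ is finite and that $\rho_h$ is bounded; both follow from Step~6.
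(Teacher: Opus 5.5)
Your moment bounds, the $L^\infty$ estimates on $E_\pm$ and $\pt_xE_\pm$ with $\lambda>b+1$, and the comparison $\bigl|t^2\|\rho_{f_\pm}(t)\|_{L^\infty}-m\bigr|\le\delta t^\lambda\|\rho_h\|_{L^\infty}$ with $m=\|\rho_{\overline F}\|_{L^\infty}$ are the same as in the paper. For the weak-$L^1$ bound it is cleaner to use $t\|\rho_{f_\pm}(t)\|_{L^\infty}\le C/t$ on all of $(0,1]$, which follows directly from \eqref{eq:rhopm}.

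The gap is in the field regularity. You conclude $E_\pm\in L^1(0,1;W^{1,\infty}(\R))$ from $\int_0^1\|E_\pm(t)\|_{W^{1,\infty}}\,\di t<\infty$ alone. In the Bochner sense you also need strong measurability of $t\mapsto E_\pm(t)$, which takes values in the non-separable space $W^{1,\infty}$. This does not follow from the scaling formula \eqref{eq:Epm} for a merely bounded profile, because dilations are not strongly continuous on $L^\infty$. For example, if $\phi=\mathbf{1}_{[0,1]}$, distinct dilates $\phi(\cdot\,t^{-b})$ are at sup-distance $1$, so $t\mapsto\phi(\cdot\,t^{-b})$ is not even essentially separably valued.

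This is not a formality. The proof of Lemma~\ref{lem:test}, and hence of Theorem~\ref{thm:uniqueness}, uses that time mollifications of the field converge in the Bochner space $L^1_tL^\infty_x$, and this can fail for fields that are only jointly measurable with integrable norms. Without this step, your final claim that every other hypothesis of the uniqueness theorem holds is not verified in the sense in which that theorem is proved.

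The missing ingredient is Remark~\ref{rem:continuity}: $\rho_h$ and $E[\rho_h]\in H^1(\R)$ are both continuous and vanish at infinity. Dilations of such functions are continuous in the sup norm. Hence \eqref{eq:Epm} and $\pt_xE_\pm=\pm\delta t^{\lambda-2}\rho_h(\cdot\,t^{-b})$ give continuity of $t\mapsto E_\pm(t)$ into $W^{1,\infty}(\R)$ on $(0,1]$. Your norm bounds then extend this continuously to $t=0$ with value $0$. This yields Bochner integrability and the convergence of mollifications needed in Lemma~\ref{lem:test}. So the delicate point is this continuity, not only the finiteness of $m$ and of $\|\rho_h\|_{L^\infty}$.
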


\begin{proof}
Recall the profiles $\overline F,h$ used in \eqref{eq:fpm}. The compact spatial support of $n_\eps$, the first velocity moment of $\mu_{u,\gamma}$ in \eqref{eq:moments}, and \eqref{eq:Omega moments}, together with the pointwise estimate \eqref{eq:pointwise}, imply
\[
 \iint_{\R^2}(1+|\xi|+|w|)\bigl(\overline F(\xi,w)+|h(\xi,w)|\bigr)
       \,\di\xi\,\di w<\infty,
\]
and the fixed spatial dilation by $\eps$ preserves the finiteness of these moments. Since $a=-b-1$ and $c=b-1$, changing variables in \eqref{eq:fpm} gives
\be\label{eq:f moments}
 \bal
 \|f_\pm(t)\|_{L^1}&\leq Ct^{b-2},\\
 \iint_{\R^2}|x|f_\pm(t,x,v)\,\di x\,\di v&\leq Ct^{2b-2},\\
 \iint_{\R^2}|v|f_\pm(t,x,v)\,\di x\,\di v&\leq Ct^{2b-3}.
 \eal
\ee
All three quantities tend to zero because $b>3$, and at positive times the dilations are strongly continuous in $L^1_1$, which gives the claimed continuity in time.

The source was already estimated in \eqref{eq:S physical}. For the ion density, \eqref{eq:fpm} gives
\[
 \|\rho_i(t)\|_{L^1}=t^{b-2}\|\rho_{\overline F}\|_{L^1},
\]
which is integrable in time. Also, the field formula \eqref{eq:Epm} and its derivative give
\be\label{eq:E norm}
 \|E_\pm(t)\|_{L^\infty}
 =\delta t^{b-2+\lambda}\|E[\rho_h]\|_{L^\infty},\qquad
 \|\pt_xE_\pm(t)\|_{L^\infty}
 =\delta t^{\lambda-2}\|\rho_h\|_{L^\infty}.
\ee
The profile norms are finite by Section~\ref{sec:proof}, and \eqref{eq:parameters} gives $\lambda=\Lambda_0/\eps>b+1$, therefore both powers in \eqref{eq:E norm} are integrable at zero.

{
By Remark~\ref{rem:continuity} and the fixed spatial dilation, $\rho_h$ is continuous and vanishes at infinity, as does $E[\rho_h]\in H^1(\R)$. Since dilations of such functions are continuous in the uniform norm, \eqref{eq:Epm} gives continuity of $t\mapsto E_\pm(t)$ into $W^{1,\infty}(\R)$ for $t>0$, which \eqref{eq:E norm} extends to $t=0$ with value zero.
Thus $E_\pm\in L^1(0,1;W^{1,\infty}(\R))$ in the Bochner sense; in particular, their time mollifications converge in $L^1_tL^\infty_x$, as used in Lemma~\ref{lem:test}.

}

Set $m=\|\rho_{\overline F}\|_{L^\infty}>0$. By \eqref{eq:rhopm},
\[
 t^2\|\rho_{f_\pm}(t)\|_{L^\infty}
 =\|\rho_{\overline F}\pm\delta t^\lambda\rho_h\|_{L^\infty},
 \qquad
 \left|t^2\|\rho_{f_\pm}(t)\|_{L^\infty}-m\right|
 \leq\delta t^\lambda\|\rho_h\|_{L^\infty},
\]
while the ion identity in \eqref{eq:limit} follows from \eqref{eq:fpm}.
Thus there are constants $C_1,C_2>0$ and $t_0>0$ such that
\[
 \frac{C_1}{t}\leq t\|\rho_{f_\pm}(t)\|_{L^\infty}\leq\frac{C_2}{t}
 \quad(0<t<t_0),
 \qquad t\|\rho_i(t)\|_{L^\infty}=\frac{m}{t},
\]
which shows that the integrals in \eqref{eq:uniqueness} and \eqref{eq:ion criterion} diverge logarithmically. On the other hand, \eqref{eq:rhopm} gives $\|\rho_{f_\pm}(t)\|_{L^\infty}\leq C t^{-2}$ on the whole interval $(0,1]$,
{thus the superlevel sets of $t\|\rho_{f_\pm}(t)\|_{L^\infty}$ have measure at most $\min\{1,C/\alpha\}$ at level $\alpha>0$, which proves the weak $L^1$ bound; the same argument applies to $t\|\rho_i(t)\|_{L^\infty}=m/t$.}

Finally, for every $\eta>0$,
\[
 \int_0^1 t^{1+\eta}\|\rho_{f_\pm}(t)\|_{L^\infty}\,\di t
 \leq C\int_0^1t^{\eta-1}\,\di t<\infty,
 \qquad
 \int_0^1 t^{1+\eta}\|\rho_i(t)\|_{L^\infty}\,\di t=\frac{m}{\eta},
\]
which concludes the proof.
\end{proof}

\section*{Acknowledgments}
Part of this work was carried out while the author was a von Neumann Fellow at the Institute for Advanced Study during the 2024-2025 academic year.
The author acknowledges the support of the National Science Foundation through the grant No.~DMS-1926686.
This work was also supported by the Swiss National Science Foundation through the SNSF Starting Grant \emph{Challenges and Breakthroughs in the Mathematics of Plasmas}, grant no.~226018 (project TMSGI2\_226018). The author is grateful to the many colleagues with whom she has discussed the non-uniqueness problem for kinetic equations over the years.

\end{document}